\newtheorem{theorem}{Theorem}[section]
\newtheorem{lemma}[theorem]{Lemma}
\newtheorem{proposition}[theorem]{Proposition}
\newtheorem{corollary}[theorem]{Corollary}
\theoremstyle{definition}
\theoremstyle{remark}
\newtheorem*{remark}{Remark}
\newtheorem*{note*}{Note}
\numberwithin{equation}{section}
\newcommand{\ls}{\leqslant}
\newcommand{\gr}{\geqslant}
\newcommand{\R}{\mathbb R}
\newcommand{\N}{\mathbb N}
\newcommand{\eps}{\varepsilon}
\newcommand{\EE}{\mathbb{E}}
\newcommand{\wtd}[1]{\widetilde{#1}}
\newcommand{\abs}[1]{\left\lvert#1\right\rvert}
\newcommand{\norm}[1]{\left\lVert#1\right\rVert}
\newcommand{\conv}{{\rm conv}}
\begin{document}

\title{Randomized isoperimetric inequalities}

\author{Grigoris Paouris\thanks{Grigoris Paouris is supported by US
    NSF grant CAREER-1151711 and BSF grant 2010288. } \and Peter
  Pivovarov\thanks{ This work was partially supported by a grant from
    the Simons Foundation (\#317733 to Peter Pivovarov).}}

\maketitle \abstract{
  We discuss isoperimetric inequalities for convex sets.  These
  include the classical isoperimetric inequality and that of
  Brunn-Minkowski, Blaschke-Santal\'{o}, Busemann-Petty and their
  various extensions.  We show that many such inequalities admit
  stronger randomized forms in the following sense: for natural
  families of associated random convex sets one has stochastic
  dominance for various functionals such as volume, surface area, mean
  width and others. By laws of large numbers, these randomized
  versions recover the classical inequalities.  We give an overview of
  when such stochastic dominance arises and its applications in convex
  geometry and probability.}

\section{Introduction}

The focus of this paper is stochastic forms of isoperimetric
inequalities for convex sets. To set the stage, we begin with two
examples.  Among the most fundamental isoperimetric inequalities is
the Brunn-Minkowski inequality for the volume $V_n$ of convex bodies
$K,L\subseteq \R^n$,
\begin{equation}
  \label{eqn:BM}
  V_n(K+L)^{1/n} \gr V_n(K)^{1/n} +V_n(L)^{1/n},
\end{equation}
where $K+L$ is the Minkowski sum $\{x+y:x\in K, y\in L\}$.  The
Brunn-Minkowski inequality is the cornerstone of the Brunn-Minkowski
theory and its reach extends well beyond convex geometry; see
Schneider's monograph \cite{Schneider_book_ed2} and Gardner's survey
\cite{Gardner_survey}. It is well-known that \eqref{eqn:BM} provides a
direct route to the classical isoperimetric inequality relating
surface area $S$ and volume,
\begin{equation}
  \label{eqn:iso}
  \left(\frac{S(K)}{S(B)}\right)^{1/(n-1)}\gr
  \left(\frac{V_n(K)}{V_n(B)}\right)^{1/n},
\end{equation}
where $B$ is the Euclidean unit ball.  As equality holds in
\eqref{eqn:BM} if $K$ and $L$ are homothetic, it can be equivalently
stated in isoperimetric form as follows:
\begin{equation}
  \label{eqn:BM_norm}
  V_n(K+L) \gr V_n(r_KB+ r_LB),
\end{equation}
where $r_K, r_L$ denote the radii of Euclidean balls with the same
volume as $K, L$, respectively, i.e., $r_K = (V_n(K)/V_n(B))^{1/n}$; for
subsequent reference, with this notation, \eqref{eqn:iso} reads
\begin{equation}
\label{eqn:iso_norm}
 S(K)\gr S(r_KB).
\end{equation}

Both \eqref{eqn:BM} and \eqref{eqn:iso} admit stronger empirical
versions associated with random convex sets. Specifically, let
$x_1,\ldots,x_N$ be independent random vectors (on some probability
space $(\Omega, \mathcal{F},\mathbb{P})$) distributed according to the
uniform density on a convex body $K\subseteq \R^n$, say,
$f_K=\frac{1}{V_n(K)}\mathds{1}_K$, i.e., $\mathbb{P}(x_i\in A) =
\int_{A} f_K(x)dx$ for Borel sets $A\subseteq \R^n$. For each such $K$
and $N>n$, we associate a random polytope
\begin{equation*}K_N =
  \conv\{x_1,\ldots,x_N\},
\end{equation*}
where $\conv$ denotes convex hull. Then the following stochastic
dominance holds for the random polytopes $K_{N_1}$, $L_{N_2}$ and
$(r_K B)_{N_1}$, $(r_LB)_{N_2}$ associated with the bodies in
\eqref{eqn:BM_norm}: for all $\alpha\gr 0$,
\begin{equation}
  \label{eqn:BM_sd}
  \mathbb{P}\left(V_n(K_{N_1} +L_{N_2}) >\alpha\right) \gr
  \mathbb{P}\left(V_n((r_KB)_{N_1}+(r_LB)_{N_2}) >\alpha\right).
\end{equation}
Integrating in $\alpha$ gives 
\begin{equation*}
  \EE V_n(K_{N_1} +L_{N_2}) \gr \EE V_n((r_KB)_{N_1} + (r_LB)_{N_2}),
\end{equation*}
where $\EE$ denotes expectation. By the law of large numbers, when
$N_1, N_2\rightarrow \infty$, the latter convex hulls converge to
their ambient bodies and this leads to \eqref{eqn:BM_norm}. Thus
\eqref{eqn:BM} is a global inequality which can be proved by a random
approximation procedure in which stochastic dominance holds at each
stage; for a different stochastic form of \eqref{eqn:BM}, see Vitale's
work \cite{Vitale}. For the classical isoperimetric inequality, one
has the following distributional inequality, for $\alpha \gr 0$,
\begin{equation}
  \label{eqn:iso_sd}
  \mathbb{P}\left(S(K_{N_1})>\alpha\right) \gr
  \mathbb{P}\left(S((r_KB)_{N_1})>\alpha\right).
\end{equation} 
The same integration and limiting procedure lead to
\eqref{eqn:iso_norm}. For fixed $N_1$ and $N_2$, the sets in the
extremizing probabilities on the right-hand sides of \eqref{eqn:BM_sd}
and \eqref{eqn:iso_sd} are not Euclidean balls, but rather sets that
one generates using Euclidean balls. In particular, the stochastic
forms are strictly stronger than the global inequalities
\eqref{eqn:BM} and \eqref{eqn:iso}.

The goal of this paper is to give an overview of related stochastic
forms of isoperimetric inequalities. Both \eqref{eqn:BM} and
\eqref{eqn:iso} hold for non-convex sets but we focus on stochastic
dominance associated with convex sets. The underlying randomness,
however, will not be limited to uniform distributions on convex bodies
but will involve continuous distributions on $\R^n$.  We will discuss
a streamlined approach that yields stochastic dominance in a variety
of inequalities in convex geometry and their applications.  We pay
particular attention to high-dimensional probability distributions and
associated structures, e.g., random convex sets and matrices. Many of
the results we discuss are from a series of papers
\cite{PaoPiv_probtake}, \cite{PaoPiv_smallball}, along with
D. Cordero-Erausquin, M. Fradelizi \cite{CEFPP}, S. Dann \cite{DPP}
and G. Livshyts \cite{LPP}. We also present a few new results that fit
in this framework and have not appeared previously.


Inequalities for the volume of random convex hulls in stochastic
geometry have a rich history starting with Blaschke's resolution of
Sylvester's famous four-point problem in the plane (see, e.g.,
\cite{Pfiefer_Sylvester}, \cite{CCG_Sylvester}, \cite{CG},
\cite{Gardner_book} for background and history). In particular, for
planar convex bodies Blaschke proved that the random triangle $K_{3}$
(notation as above) satisfies
\begin{equation} 
  \label{eqn:Blaschke}
\EE V_2(\Delta_3)\gr \EE V_2(K_{3}) \gr \EE V_2((r_KB_2)_{3}),
\end{equation}
where $\Delta$ is a triangle in $\R^2$ with the same area as $K$ and
$B_2$ is the unit disk. Blaschke's proof of the lower bound draws on
Steiner symmetrization, which is the basis for many related extremal
inequalities, see, e.g,. \cite{Schneider_book_ed2},
\cite{Gardner_book}, \cite{Gruber}. More generally, shadow systems as
put forth by Rogers and Shephard \cite{Shephard}, \cite{RS} and
developed by Campi and Gronchi, among others, play a fundamental role,
e.g., \cite{CCG_Sylvester}, \cite{CG_volume_product},
\cite{CG_survey}, and will be defined and discussed further below.
Finding maximizers in \eqref{eqn:Blaschke} for $n\gr 3$ has proved
more difficult and is connected to the famous slicing problem, which
we will not discuss here (see \cite{BGVV} for background).

A seminal result building on the lower bound in \eqref{eqn:Blaschke}
is Busemann's random simplex inequality \cite{Busemann},
\cite{BusemannStraus}: for a convex body $K\subseteq\R^n$ and $p\gr
1$, the set $K_{o,n} =\conv\{o,x_1,\ldots,x_n\}$ ($x_i$'s as above)
satisfies
\begin{equation}
  \label{eqn:Busemann}
  \EE V_n(K_{o,n})^p \gr \EE V_n((r_KB)_{o,n})^p.
\end{equation}
This is a key ingredient in Busemann's intersection inequality,
\begin{equation}
  \label{eqn:Busemann_int}
  \int_{S^{n-1}} V_{n-1}(K\cap \theta^{\perp})^{n}d\sigma(\theta)
  \leq \int_{S^{n-1}} V_{n-1}((r_KB)\cap
  \theta^{\perp})^{n}d\sigma(\theta),
\end{equation}
where $S^{n-1}$ is the unit sphere equipped with the Haar probability
measure $\sigma$; \eqref{eqn:Busemann} is also the basis for extending
\eqref{eqn:Busemann_int} to lower dimensional secitons as proved by
Busemann and Straus \cite{BusemannStraus} and Grinberg
\cite{Grinberg}; see also Gardner \cite{Gardner_dual} for further
extensions.

Inextricably linked to Busemann's random simplex inequality is the
Busemann-Petty centroid inequality, proved by Petty \cite{Petty}.  The
centroid body of a star body $K\subseteq \R^n$ is the convex body
$Z(K)$ with support function given by
\begin{equation*}
  h(Z(K), y) = \frac{1}{V_n(K)}\int_K \abs{\langle x, y \rangle}dx;
\end{equation*}
(star bodies and support functions are defined in \S
\ref{section:prelim}) and it satisfies
\begin{equation*}
V_n(Z(K))\gr V_n(Z(r_KB)).
\end{equation*}
The latter occupies a special role in the theory of affine
isoperimetric inequalities; see Lutwak's survey
\cite{Lutwak_survey}. 

One can view \eqref{eqn:Busemann} as a result about convex hulls or
about the random parallelotope $\sum_{i=1}^n[-x_i,x_i]$ (since
$n!V_n(K_{o,n}) = \abs{\det[x_1,\ldots,x_n]}$). Both viewpoints
generalize: for convex hulls $K_N$ with $N>n$, this was done by
Groemer \cite{Groemer} and for Minkowski sums of $N\gr n$ random line
segments by Bourgain, Meyer, Milman and Pajor \cite{BMMP}; these are
stated in \S \ref{section:iso}, where we discuss various extensions
for different functionals and underlying randomness. These are the
starting point for a systematic study of many related quantities.

In particular, convex hulls and zonotopes are natural endpoint
families of sets in $L_p$-Brunn-Minkowski theory and its recent
extensions. In the last twenty years, this area has seen significant
developments. $L_p$ analogues of centroid bodies are important for
affine isoperimetric inequalities, e.g., \cite{LYZ_Lp},
\cite{LYZ_Sobolev}, \cite{HabSch} and are fundamental in concentration
of volume in convex bodies, e.g,. \cite{KM_unified}, \cite{LW_inf}.
The $L_p$-version of the Busemann-Petty centroid inequality, due to
Lutwak, Yang and Zhang \cite{LYZ_Lp}, concerns the convex body
$Z_p(K)$ defined by its support function
\begin{equation}
  \label{eqn:Zp}
  h^p(Z_p(K), y) = \frac{1}{V_n(K)}\int_{K}\abs{\langle x, y
    \rangle}^p dx
\end{equation} and states that
\begin{equation}
  \label{eqn:LYZ_Lp}
  V_n(Z_p(K)) \gr V_n(Z_p(r_K B)).
\end{equation}
A precursor to \eqref{eqn:LYZ_Lp} is due to Lutwak and Zhang \cite{LZ}
who proved that when $K$ is origin-symmetric,
\begin{equation}
  \label{eqn:LZ_polar}
  V_n(Z_p(K)^{\circ}) \ls V_n(Z_p(r_KB)^{\circ}).
\end{equation}
When $p\rightarrow \infty$, $Z_p(K)$ converges to $Z_{\infty}(K) = K$
and \eqref{eqn:LZ_polar} recovers the classical Blaschke-Santal\'{o}
inequality \cite{Santalo},
\begin{equation}
  \label{eqn:BS}
  V_n(K^{\circ}) \ls V_n((r_KB)^{\circ}).
\end{equation} 
The latter holds more generally for non-symmetric bodies with an
appropriate choice of center.  The analogue of \eqref{eqn:LZ_polar} in
the non-symmetric case was proved by Haberl and Schuster
\cite{HabSch}, to which we refer for further references and background
on $L_p$-Brunn-Minkowski theory.

Inequalities \eqref{eqn:LYZ_Lp} and \eqref{eqn:LZ_polar} are
fundamental inequalities in the $L_p$ Brunn-Minkowski
theory. Recently, such inequalities have been placed in a general
framework involving Orlicz functions by Lutwak, Yang, and Zhang, e.g.,
\cite{LYZ_Orlicz}, \cite{LYZ_Orlicz_projection} and a closely related
concept, due to Gardner, Hug and Weil \cite{GHW_JEMS},
\cite{GHW_Orlicz}, termed $M$-addition, which we discuss in \S
\ref{section:iso}; for further extensions and background, see
\cite{BLYZ}.  We treat stochastic forms of fundamental related
inequalities.  For example, we show that in \eqref{eqn:BM_sd} one can
replace Minkowski addition by $M$-addition.  With the help of laws of
large numbers, this leads to a streamlined approach to all of the
above inequalities and others.

The notion of $M$-addition fits perfectly with the random linear
operator point of view which we have used in our work on this topic
\cite{PaoPiv_probtake}, \cite{PaoPiv_smallball}.  For random vectors
$x_1,\ldots,x_N$, we form the $n\times N$ random matrix
$[x_1,\ldots,x_N]$ and view it as a linear operator from $\R^N$ to
$\R^n$.  If $C\subseteq \R^N$, then
\begin{equation*}
  [x_1,\ldots,x_N] C = \left\{\sum_{i=1}^N c_i x_i: c=(c_i)\in C
  \right\}
\end{equation*}
is a random set in $\R^n$. In particular, if $C = \conv\{e_1,\ldots,
e_N\}$, where $e_1,\ldots,e_N$ is the standard unit vector basis for
$\R^N$, then
\begin{equation*}
  [x_1,\ldots,x_N]\conv\{e_1,\ldots,e_N\} = \conv\{x_1,\ldots,x_N\}.
\end{equation*}Let $B_p^N$ denote the closed unit ball in $\ell_p^N$. 
If $C=B_1^N$, then
\begin{equation*}
  [x_1,\ldots,x_N]B_1^N = \conv\{\pm x_1,\ldots,\pm x_N\}.
\end{equation*}
If $C=B_{\infty}^N$, then one obtains Minkowski sums,
\begin{equation*}
  [x_1,\ldots,x_N] B_{\infty}^N = \sum_{i=1}^N [-x_i,x_i].
\end{equation*}
We define the empirical analogue $Z_{p,N}(K)$ of the $L_p$-centroid
body $Z_p(K)$ by its (random) support function
\begin{equation}
  \label{eqn:Zp_empirical}
  h^p(Z_{p,N}(K), y) = \frac{1}{N} \sum_{i=1}^N \abs{\langle x_i, y
    \rangle}^p,
\end{equation}
where $x_1,\ldots,x_N$ are independent random vectors with density
$\frac{1}{V_n(K)} \mathds{1}_K$; this can be compared with
\eqref{eqn:Zp}; in matrix form, $Z_{p,N}(K) =
N^{-1/p}[x_1,\ldots,x_N]B_q^N$, where $1/p+1/q=1$.  In this framework,
we will explain how uniform measures on Cartesian products of
Euclidean balls arise as extremizers for
\begin{equation}
  \label{eqn:phi_C}
  \mathbb{P}(\phi([X_1,\ldots,X_N]C)> \alpha)
\end{equation} and 
\begin{equation}
  \label{eqn:phi_C_polar}
  \mathbb{P}(\phi(([X_1,\ldots,X_N]C)^{\circ})> \alpha);
\end{equation} 
 over the class of independent random vectors $X_i$ with continuous
 distributions on $\R^n$ having bounded densities; here $C\subseteq
 \R^N$ is a compact convex set (sometimes with some additional
 symmetry assumptions) and $\phi$ an appropriate functional, e.g.,
 volume, surface area, mean width, diameter, among others. Since the
 random sets in the extremizing probabilities are not typically balls
 but sets one generates using balls, there is no clear cut path to
 reduce distributional inequalities for \eqref{eqn:phi_C} and
 \eqref{eqn:phi_C_polar} from one another via duality; for comparison,
 note that the Lutwak-Yang-Zhang inequality for $L_p$ centroid bodies
 \eqref{eqn:LYZ_Lp} implies the Lutwak-Zhang result for their polars
 \eqref{eqn:LZ_polar} by the Blaschke-Santal\'{o} inequality since the
 extremizers in each case are balls (or ellipsoids).

The random operator approach allows one to interpolate between
inequalities for families of convex sets, but such inequalities in
turn yield information about random operators. For example, recall the
classical Bieberbach inequality on the diameter of a convex body
$K\subseteq\R^n$,
\begin{equation}
  \label{eqn:Bieberbach}
  \mathop{\rm diam}(K) \gr \mathop{\rm diam}(r_KB).
\end{equation}A corresponding empirical form is given by 
\begin{equation}
  \label{eqn:Bieberbach_sd}
  \mathbb{P}(\mathop{\rm diam}(K_N) >\alpha)\gr \mathbb{P}(\mathop{\rm
    diam}((r_KB)_N) >\alpha).
\end{equation}
The latter identifies the extremizers of the distribution of certain
operator norms.  Indeed, if $K$ is an origin-symmetric convex body
and we set $K_{N,s} = \conv\{\pm x_1,\ldots,\pm x_N\}$ ($x_i\in \R^n$)
then \eqref{eqn:Bieberbach_sd} still holds and we have the following
for the $\ell_1^N\rightarrow \ell_2^n$ operator norm,
\begin{eqnarray*}
  \mathop{\rm diam}(K_{N,s}) = 2 \norm{[x_1,\ldots,x_N]:\ell_1^N
    \rightarrow \ell_2^n}.
\end{eqnarray*}
We show in \S \ref{section:apps} that if ${\bf X}=[X_1,\ldots,X_N]$,
where the $X_i$'s are independent random vectors in $\R^n$ and have
densities bounded by one, say, then for any $N$-dimensional normed
space $E$, the quantity
\begin{equation*}
  \mathbb{P}\left(\norm{[X_1,\ldots,X_N]:E \rightarrow \ell_2^n} >
  \alpha \right)
\end{equation*}
is minimized when the columns $X_i$ are distributed uniformly in the
Euclidean ball $\wtd{B}$ of volume one, centered at the origin. This
can be viewed as an operator analogue of the Bieberbach inequality
\eqref{eqn:Bieberbach}. When $n=1$, ${\bf X}$ is simply a $1\times N$
row vector and the latter extends to semi-norms. Thus if $F$ is a
subspace of $\R^n$, we get the following for random vectors $x\in
\R^N$ with independent coordinates with densities bounded by one: the
probability
\begin{equation}
  \label{eqn:smallball_norm}
\mathbb{P}(\norm{P_{F}x}_2 >\alpha)
\end{equation} 
is minimized when $x$ is sampled in the unit cube $[-1/2,1/2]^N$ -
products of ``balls'' in one dimension (here $\norm{\cdot}_2$ is the
Euclidean norm and $P_F$ is the orthogonal projection onto
$F$). Combining \eqref{eqn:smallball_norm} with a seminal result by
Ball \cite{Ball_GAFA} on maximal volume sections of the cube, we
obtain a new proof of a result of Rudelson and Vershynin
\cite{RV_IMRN} on small ball probabilities of marginal densities of
product measures (which differs also from the proof in \cite{LPP}, our
joint work G. Livshyts); this is explained in \S \ref{section:apps}.

As mentioned above, Busemann's original motivation for proving the
random simplex inequality \eqref{eqn:Busemann} was to bound suitable
averages of volumes of central hyperplane sections of convex bodies
\eqref{eqn:Busemann_int}.  If $V_n(K)=1$ and $\theta\in S^{n-1}$ then
$V_{n-1}(K\cap \theta^{\perp})$ is the value of the marginal density
of $\mathds{1}_K$ on $[\theta]=\mathop{\rm span}\{\theta\}$ evaluated
at $0$, i.e.
$\pi_{[\theta]}(\mathds{1}_K)(0)=\int_{\theta^{\perp}}\mathds{1}_K(x)dx$.
Thus it is natural that marginal distributions of probability measures
arise in this setting. One reason for placing Busemann-type
inequalities in a probabilistic framework is that they lead to bounds
for marginal distributions of random vectors not necessarily having
independent coordinates, as in our joint work with S. Dann \cite{DPP},
which we discuss further in \S \ref{section:iso}.

Lastly, we comment on some of the tools used to prove such
inequalities.  We make essential use of rearrangement inequalities
such as that of Rogers \cite{Rogers_single}, Brascamp, Lieb and
Luttinger \cite{BLL} and Christ \cite{Christ_kplane}. These interface
particularly well with Steiner symmetrization, shadow systems and
other machinery from convex geometry.  Another key ingredient is an
inequality of Kanter \cite{Kanter} on stochastic dominance.  In fact,
we formulate the Rogers/Brascamp-Lieb-Luttinger inequality in terms of
stochastic dominance using the notion of peaked measures as studied by
Kanter \cite{Kanter} and Barthe \cite{Barthe_unimodal},
\cite{Barthe_central}, among others.  One can actually prove
\eqref{eqn:smallball_norm} directly using the
Rogers/Brascamp-Lieb-Luttinger inequality and Kanter's theorem but we
will show how these ingredients apply in a general framework for a
variety of functionals.  Similar techniques are used in proving
analytic inequalities, e.g., for $k$-plane transform by Christ
\cite{Christ_kplane} and Baernstein and Loss \cite{BaerLoss}. Our
focus is on phenomena in convex geometry and probability.

The paper is organized as follows. We start with definitions and
background in \S \ref{section:prelim}.  In \S \ref{section:dominance},
we discuss the rearrangement inequality of
Rogers/Brascamp-Lieb-Luttinger and interpret it as a result about
stochastic dominance for certain types of functions with a concavity
property, called Steiner concavity, following Christ.  In \S
\ref{section:Steiner}, we present examples of Steiner concave
functions.  In \S \ref{section:iso}, we present general randomized
inequalities. We conclude with applications to operator norms of
random matrices and small deviations in \S \ref{section:apps}.

\section{Preliminaries}
\label{section:prelim}

We work in Euclidean space $\R^n$ with the canonical inner-product
$\langle \cdot, \cdot \rangle$ and Euclidean norm $\norm{\cdot}_2$. As
above, the unit Euclidean ball in $\R^n$ is $B=B_2^n$ and its volume
is $\omega_n:=V_n(B_2^n)$; $S^{n-1}$ is the unit sphere, equipped
with the Haar probability measure $\sigma$. Let $G_{n,k}$ be the
Grassmannian manifold of $k$-dimensional linear subspaces of $\R^n$
equipped with the Haar probability measure $\nu_{n,k}$.

A convex body $K\subseteq \R^n$ is a compact, convex set with
non-empty interior.  The set of all compact convex sets in $\R^n$ is
denoted by $\mathcal{K}^n$.  For a convex body $K$ we write $\wtd{K}$
for the homothet of $K$ of volume one; in particular, $\wtd{B} =
\omega_n^{-1/n}B$.  Let $\mathcal{K}^n_{\circ}$ denote the class of
all convex bodies that contain the origin in their interior.  For
$K,L\in \mathcal{K}^n$, the Minkowski sum $K+L$ is the set $\{x+y:x\in
K, y\in L\}$; for $\alpha>0$, $\alpha K = \{\alpha x:x\in K\}$.  We
say that $K$ is origin-symmetric (or simply 'symmetric'), if $-x\in K$
whenever $x\in K$.  For $K\in \mathcal{K}^n$, the support function of
$K$ is given by
\begin{equation*} 
  h_K(x) =\sup\{\langle y, x\rangle\; : \ y\in K\} \quad (x\in \R^n).
\end{equation*}
The mean width of $K$ is
\begin{eqnarray*}
  w(K)= \int_{S^{n-1}}h_K(\theta)+ h_K(-\theta)d\sigma(\theta) 
  = 2\int_{S^{n-1}}h_K(\theta)d\sigma(\theta).
\end{eqnarray*}

Recall that the intrinsic volumes $V_1,\ldots,
V_n$ are functionals on convex bodies which can be defined via the
Steiner formula: for any convex body $K\subseteq \R^n$ and $\eps>0$,
\begin{equation*}
  V_n(K+ \eps B) = \sum_{j=0}^n \omega_{n-j} V_{j}(K){\eps}^{n-j};
\end{equation*} 
here $V_0 \equiv 1$, $V_1$ is a multiple of the
mean width, $2V_{n-1}$ is the surface area and $V_n$ is the volume;
see \cite{Schneider_book_ed2}.

For compact sets $C_1, C_2$ in $\R^n$, we let $\delta^{H}(C_1,C_2)$
denote the Hausdorff distance:
\begin{eqnarray*}
  \delta^{H}(C_1,C_2) &= &\inf\{\eps>0\; : \ C_1\subseteq C_2+\eps B,
  C_2\subseteq C_1+\eps B\}\\
  &= &\sup_{\theta \in S^{n-1}}\abs{h_K(\theta)-h_L(\theta)}.
\end{eqnarray*}  

A set $K\subseteq\R^n$ is star-shaped if it is compact, contains the
origin in its interior and for every $x\in K$ and $\lambda\in[0,1]$ we
have $\lambda x\in K$. We call $K$ a star-body if its radial function
\begin{equation*}
\rho_K(\theta) =\sup\{t>0 : t\theta\in K\} \quad (\theta \in S^{n-1})
\end{equation*}
is positive and continuous. Any positive continuous function
$f:S^{n-1}\rightarrow \R$ determines a star body with radial function
$f$.


Following Borell \cite{Borell_1}, \cite{Borell_2}, we say that a
non-negative, non-identically zero, function $\psi$ is
$\gamma$-concave if: (i) for $\gamma>0$, $\phi^{\gamma}$ is concave on
$\{ \psi>0\}$, (ii) for $\gamma=0$, $\log{\psi}$ is concave on
$\{\psi>0\}$; (iii) for $\gamma <0$, $\psi^{\gamma}$ is convex on $\{
\psi>0\}$. Let $ s\in [-\infty, 1]$.  A Borel measure $\mu$ on
$\mathbb R^{n}$ is called $s$-concave if
$$ \mu \left ( (1-\lambda) A + \lambda B\right) \gr \left(
 (1-\lambda) \mu(A)^{s} + \lambda \mu(B)^{s} \right)^{\frac{1}{s}} $$
 for all compact sets $A, B\subseteq \mathbb R^{n}$ such that $\mu(A)
 \mu(B) >0$. For $s=0$, one says that $\mu$ is $\log$-concave and the
 inequality reads as
$$ \mu \left ( (1-\lambda) A + \lambda B\right) \gr
 \mu(A)^{1-\lambda} \mu(B)^{\lambda} . $$ Also, for $s=-\infty$, the
 measure is called convex and the inequality is replaced by
$$ \mu \left ( (1-\lambda) A + \lambda B\right) \gr  \min\{ \mu(A),\mu(B)\}. $$

An $s$-concave measure $\mu$ is always supported on some convex subset
of an affine subspace $E$ where it has a density. If $\mu$ is a
measure on $\mathbb R^{n}$ absolutely continuous with respect to
Lebesgue measure with density $\psi$, then it is $s$-concave if and
only if its density $\psi$ is $\gamma$-concave with $\gamma= \frac{s}{
  1-s n } $ (see \cite{Borell_1}, \cite{Borell_2}).


Let $A$ be a Borel subset of $\mathbb R^{n}$ with finite Lebesgue
measure. The symmetric rearrangement $A^{\ast}$ of $A$ is the open
ball with center at the origin, whose volume is equal to the measure
of $A$. Since we choose $A^{\ast}$ to be open, ${\bf 1}_{A}^{\ast}$ is
lower semicontinuous. The symmetric decreasing rearrangement of ${\bf
  1}_{A}$ is defined by ${\bf 1}^{\ast}_{A} = {\bf 1}_{A^{\ast}}$. We
consider Borel measurable functions $f:\R^{n}\rightarrow \R_{+}$ which
satisfy the following condition: for every $t>0$, the set $\{ x\in
\R^{n}: f(x) >t\}$ has finite Lebesgue measure. In this case, we say
that $f$ vanishes at infinity. For such $f$, the symmetric decreasing
rearrangement $f^{\ast}$ is defined by
$$f^{\ast} (x) = \int_{0}^{\infty} \mathds{1}_{\{f>t\}^{\ast}} (x) d t =
  \int_{0}^{\infty} \mathds{1}_{\{f>t\}^{\ast}} (x) d t . $$ The latter
    should be compared with the ``layer-cake representation'' of $f$:
\begin{equation}
  \label{eqn:layer_cake}
  f(x)=\int_0^{\infty }1_{\{ f> t\}}(x)dt.
\end{equation}
see \cite[Theorem 1.13]{LL_book}.  Note that the function $f^{\ast}$
is radially-symmetric, radially decreasing and equimeasurable with
$f$, i.e., $\{ f>a\}$ and $ \{ f^{\ast}>a\}$ have the same volume for
each $a>0$. By equimeasurability one has that $ \|f\|_{p} =
\|f^{\ast}\|_{p}$ for each $1\ls p \ls \infty$, where $ \|\cdot
\|_{p}$ denote the $L_{p} (\R^{n})$-norm.
 
Let $f:\R^{n}\rightarrow \R_{+}$ be a measurable function vanishing at
infinity. For $\theta \in S^{n-1}$, we fix a coordinate system that
$e_{1}:= \theta$. The Steiner symmetral $f(\cdot | \theta ) $ of $f$
with respect to $\theta^{\perp}:= \{ y\in \R^{n}: \langle y, \theta
\rangle =0\}$ is defined as follows: for $z:= (x_{2}, \ldots
,x_{n})\in \theta^{\perp}$, we set $f_{z, \theta}(t) = f( t, x_{2},
\ldots , x_{n})$ and define $ f^{\ast}(t, x_{2}, \ldots ,
x_{n}|\theta) := (f_{z,\theta})^{\ast}(t).$ In other words, we obtain
$f^{\ast}(\cdot|\theta)$ by rearranging $f$ along every line parallel
to $\theta$. We will use the following fact, proved in [4]: if
$g:\R^{n}\rightarrow \R_{+}$ is an integrable function with compact
support, there exists a sequence of functions $g_{k}$, where $g_{0}=g$
and $g_{k+1}= g_{k}^{\ast}( \cdot |\theta_{k})$, for some
$\theta_{k}\in S^{n-1}$, such that $\lim_{k\rightarrow \infty} \|g_{k}
- g^{\ast} \|_{1} = 0$.  We refer the reader to the books
\cite{LL_book}, \cite{Simon} or the introductory notes
\cite{Burchard_notes} for further background material on
rearrangement of functions.

\section{Inequalities for stochastic dominance}

\label{section:dominance}

We start with a seminal inequality now known as the
Rogers/Brascamp-Lieb-Luttinger inequality.  It was observed by Madiman
and Wang in \cite{WangMadiman} that Rogers proved the inequality in
\cite{Rogers_single} but it is widely known as the
Brascamp-Lieb-Luttinger inequality \cite{BLL}.  We will state it only
for integrable functions since this is the focus in our paper.

\begin{theorem}
\label{RBLL-th}
 Let $f_{1}, \ldots , f_{M}$ be non-negative integrable functions on
 $\R$ and $u_{1}, \ldots , u_{M} \in \R^N$. Then
\begin{equation}
\label{R-BLL}
\int_{\R^{N}} \prod_{i=1}^{M} f_{i} ( \langle x, u_{i} \rangle
) d x \ls \int_{\R^{N} } \prod_{i=1}^{M} f_{i}^{\ast} (
\langle x, u_{i} \rangle ) dx .
\end{equation} 
\end{theorem}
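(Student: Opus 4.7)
My plan proceeds in two stages: a layer-cake reduction from arbitrary nonnegative integrable $f_i$ to indicator functions, followed by an iterated Steiner symmetrization argument on the resulting set inequality.

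\emph{Stage 1: Reduction to indicators.} Using \eqref{eqn:layer_cake}, write $f_i(t) = \int_0^\infty \mathbf{1}_{\{f_i>s_i\}}(t)\,ds_i$. Substituting into the product and applying Fubini,
\begin{equation*}
\int_{\R^N}\prod_{i=1}^M f_i(\langle x, u_i\rangle)\,dx = \int_{\R_+^M}\mathcal{L}_N\!\left(\bigcap_{i=1}^M \{x \in \R^N : \langle x, u_i\rangle \in A_i(s_i)\}\right)ds,
\end{equation*}
with $A_i(s_i) := \{f_i>s_i\}$. Since $\mathbf{1}_A^\ast = \mathbf{1}_{A^\ast}$ and super-level sets commute with the symmetric decreasing rearrangement, it suffices to prove the theorem for $f_i = \mathbf{1}_{A_i}$ with $A_i \subset \R$ of finite measure; equivalently,
\begin{equation*}
\mathcal{L}_N\!\left(\bigcap_i \{\langle\cdot, u_i\rangle \in A_i\}\right) \leq \mathcal{L}_N\!\left(\bigcap_i \{\langle\cdot, u_i\rangle \in A_i^\ast\}\right),
\end{equation*}
where $A_i^\ast$ is the centered open interval with $|A_i^\ast| = |A_i|$.

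\emph{Stage 2: Set inequality via iterated Steiner symmetrization.} Consider the indicator $F(x) := \prod_i \mathbf{1}_{A_i}(\langle x, u_i\rangle)$ and, for $\theta \in S^{N-1}$, its Steiner symmetrization $F^\ast(\cdot|\theta)$ in direction $\theta$, which preserves $\int F$. On each line $\{z + t\theta : t \in \R\}$ with $z \in \theta^\perp$, the slice function $t \mapsto F(z + t\theta) = \prod_i \mathbf{1}_{A_i}(\langle z, u_i\rangle + t\langle \theta, u_i\rangle)$ is an intersection of $M$ affine translates of scaled copies of the $A_i$'s, whose one-dimensional measure can be bounded above (after appropriate symmetrization of the $A_i$'s) by a corresponding product with symmetrized sets. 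Iterating the symmetrization over a dense sequence of directions $\theta_k \in S^{N-1}$ and invoking the $L^1$-convergence of iterated Steiner symmetrizations to the full symmetric decreasing rearrangement in $\R^N$ cited in the preliminaries, we pass to the limit and obtain the desired comparison between $\mathcal{L}_N(\bigcap_i \{\langle\cdot, u_i\rangle \in A_i\})$ and $\mathcal{L}_N(\bigcap_i \{\langle\cdot, u_i\rangle \in A_i^\ast\})$.

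\textbf{Main obstacle.} The delicate step is the one-dimensional slice analysis: the shifts $\langle z, u_i\rangle$ vary with $z$, so the slice inequality is not a direct instance of the one-function symmetric-rearrangement inequality. Controlling how the rearrangement of a single $A_j$ (with the others held fixed) interacts with the integration over $z \in \theta^\perp$ requires the classical two-function Riesz--Sobolev rearrangement inequality $\int f(x)g(x-y)dx \leq \int f^\ast(x)g^\ast(x-y)dx$, applied inductively over the number of functions $M$. The careful bookkeeping of these shifts across the Fubini decomposition is what makes the original Rogers and Brascamp--Lieb--Luttinger arguments intricate, and it is precisely this step that I anticipate being the principal technical hurdle to cross.
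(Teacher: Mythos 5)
The paper itself does not prove Theorem \ref{RBLL-th}; it is cited from Rogers and from Brascamp--Lieb--Luttinger and used as a black box, so there is no in-paper argument to compare against. Assessing your proposal on its own terms: Stage~1 (the layer-cake reduction to indicator functions) is correct and is indeed the standard first step. Stage~2, however, contains a genuine gap. Steiner symmetrization of $F(x)=\prod_i \mathbf{1}_{A_i}(\langle x,u_i\rangle)$ in a direction $\theta\in S^{N-1}$ \emph{preserves} $\int_{\R^N} F$, so iterating over a dense set of directions to produce the full rearrangement $F^\ast$ proves no comparison whatsoever: you would end with $\int F = \int F^\ast$, and there is no reason for $F^\ast$ to be dominated by $\prod_i\mathbf{1}_{A_i^\ast}(\langle\cdot,u_i\rangle)$. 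What the theorem requires is to replace the one-dimensional profiles $A_i$ by $A_i^\ast$, which is a different operation from symmetrizing $F$ itself, and the two cannot be conflated. Moreover, the fallback you propose for the slice analysis does not hold: the ``two-function Riesz--Sobolev inequality'' $\int f(x)\,g(x-y)\,dx \ls \int f^\ast(x)\,g^\ast(x-y)\,dx$ is false pointwise in $y$ (take $f=\mathbf{1}_{[0,2]}$, $g=\mathbf{1}_{[0,1]}$, $y=1$: the left side is $1$, the right side is $1/2$); the genuine Riesz rearrangement inequality involves three functions and integration over \emph{both} variables. Rearranging one $A_j$ at a time while holding the others fixed also fails outright: with $N=1$, $M=2$, $u_1=u_2=1$, $A_1=A_2=[1,2]$, symmetrizing only $A_1$ drops the integral from $1$ to $0$.

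The correct route (Rogers, or Brascamp--Lieb--Luttinger, or Burchard's notes) reduces to indicators as you did, but then symmetrizes \emph{all} the $A_i$ simultaneously, via two-point symmetrization/polarization with respect to a common reflection, showing that each such step increases the integral and that iterating over a dense set of reflection centers drives every $A_i$ to $A_i^\ast$; alternatively Rogers uses a direct counting argument on the superposition of the indicator layers. Either way the mechanism is a simultaneous one-dimensional symmetrization of the profiles, not iterated Steiner symmetrization of $F$ on $\R^N$. Your instinct that the slice analysis is the crux is right, but the tool you reach for is both misquoted and insufficient, and the surrounding scaffolding (symmetrizing $F$, passing to $F^\ast$) does not lead anywhere.
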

 
We will write the above inequality in an equivalent form using the
notion of peaked measures. The ideas behind this definition can be
tracked back to Anderson \cite{Anderson} and Kanter \cite{Kanter},
among others, but here we follow the terminology and notation of
Barthe in \cite{Barthe_unimodal}, \cite{Barthe_central}.  Let $
\mu_{1}, \mu_{2}$ be finite Radon measures on $\R^{n}$ with
$\mu_{1}(\R^n) = \mu_{2}(\R^n)$. We say that $\mu_{1}$ is more peaked
than $\mu_{2}$ (and we write $\mu_{1} \succ \mu_{2}$ or $\mu_{2}\prec
\mu_{1}$) if
\begin{equation}
\label{peaked-1}
\mu_{1} ( K) \gr \mu_{2}( K ) 
\end{equation}
for all symmetric convex bodies $K$ in $\R^{n}$. If $X_{1}, X_{2}$ are
random vectors in $\R^{n}$ with distributions $\mu_{1}$ and $\mu_{2}$,
respectively, we write $X_{1} \succ X_{2}$ if $\mu_{1} \succ
\mu_{2}$. Let $f_{1}, f_{2}$ two non-negative integrable functions on
$\R^{n}$ with $\int f_1 = \int f_2$. We write $f_{1} \succ f_{2}$ if
the measures $\mu_{i}$ with densities $f_{i}$ satisfy $ \mu_{1} \succ
\mu_{2} $. It follows immediately from the definition that the
relation $\succ$ is transitive. Moreover if $\mu_{i} \succ \nu_{i}$
and $t_{i}>0$, $1\ls i \ls N$ then $ \sum_{i} t_{i} \mu_{i} \succ
\sum_{i} t_{i} \nu_{i} $. Another consequence of the definition is
that if $ \mu\succ \nu$ and $ E$ is a $k$-dimensional subspace then
the marginal of $\mu$ on $E$, i.e. $\mu\circ P_E^{-1}$, is more peaked
than the marginal of $ \nu$ on $E$. To see this, take any symmetric
convex body $K$ in $E$ and consider the infinite cylinder $C:= K
\times E^{\perp}\subseteq \mathbb R^{n}$. It is enough to check that
$\mu( C) \gr \nu(C)$, and this is satisfied since $C$ can be
approximated from inside by symmetric convex bodies in $\mathbb
R^{n}$. More generally, if $\mu\succ \nu$ then for every linear map
$T$, we have
\begin{equation}
\label{push}
\mu\circ T \succ \nu\circ T,
\end{equation} 
where $\mu\circ T$ is the pushforward measure of $\mu$ through the map $T$. 
  
Recall that $F:\R^{n}\rightarrow \R$ is quasi-concave (quasi-convex)
if for all $s$ the set $ \{ x: F(x) >s\}$ ($ \{ x: F(x) \ls
s\}$) is convex. 

\begin{lemma}
\label{peaked-eq}
Let $\mu_{1}, \mu_{2}$ be Radon measures on $\mathbb R^{n}$ with
$\mu_{1}(\R^n)=\mu_{2}(\R^n)$. Then $\mu_{1} \succ\mu_{2}$ if and only
if
\begin{equation}
\label{succ-equiv}
\int_{\R^{n}} F(x) d\mu_{1}(x) \gr \int_{\R^{n}} F(x) d\mu_{2}(x) 
\end{equation}
 for all even non-negative quasi-concave functions $F$. 
\end{lemma}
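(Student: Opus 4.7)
The plan is to prove the two implications separately. The backward direction is essentially immediate once one checks that indicator functions of symmetric convex bodies are even, non-negative, and quasi-concave. The forward direction is the substantive one, and the natural tool is the layer-cake representation \eqref{eqn:layer_cake} combined with the observation that the superlevel sets of an even quasi-concave function are symmetric convex sets, to which the peakedness hypothesis can be applied.

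For $(\Leftarrow)$, fix a symmetric convex body $K\subseteq \R^n$ and set $F=\mathds{1}_K$. Then $F$ is even and non-negative, and $\{F>s\}$ equals $K$ for $s\in[0,1)$ and is empty for $s\ge 1$, so $F$ is quasi-concave. Plugging into \eqref{succ-equiv} yields $\mu_1(K)\ge\mu_2(K)$, which is exactly \eqref{peaked-1}.

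For $(\Rightarrow)$, assume $\mu_1\succ\mu_2$ and let $F$ be even, non-negative, and quasi-concave. By the layer-cake formula \eqref{eqn:layer_cake} and Fubini,
\begin{equation*}
\int_{\R^n} F\,d\mu_i \;=\; \int_0^\infty \mu_i(\{F>t\})\,dt \qquad (i=1,2).
\end{equation*}
For each $t>0$, the set $C_t:=\{F>t\}$ is convex (quasi-concavity) and symmetric about the origin (evenness of $F$); in particular $0\in C_t$ whenever $C_t\ne\varnothing$, since $\tfrac12 x+\tfrac12(-x)=0$. It therefore suffices to show that $\mu_1(C)\ge\mu_2(C)$ for every symmetric convex Borel set $C$, and then integrate in $t$.

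To push the hypothesis \eqref{peaked-1} from symmetric convex bodies to arbitrary symmetric convex Borel sets $C$, I would approximate from inside: if $C$ has non-empty interior, then $K_n:=\overline{C\cap nB}$ is a symmetric convex body for all sufficiently large $n$ (here $B$ is the Euclidean ball), so $\mu_1(K_n)\ge\mu_2(K_n)$, and monotone convergence yields $\mu_1(C)\ge\mu_2(C)$. If $C$ has empty interior, then $C$ lies in a proper linear subspace $E\subsetneq\R^n$; thickening, $C\subseteq C+\varepsilon B$ which is a symmetric convex body, and the corresponding inequality after letting $\varepsilon\downarrow 0$ gives $\mu_1(C)\le\mu_1(C+\varepsilon B)$ and similarly for $\mu_2$. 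Writing $C=\bigcap_{\varepsilon>0}(C+\varepsilon B)$ and using that $\mu_i$ are finite Radon measures, the inequality on the thickenings passes to the limit. Feeding the resulting bound $\mu_1(C_t)\ge\mu_2(C_t)$ into the layer-cake identity above yields \eqref{succ-equiv}.

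The main subtlety is precisely this approximation step: the level sets of a general even quasi-concave $F$ need neither be bounded nor have non-empty interior, whereas \eqref{peaked-1} is stated for symmetric convex bodies. Once one observes that inner/outer approximation by symmetric convex bodies is compatible with finite Radon measures, the rest of the argument is formal. No additional regularity on $F$ is required beyond measurability, which is automatic from quasi-concavity (level sets are convex, hence Borel).
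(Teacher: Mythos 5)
Your proof follows the paper's exact strategy: indicator functions of symmetric convex bodies for the backward implication, and the layer-cake formula plus Fubini for the forward one. Where you go further than the paper is in trying to justify why the peakedness hypothesis — stated only for symmetric convex \emph{bodies} — applies to the superlevel sets $\{F>t\}$, which are merely symmetric convex sets (possibly unbounded, possibly with empty interior, not necessarily closed). The paper simply applies $\mu_1 \succ \mu_2$ to $\{F>t\}$ without comment, so flagging this is reasonable.

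However, your patch has a flaw: $K_n := \overline{C \cap nB}$ need not be contained in $C$ (taking the closure can add boundary points of $C$ that are not in $C$), so $K_n \uparrow \bar C$ rather than $C$, and monotone convergence gives $\mu_1(\bar C) \ge \mu_2(\bar C)$ — not $\mu_1(C) \ge \mu_2(C)$ unless $\mu_i(\partial C)=0$. A clean repair uses inner regularity of Radon measures: for any compact $K_0 \subseteq C$, the convex hull of $K_0 \cup (-K_0) \cup \bar D$ (with $D$ a small ball in $\mathrm{int}\,C$) is a symmetric convex body lying inside $C$, so $\mu_2(K_0)\le\mu_1(C)$; taking the supremum over compact $K_0\subseteq C$ gives $\mu_2(C)\le\mu_1(C)$. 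Also, the aside that level sets are ``convex, hence Borel'' is not correct in general — convex sets are Lebesgue measurable but need not be Borel — though this is harmless in the paper's applications, where the measures have densities and every convex boundary is null.
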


\begin{proof}
Assume first that $\mu_{1}\succ \mu_{2}$ and that $F$ is even and
quasi-concave. Then by the layer-cake representation and Fubini's
theorem,
$$ \int_{\R^{n}} F(x) d\mu_{1}(x) = \int_{0}^{\infty} \int_{\{ x: F(x) >s\}} d\mu_{1}(x) d s \gr  $$
$$\int_{0}^{\infty} \int_{\{ x: F(x) >s\}} d\mu_{2}(x) d s=
\int_{\R^{n}} F(x) d\mu_{2}(x) .$$ Conversely, if $K$ is a symmetric
convex body then $F:= {\bf 1}_{K}$ is even and quasi-concave and
\eqref{succ-equiv} becomes $ \mu_{1} ( K) \gr \mu_{2}(K)$ so
\eqref{succ-equiv} implies that $\mu_{1} \succ \mu_{2}$.  \end{proof}

\smallskip

 We are now able to state the following equivalent
form of the Rogers/Brascamp-Lieb-Luttinger inequality:

\begin{proposition}
\label{prop-RBLL}
 Let $f_{1}, \ldots , f_{N}$ be non-negative integrable functions on
 $\R$. Then
\begin{equation}
\label{R-BLL-new}
\prod_{i=1}^{N} f_{i} \prec \prod_{i=1}^{N} f_{i}^{\ast} .
\end{equation}
\end{proposition}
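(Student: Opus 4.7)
The plan is to deduce the peaked-measure form directly from Theorem \ref{RBLL-th} by approximating the indicator of a symmetric convex body with products of slab indicators. Writing $\mu_1, \mu_2$ for the measures on $\R^N$ with densities $\prod_{i=1}^N f_i^{\ast}(x_i)$ and $\prod_{i=1}^N f_i(x_i)$, respectively, the normalization $\mu_1(\R^N)=\mu_2(\R^N)$ follows at once from Fubini and the equimeasurability $\|f_i\|_1=\|f_i^{\ast}\|_1$. Hence it suffices to verify $\mu_1(K)\gr\mu_2(K)$ for every symmetric convex body $K\subseteq\R^N$, which by definition is \eqref{peaked-1}.

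Next, I would use the fact that any symmetric convex body $K$ is the intersection of its supporting symmetric slabs. Choosing a countable dense set $\{u_j\}_{j\gr 1}\subseteq S^{N-1}$ and setting $a_j=h_K(u_j)$, the finite intersections
\[
K_M = \bigcap_{j=1}^{M}\{x\in\R^N:\abs{\langle x,u_j\rangle}\ls a_j\}
\]
decrease to $K$, so $\mathds{1}_{K_M}\downarrow \mathds{1}_K$ pointwise. Writing $g_j=\mathds{1}_{[-a_j,a_j]}$, we have
\[
\mathds{1}_{K_M}(x)=\prod_{j=1}^{M}g_j(\langle x,u_j\rangle),
\]
and since $g_j$ is already symmetric decreasing on $\R$, one has $g_j^{\ast}=g_j$. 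Observing that $f_i(x_i)=f_i(\langle x,e_i\rangle)$, I would apply Theorem \ref{RBLL-th} to the $M+N$ functions $g_1,\ldots,g_M,f_1,\ldots,f_N$ with directions $u_1,\ldots,u_M,e_1,\ldots,e_N$. Since the $g_j$ are their own rearrangements, the right-hand side is
\[
\int_{\R^N}\mathds{1}_{K_M}(x)\prod_{i=1}^{N}f_i^{\ast}(x_i)\,dx,
\]
so \eqref{R-BLL} yields $\mu_2(K_M)\ls \mu_1(K_M)$ for every $M$.

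Finally I would pass to the limit as $M\to\infty$. Because $f_i,f_i^{\ast}\in L_1$, we have $\prod f_i,\prod f_i^{\ast}\in L_1(\R^N)$ by Fubini, and $\mathds{1}_{K_M}\downarrow \mathds{1}_K$ with $\mathds{1}_{K_1}$ integrable against both densities, so the dominated (or monotone) convergence theorem gives $\mu_i(K_M)\to \mu_i(K)$ for $i=1,2$. The resulting inequality $\mu_2(K)\ls \mu_1(K)$ holds for every symmetric convex body $K$, which is \eqref{R-BLL-new}. Alternatively, one could first invoke Lemma \ref{peaked-eq} and reduce to even quasi-concave test functions $F$, then use the layer-cake formula \eqref{eqn:layer_cake} to reduce to indicators of the symmetric convex sets $\{F>t\}$; the slab approximation step is essentially the same. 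The main (mild) obstacle is ensuring the slab approximation and the limit pass through cleanly for \emph{arbitrary} integrable $f_i$ without extra regularity; this is handled by choosing $\{u_j\}$ dense, so $K_M\downarrow K$ in Hausdorff distance, and by the integrability of $\prod f_i$ and $\prod f_i^{\ast}$ that allows dominated convergence.
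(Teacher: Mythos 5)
Your proof is correct and follows essentially the same route as the paper: approximate a symmetric convex body $K$ by finite intersections of symmetric slabs, write the indicator of the approximant as a product of one-dimensional slab indicators (each its own rearrangement), apply Theorem \ref{RBLL-th} to the combined family of $M+N$ functions and directions, and pass to the limit. The paper states the same argument more tersely (absorbing the slab widths into the lengths of the $u_i$ and not spelling out the convergence step), but there is no substantive difference.
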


\smallskip

 Let us explain why Theorem \ref{RBLL-th} implies Proposition
 \ref{prop-RBLL}. Note first that without loss of generality we can
 replace the assumption ``integrable" with ``having integral $1$."
 Let $K$ be a symmetric convex body in $\R^{N}$. Then it can be
 approximated by intersections of symmetric slabs of the form
$$ K_{m} := \bigcap_{i=1}^{m} \{ x\in \R^{N} : | \langle x , u_{i}
 \rangle | \ls 1 \} $$ for suitable $u_{1}, \ldots , u_{m}\in
 \R^{N}$. Note that $ {\bf 1}_{K_{m}} = \prod_{i=1}^{m} {\bf
   1}_{[-1,1]} ( \langle \cdot , u_{i} \rangle ) $. Apply
 \eqref{R-BLL} with $M= m+N$ and $u_{m+i}:= e_{i}$,
 $i=1,\ldots,N$. Then (since $ {\bf 1}_{K_{m}} \rightarrow {\bf
   1}_{K}$ in $L_{1}$), we get that
 \begin{equation}
   \label{eqn:intK}
    \int_{K} \prod_{i=1}^{N} f_{i}(x_{i}) d x \ls \int_{K}
 \prod_{i=1}^{N} f_{i}^{\ast}(x_{i}) d x . 
 \end{equation}
 Since $K$ is an arbitrary symmetric convex body in $\mathbb R^{N}$,
 we get \eqref{R-BLL-new}.  The latter is an extension of a theorem of
 Anderson \cite{Anderson} and it is the basis of Christ's extension of
 the Rogers/Brascamp-Lieb-Luttinger inequality \cite{Christ_kplane};
 see also the thesis of Pfiefer \cite{Pfiefer} and work of Baernstein
 and Loss \cite{BaerLoss}.

In the other direction, consider non-negative integrable functions
$f_{1}, \ldots , f_{m}$ and let $u_{1}, \ldots , u_{m}$ be vectors in
$\R^{N}$. Write $ F(x):= \prod_{i=1}^{m} f_{i}(x_{i})$ and $ F_{\ast}
(x) := \prod_{i=1}^{m} f_{i}^{\ast}(x_{i})$. Let $ T$ be the $m\times
N$ matrix with rows $u_1,\ldots,u_m$. Note that \eqref{R-BLL-new}
implies that $ F\prec F_{\ast}$. By \eqref{push} we also have that $
F\circ T \prec F_{\ast} \circ T$ so that for any symmetric convex body
$K\subseteq \R^N$, $\int_K F\circ T(x) dx \leq \int_K F_{\ast}\circ T(x)dx$,
hence
$$ \int_{\R^{N} } \prod_{i=1}^{m} f_{i}( \langle x, u_{i}\rangle ) d x
\ls \int_{\R^{N} } \prod_{i=1}^{m} f_{i}^{\ast}( \langle x,
u_{i}\rangle ) d x $$ which is \eqref{R-BLL}.

\smallskip

 Actually we will use the Rogers/Brascamp-Lieb-Luttinger
inequality in the following form \cite{Christ_kplane}.

\begin{corollary}
\label{cor-BLL}
 Let $f_{1}, \ldots , f_{m}$ be non-negative integrable functions on
 $\R$.  Let $u_{1}, \ldots , u_{m}$ be non-zero vectors in $\R^{N}$
 and let $ F_{1}, \ldots , F_{M}$ be non-negative, even, quasi-concave
 functions on $\R^{N}$. Then
\begin{equation}
\label{cor-BLL-1}
\int_{\R^{N}}  \prod_{j=1}^{M}  F_{j} ( x) \prod_{i=1}^{m} f_{i} ( \langle x, u_{i} \rangle ) d x \ls \int_{\R^{N}}  \prod_{j=1}^{M}  F_{j} ( x) \prod_{i=1}^{m} f_{i}^{\ast} ( \langle x, u_{i} \rangle ) d x .
\end{equation}
 Also, if $F$ is a non-negative, even, quasi-convex function on
 $\R^{N}$, we have 
\begin{equation}
\label{cor-BLL-2}
\int_{\R^{N}} F(x) \prod_{i=1}^{N} f_{i} (x_{i}) d x \gr
\int_{\R^{N}} F(x) \prod_{i=1}^{m} f_{i}^{\ast} ( x_{i}) d x .
\end{equation}
\end{corollary}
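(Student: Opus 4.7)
The plan is to reduce both inequalities to the following key fact, which was effectively proved in the discussion following Proposition \ref{prop-RBLL}: for every symmetric convex body $K \subseteq \R^N$, every collection of non-zero vectors $u_1,\ldots,u_m\in\R^N$, and every non-negative integrable $f_i$ on $\R$,
\begin{equation*}
\int_K \prod_{i=1}^m f_i(\langle x, u_i\rangle)\, dx \ls \int_K \prod_{i=1}^m f_i^{\ast}(\langle x, u_i\rangle)\, dx. \quad (\ast)
\end{equation*}
This is Theorem \ref{RBLL-th} applied with the $u_i$'s augmented by additional vectors $w_k$ carrying functions $\mathds{1}_{[-1,1]}$ (which coincide with their own symmetric decreasing rearrangements), so that the slab intersections $\bigcap_k\{|\langle x, w_k\rangle|\ls 1\}$ approximate $K$ from outside, exactly as in the derivation of \eqref{eqn:intK}.

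For \eqref{cor-BLL-1}, I would layer-cake each even quasi-concave $F_j$: its superlevel sets $A_j(t):=\{F_j>t\}$ are symmetric convex bodies, so
\begin{equation*}
\prod_{j=1}^M F_j(x) = \int_{(0,\infty)^M} \mathds{1}_{K(t)}(x)\, dt, \qquad K(t) := \bigcap_{j=1}^M A_j(t_j),
\end{equation*}
and each $K(t)$ is itself a symmetric convex body. Substituting this representation into the left-hand side of \eqref{cor-BLL-1} and applying Fubini reduces the claim to applying $(\ast)$ on $K(t)$ for each $t\in(0,\infty)^M$; reassembling via Fubini on the right then delivers \eqref{cor-BLL-1}.

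For \eqref{cor-BLL-2}, I would layer-cake $F$ itself; now the sublevel sets $B(t):=\{F\ls t\}$ are symmetric convex sets (possibly unbounded), and $\mathds{1}_{\{F>t\}} = 1 - \mathds{1}_{B(t)}$. Equimeasurability gives $\int_{\R^N}\prod_i f_i(x_i)\,dx = \prod_i \int f_i = \prod_i \int f_i^{\ast} = \int_{\R^N}\prod_i f_i^{\ast}(x_i)\,dx$, so the difference of the two sides of \eqref{cor-BLL-2} telescopes to
\begin{equation*}
\int_0^\infty \left[\,\int_{B(t)}\prod_{i=1}^N f_i^{\ast}(x_i)\,dx \,-\, \int_{B(t)}\prod_{i=1}^N f_i(x_i)\,dx\,\right] dt,
\end{equation*}
and each bracket is non-negative by $(\ast)$ (with $u_i=e_i$) applied after approximating $B(t)$ by the symmetric convex bodies $B(t)\cap RB$ and letting $R\to\infty$.

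The main obstacle I foresee is essentially bookkeeping: justifying Fubini and the exchanges of limits with integrals under the implicit integrability hypotheses of the corollary, and accommodating the unboundedness of $B(t)$ in \eqref{cor-BLL-2}. Conceptually, the heart of \eqref{cor-BLL-2} is that the reversal of the direction of $(\ast)$ upon passing from quasi-concave to quasi-convex weights is powered by the equality of total masses $\int f_i = \int f_i^{\ast}$; no new rearrangement inequality beyond $(\ast)$ is needed.
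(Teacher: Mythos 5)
Your proof is correct, and for \eqref{cor-BLL-2} it follows exactly the paper's route: layer-cake $F$, pass to the complements $\{F\ls t\}$, use equimeasurability $\int f_i = \int f_i^*$, and invoke the Rogers/Brascamp--Lieb--Luttinger inequality on the symmetric convex sublevel sets. For \eqref{cor-BLL-1}, however, your route is genuinely different and, in fact, safer than what the paper sketches. The paper's proof asserts that $\prod_{j=1}^M F_j$ is again even and quasi-concave and then feeds this single function into Lemma~\ref{peaked-eq}. But the pointwise product of even quasi-concave functions on $\R^N$ need not be quasi-concave when $N\gr 2$ (for instance, take $F_1$ and $F_2$ to be step functions whose level sets are two differently elongated symmetric polygons; a superlevel set of $F_1F_2$ is then a nonconvex union of intersections). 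Your multi-dimensional layer-cake $\prod_j F_j(x)=\int_{(0,\infty)^M}\mathds{1}_{K(t)}(x)\,dt$ with $K(t)=\bigcap_j\{F_j>t_j\}$ replaces that claim by the correct observation that each slice $K(t)$, being an intersection of symmetric convex sets, is itself symmetric and convex, after which $(\ast)$ applies fiberwise and Fubini finishes the job. This achieves the same reduction the paper intends (to indicator functions of symmetric convex sets) while bypassing the questionable step; it is the standard repair and arguably what the authors had in mind. One small point of hygiene: you call the superlevel sets $A_j(t)$ ``symmetric convex bodies,'' but they may be unbounded; as you already note for $B(t)$ in part two, an exhaustion by $K(t)\cap RB$ as $R\to\infty$ handles this uniformly, and the integrability of the $f_i$ keeps all the Fubini and monotone-convergence exchanges legitimate.
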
  

\begin{proof}
 {\it (Sketch)}. Note that $ \prod_{j=1}^{M} F_{j}(x)$ is again
 quasi-concave and even.  So \eqref{cor-BLL-1} follows from
 Proposition \ref{prop-RBLL} and Lemma \ref{peaked-eq}.

For the proof of \eqref{cor-BLL-2} first notice that it is enough to
prove in the case that $\int_{\R} f_{i}(t) d t = 1$, $1\ls i \ls
N$. Recall that for every $t>0$, $\{ F\ls t\}$ is convex and
symmetric. Thus using Proposition \ref{prop-RBLL} and Lemma
\ref{peaked-eq}, we get 
\begin{eqnarray*}
\lefteqn{\int_{\R^{N}} F(x) \prod_{i=1}^{N}f_{i}(x_{i}) d x}\\ & & =
\int_{\R^{N}} \left( \int_{0}^{\infty} {\bf 1}_{\{F>t\}} (x) d t
\right) \prod_{i=1}^{N} f_{i} ( x_{i}) d x \\ & & = \int_{0}^{\infty}
\int_{\R^{N}} (1- {\bf 1}_{\{F\ls t \} }) \prod_{i=1}^{N} f_{i} (
x_{i}) d x d t \\ & & = \int_{0}^{\infty} \left( \int_{\R^{N}}
\prod_{i=1}^{N} f_{i}^{\ast} (x_{i}) d x - \int_{\R^{N} } {\bf
  1}_{\{F\ls t \} }) \prod_{i=1}^{N} f_{i} ( x_{i}) d x \right) d t
\\ & & \gr \int_{0}^{\infty} \left( \int_{\R^{N}} \prod_{i=1}^{N}
f_{i}^{\ast} (x_{i}) d x - \int_{\R^{N} } {\bf 1}_{\{F\ls t \} })
\prod_{i=1}^{N} f_{i}^{\ast} ( x_{i}) d x \right) d t \\& &
=\int_{\R^{N}} F(x) \prod_{i=1}^{N} f_{i}^{\ast}(x_{i}) d x.
\end{eqnarray*}
\end{proof}


\smallskip


We say that a function $f$ on $\mathbb R^{n}$ is unimodal if it is the
increasing limit of a sequence of functions of the form,
$$ \sum_{i=1}^{m} t_{i} {\bf 1}_{K_{i}}, $$ where $t_{i} \gr 0$ and $
K_{i}$ are symmetric convex bodies in $\mathbb R^{n}$. Even
quasi-concave functions are unimodal and every even and non-increasing
function on $\R_{+}$ is unimodal.  In particular, for every integrable
$f:\mathbb R^{n}\rightarrow \R_{+}$, $f^{\ast}$ is unimodal. We will
use the following lemma, which is essentially the bathtub principle
(e.g., \cite{LL_book}).
\pagebreak
\begin{lemma}
\label{unim-1}
 Let $ f:\R^{n}\rightarrow \R_{+}$ be an integrable function.
\begin{enumerate}
\item If $g:\R_{+}$ is a measurable function, $\beta:=
  \int_{0}^{\infty} g(t) t^{n-1} d t <\infty$ and $ \phi:\mathbb
  R_{+}\rightarrow \mathbb R_{+}$ is a non-decreasing function, then
\begin{equation}
\label{Lemma35}
\int_{0}^{\infty}\phi(t) g(t) t^{n-1} dt\gr \int_{0}^{\infty} \phi(t)
h(t) t^{n-1} d t ,
\end{equation} 
where $h:= {\bf 1}_{[0, (n\beta)^{\frac{1}{n}}]}$. If $\phi$ is
non-increasing, then the inequality in \eqref{Lemma35} is reversed.
\item If $n=1$, $\|f\|_{1}=1$, $\|f\|_{\infty} \ls 1$ and $f$ is even,
  then $ f^{\ast} \prec {\bf 1}_{[-\frac{1}{2}, \frac{1}{2}]}$.
\item If $f$ is rotationally invariant, $\|f\|_{1}=1$, and
  $\|f\|_{\infty} \ls 1$, then for every star-shaped set $K\subseteq
  \R^{n}$, $ \int_{K} f(x) d x \ls \int_{K} {\bf 1}_{\wtd{B}} (x) d x
  $.
\item If $\|f\|_{1}=1$, $\|f\|_{\infty} \ls 1$, then $ f^{\ast} \prec
  {\bf 1}_{\wtd{B}} $.
\end{enumerate}
\end{lemma}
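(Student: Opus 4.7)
The plan is to prove part~(1) directly and then derive the remaining three from it. For part~(1), read under the natural hypothesis $0\ls g\ls 1$ (matching the bounds in parts~(2)--(4)), I set $r:=(n\beta)^{1/n}$ so that $h=\mathbf{1}_{[0,r]}$ satisfies $\int_0^\infty h(t)t^{n-1}\,dt=\beta$, and hence $\int_0^\infty (g-h)t^{n-1}\,dt=0$. The sign structure is the key: on $[0,r]$ one has $g-h=g-1\ls 0$, while on $[r,\infty)$ one has $g-h=g\gr 0$. For non-decreasing $\phi$, $\phi(t)-\phi(r)$ has the opposite sign pattern, so $(\phi(t)-\phi(r))(g(t)-h(t))\gr 0$ pointwise; integrating against $t^{n-1}\,dt$ and subtracting $\phi(r)\int_0^\infty(g-h)t^{n-1}\,dt=0$ yields the inequality. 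The non-increasing case is symmetric. This is the standard bathtub principle and I expect no difficulty here.

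For part~(2), equimeasurability gives $\|f^*\|_\infty\ls 1$ and $\|f^*\|_1=1$, so for any symmetric interval $[-a,a]$ one has
\begin{equation*}
\int_{-a}^{a} f^*(x)\,dx \ls \min\{2a,\,1\} = \int_{-a}^{a} \mathbf{1}_{[-1/2,1/2]}(x)\,dx,
\end{equation*}
which is exactly $\mathbf{1}_{[-1/2,1/2]}\succ f^*$. Part~(3) reduces to part~(1) via polar coordinates: writing $f(x)=g(|x|)$ with $\|g\|_\infty\ls 1$, the condition $\|f\|_1=1$ becomes $n\omega_n\int_0^\infty g(r)r^{n-1}\,dr=1$, so that $(n\beta)^{1/n}=\omega_n^{-1/n}$ is precisely the radius of $\wtd{B}$. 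For a star-shaped $K$ with radial function $\rho_K$, applying part~(1) with the non-increasing $\phi:=\mathbf{1}_{[0,\rho_K(\theta)]}$ gives, for each $\theta\in S^{n-1}$,
\begin{equation*}
\int_0^{\rho_K(\theta)} g(r)\, r^{n-1}\,dr \ls \int_0^{\rho_K(\theta)} \mathbf{1}_{[0,\omega_n^{-1/n}]}(r)\, r^{n-1}\,dr;
\end{equation*}
integrating in $\theta$ against $n\omega_n\,d\sigma$ yields $\int_K f\ls \int_K \mathbf{1}_{\wtd{B}}$.

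Part~(4) then follows by applying part~(3) to $f^*$, which is rotationally invariant with $\|f^*\|_1=1$ and $\|f^*\|_\infty\ls 1$. Since every symmetric convex body is star-shaped, $\int_K f^* \ls \int_K \mathbf{1}_{\wtd{B}}$ for all such $K$, which is the definition of $f^*\prec \mathbf{1}_{\wtd{B}}$. The only real step is the one-dimensional bathtub principle in part~(1); the remaining parts are essentially reductions to it. The main subtlety is the implicit boundedness hypothesis $g\ls 1$ in part~(1), without which a concentration of mass near the origin would violate the stated inequality — but this matches the hypotheses imposed in the subsequent applications.
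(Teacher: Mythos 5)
Your proof is correct and follows essentially the same route as the paper: part~(1) is the one-dimensional bathtub principle, part~(2) is the $n=1$ case (the paper derives it from part~(1) with $\phi=\mathbf{1}_{[0,a]}$, while your $\min\{2a,1\}$ argument is the same computation written directly), and parts~(3)--(4) reduce to part~(1) via polar coordinates exactly as the paper indicates. You are also right to flag that part~(1) implicitly requires the boundedness hypothesis $0\ls g\ls 1$ — without it the inequality fails (e.g., $g=N\mathbf{1}_{[0,\eps]}$ for large $N$ with $\phi(t)=t$ or $\phi(t)=e^{-t}$), though this hypothesis is present in all the applications within the lemma and in the cited reference.
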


\begin{proof} 
  The proof of the first claim is standard, see e.g. \cite[Lemma
    3.5]{PaoPiv_probtake}. The second claim follows from the first, by
  choosing $n=1$, $\beta= \frac{1}{2}$ and $\phi:= {\bf 1}_{[0,a]}$,
  $a>0$.  The third claim follows by applying \eqref{Lemma35} after
  writing the desired inequality in polar coordinates. The last claim
  follows immediately from the third.
\end{proof}

A fundamental result on peaked measures is the following result of
Kanter \cite{Kanter}

\begin{theorem}
  \label{Kanter}
  Let $f_{1}, f_{2}$ be functions on $\R^{n_{1}}$ such that $ f_{1}
  \succ f_{2}$ and $f$ a unimodal function on $\R^{n_{2}}$. Then
  \begin{equation}
     f f_{1} \succ f f_{2} .
    \end{equation}
In particular, if $f_{i} , g_{i}$ are unimodal functions on
$\R^{n_{i}}$, $1\ls i \ls M$ and $f_{i} \succ g_{i}$ for all $i$, then
  \begin{equation} \prod_{i=1}^{M} f_{i} \succ  \prod_{i}^{M} g_{i} . \end{equation}
\end{theorem}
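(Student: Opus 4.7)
The plan is to prove $f f_1 \succ f f_2$ by first reducing to the case where $f$ is the indicator of a single symmetric convex body $L\subseteq\R^{n_2}$, and then unfolding the relevant integral by Fubini so that the inner weight against $f_i$ becomes an even quasi-concave function of $x\in\R^{n_1}$; the characterization in Lemma~\ref{peaked-eq} then closes the argument.

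For the reduction, write $f$ as an increasing pointwise limit of finite sums $\sum_{i} t_{i,k}\, \mathds{1}_{L_{i,k}}$ with $t_{i,k}\gr 0$ and $L_{i,k}\subseteq\R^{n_2}$ symmetric convex bodies, which is possible since $f$ is unimodal. The relation $\succ$ is preserved under nonnegative linear combinations and under monotone limits of the densities (both immediate from the defining inequality on symmetric convex bodies together with monotone convergence; the common total mass follows from $f_1\succ f_2$ by testing on a large ball), so it suffices to prove $\mathds{1}_L\, f_1 \succ \mathds{1}_L\, f_2$ for a single symmetric convex body $L\subseteq\R^{n_2}$.

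Now fix such an $L$ and a symmetric convex test body $K\subseteq\R^{n_1+n_2}$. Set $A := K\cap(\R^{n_1}\times L)$, which is again a symmetric convex body in $\R^{n_1+n_2}$, write $A^{x} := \{y\in\R^{n_2}:(x,y)\in A\}$, and define
\[
\Phi(x) \;:=\; V_{n_2}(A^{x}) \;=\; V_{n_2}(K^{x}\cap L).
\]
By Fubini both sides of the target inequality become $\int_{\R^{n_1}} \Phi(x)\, f_i(x)\, dx$ for $i=1,2$. The function $\Phi$ is non-negative; it is even because the symmetry of $A$ gives $A^{-x}=-A^{x}$; and it is quasi-concave by Brunn's principle, which asserts that $x\mapsto V_{n_2}(A^{x})^{1/n_2}$ is concave on its support (a direct consequence of Brunn-Minkowski applied in the $y$-fibers of the convex body $A$). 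Applying Lemma~\ref{peaked-eq} to this $\Phi$ together with $f_1\succ f_2$ yields $\int \Phi f_1\gr \int \Phi f_2$, which is exactly what was wanted.

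The ``in particular'' assertion follows by induction on $M$. The key auxiliary fact is that a product of unimodal densities on disjoint coordinate blocks is itself unimodal, since products $L\times L'$ of symmetric convex bodies are symmetric convex in the product space and the indicator-sum approximations tensor. At the inductive step one applies the main statement twice---first, with unimodal factor $f_1$, to upgrade the inductive hypothesis $\prod_{i\gr 2}f_i\succ \prod_{i\gr 2}g_i$ to $f_1 \prod_{i\gr 2}f_i \succ f_1\prod_{i\gr 2}g_i$; second, with unimodal factor $\prod_{i\gr 2}g_i$, to transfer $f_1\succ g_1$ to $f_1\prod_{i\gr 2}g_i\succ g_1\prod_{i\gr 2}g_i$---and then chains the two by transitivity of $\succ$. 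The main obstacle is the quasi-concavity of $\Phi$, which is exactly where Brunn-Minkowski enters the argument; once that ingredient is in place, all remaining steps are routine.
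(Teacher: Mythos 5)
Your proof is correct and takes essentially the same route as the paper's: both reduce to the case $f=\mathbf{1}_L$ for a symmetric convex $L$, and both recognize the fiber-volume function $\Phi(x)=V_{n_2}(K^x\cap L)$ as even and quasi-concave so that Lemma~\ref{peaked-eq} closes the argument. The only (inconsequential) difference is that you establish quasi-concavity via Brunn's concavity principle, while the paper invokes Pr\'ekopa--Leindler to get log-concavity of the same function; you also spell out the routine induction for the ``in particular'' clause, which the paper leaves implicit.
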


\begin{proof}(Sketch) 
Without loss of generality, assume $\int f_1 = \int f_2 =\int f =
1$. Consider first the case where $f:= {\bf 1}_{L}$ for some symmetric
convex body $L$ in $\R^{n_{2}}$. Let $K$ be a symmetric convex body in
$\R^{n_{1}} \times \R^{n_{2}}$. The Pr\'ekopa-Leindler inequality
implies that the even function
$$ F( x) := \int_{\R^{n_{2}}} {\bf 1}_{K} ( x,y) {\bf 1}_{L} ( y) d
  y $$ is $\log$-concave. So, using Lemma \ref{peaked-eq}, 
$$ \int_{\R^{n_{1}} }\int_{ \R^{n_{2}} }{\bf 1}_{K}( x,y) f_{1}(x) f(y) d x d y = \int_{\R^{n_{1}} }F(x) f_{1} (x) d x \gr $$
$$ \int_{\R^{n_{1}}} F(x) f_{2} (x) d x = \int_{\R^{n_{1}} }\int_{
    \R^{n_{2}} }{\bf 1}_{K}( x,y) f_{2}(x) f(y) d x d y,$$ hence $f f_{1} \succ f f_{2}$. The
  general case follows easily.  \end{proof}

\smallskip

Theorem \ref{Kanter} and Lemma \ref{unim-1} immediately imply the
following corollary.

\begin{corollary}
\label{Kanter-cor}
 Let $ f_{1}, \ldots , f_{m}:\R^{n}\rightarrow \R_{+}$ be probability
 densities of continuous distributions such that $\max_{i\ls M}
 \|f_{i} \|_{\infty} \ls 1$.  If $n=1$, then
\begin{equation}
\label{Kanter-cor-1}
\prod_{i=1}^{m} f_{i}^{\ast} \prec {\bf 1}_{Q_{m}}
\end{equation} 
where $Q_{m}$ is the $m$-dimensional cube of volume $1$ centered at
0. In the general case we have that
\begin{equation}
\label{Kanter-cor-2}
\prod_{i=1}^{m} f_{i}^{\ast} \prec\prod_{i=1}^{m} {\bf 1}_{\wtd{B}} .
\end{equation} 
\end{corollary}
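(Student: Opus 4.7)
The plan is to combine the two cited ingredients: Lemma \ref{unim-1} provides a one-factor comparison of each $f_i^\ast$ with the appropriate ``ball" indicator, and Kanter's theorem (Theorem \ref{Kanter}) then tensorizes these comparisons into the product statement. Throughout, the key observation that makes Kanter's theorem applicable is that both sides of each one-factor comparison are unimodal: $f_i^\ast$ is unimodal because it is a symmetric decreasing rearrangement of an integrable function (as noted just before Lemma \ref{unim-1}), while $\mathbf{1}_{[-1/2,1/2]}$ and $\mathbf{1}_{\wtd{B}}$ are unimodal as indicators of symmetric convex bodies.

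For the general $n$ case I would proceed as follows. For each fixed $i$, the hypothesis $\|f_i\|_1 = 1$ and $\|f_i\|_\infty \ls 1$, combined with the fact that $f_i^\ast$ is equimeasurable with $f_i$ (so the same $L^1$ and $L^\infty$ bounds hold for $f_i^\ast$), puts $f_i^\ast$ in position to apply part (4) of Lemma \ref{unim-1}, which yields
\[
f_i^\ast \prec \mathbf{1}_{\wtd{B}} \qquad (1 \ls i \ls m).
\]
Both $f_i^\ast$ and $\mathbf{1}_{\wtd{B}}$ are unimodal functions on $\R^n$, so the product form of Theorem \ref{Kanter} (with $n_i = n$ for each $i$) applies directly and gives
\[
\prod_{i=1}^{m} f_i^\ast \prec \prod_{i=1}^{m} \mathbf{1}_{\wtd{B}},
\]
as a relation between measures on $(\R^n)^m$, which is \eqref{Kanter-cor-2}.

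The case $n=1$ is analogous but uses part (2) of Lemma \ref{unim-1} in place of part (4). Since $f_i^\ast$ is even, $\|f_i^\ast\|_1 = 1$, and $\|f_i^\ast\|_\infty \ls 1$, the lemma yields $f_i^\ast \prec \mathbf{1}_{[-1/2, 1/2]}$ for each $i$. Again both sides are unimodal functions on $\R$, so Theorem \ref{Kanter} tensorizes to
\[
\prod_{i=1}^{m} f_i^\ast \prec \prod_{i=1}^{m} \mathbf{1}_{[-1/2,1/2]} = \mathbf{1}_{Q_m},
\]
which is \eqref{Kanter-cor-1}.

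There is no real obstacle here: the only point requiring a moment of care is verifying the hypotheses of Kanter's theorem, namely that each $f_i^\ast$ and each ``ball" indicator is unimodal in its one-factor variable, and that the product on the right-hand side of \eqref{Kanter-cor-1} literally equals $\mathbf{1}_{Q_m}$ since $Q_m = [-1/2,1/2]^m$ has volume one. Once these are in place, the statement is a direct assembly of the two preceding results.
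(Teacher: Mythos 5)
Your proof is correct and takes exactly the route the paper intends: the paper states the corollary as an immediate consequence of Theorem \ref{Kanter} and Lemma \ref{unim-1}, and you spell out that assembly by applying parts (2) and (4) of Lemma \ref{unim-1} to get the one-factor comparisons $f_i^\ast \prec \mathbf{1}_{[-1/2,1/2]}$ (resp.\ $f_i^\ast \prec \mathbf{1}_{\wtd{B}}$) and then invoking Kanter's product statement to tensorize. One cosmetic remark: it is cleaner to apply Lemma \ref{unim-1}(4) directly to $f_i$ (whose conclusion is already about $f_i^\ast$) rather than to $f_i^\ast$, which saves you from having to invoke the idempotence $(f_i^\ast)^\ast = f_i^\ast$; this does not affect correctness.
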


\bigskip


\subsection{Multidimensional case}

\medskip
 Let $f$ be a non-negative function on $\R^{n}$, $\theta \in S^{n-1}$
 and $z\in \theta^{\perp}$. We write $f_{z, \theta} (t) :=
 f_{z}(\theta):= f( z+ t \theta)$.  Let $G$ be a non-negative function
 on the $N$-fold product $\mathbb R^{n} \times \ldots \times \mathbb
 R^{n}$. Let $\theta \in S^{n-1}$ and let $Y:= \{ y_{1}, \ldots ,
 y_{N}\} \subseteq \theta^{\perp}:= \{ y\in \mathbb R^{n}: \langle y,
 \theta \rangle =0\}$. We define a function $G_{Y} : \mathbb R^{N}
 \rightarrow \mathbb R_{+}$ as
$$ G_{Y, \theta} ( t_{1}, \ldots , t_{N}) := G ( y_{1} + t_{1} \theta
 , \ldots , y_{N} + t_{N} \theta ) . $$ We say that $G:\mathbb R^{n}
 \times \ldots \times \mathbb R^{n}\rightarrow \R_{+}$ is Steiner
 concave if for every $\theta$ and $Y\subseteq \theta^{\perp}$ we have
 that $G_{Y, \theta}$ is even and quasi-concave; similarly, we say $G$
 is Steiner convex if $G_{Y,\theta}$ is even and quasi-convex. For
 example, if $N=n$, then negative powers of the absolute value of the
 determinant of an $n\times n$ matrix are Steiner concave since the
 determinant is a multi-linear function of its columns (or rows). Our
 results depend on the following generalization of the Rogers and
 Brascamp-Lieb-Luttinger inequality due to Christ \cite{Christ_kplane}
 (our terminology and presentation is suited for our needs and differs
 slightly from \cite{Christ_kplane}).

\begin{theorem}
\label{MultiBLL}
 Let $f_{1},\ldots , f_{N}$ be non-negative integrable functions on
 $\mathbb R^{n}$, $A$ an $ N \times \ell$ matrix. Let $ F^{(k)} :
 (\mathbb R^{n})^{\ell}\rightarrow \R_{+}$
 be Steiner concave functions $1\ls k \ls M$ and let $\mu$ be a
 measure with a rotationally invariant quasi-concave density on
 $\mathbb R^{n}$. Then
$$ \int_{\mathbb R^{n}} \ldots \int_{\mathbb R^{n}} \prod_{k=1}^{M}
 F^{(k)} ( x_{1}, \ldots , x_{\ell}) \prod_{i=1}^{N} f_{i} \left(
 \sum_{j=1}^{\ell} a_{ij} x_{j}\right) d \mu(x_{\ell}) \ldots
 d\mu(x_{1} )\ls $$
\begin{equation}
\label{Multi-BLL-1}
 \int_{\mathbb R^{n}} \ldots \int_{\mathbb R^{n}} \prod_{k=1}^{M}
 F^{(k)} ( x_{1}, \ldots , x_{\ell}) \prod_{i=1}^{N} f_{i}^{\ast}
 \left( \sum_{j=1}^{\ell} a_{ij} x_{j}\right) d \mu(x_{\ell}) \ldots
 d\mu(x_{1}) .
\end{equation}
\end{theorem}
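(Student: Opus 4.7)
The plan is to derive \eqref{Multi-BLL-1} from a single-direction Steiner symmetrization inequality and then iterate. Concretely, I would first establish that for every $\theta \in S^{n-1}$,
$$
I(f_1,\ldots,f_N) \ls I\!\left(f_1^{\ast}(\cdot|\theta),\ldots,f_N^{\ast}(\cdot|\theta)\right),
$$
where $I(\,\cdot\,)$ denotes the left-hand side of \eqref{Multi-BLL-1}. Successive Steiner symmetrizations along a suitable sequence of directions, combined with the convergence fact recalled in \S\ref{section:prelim}, will then replace the $f_i^{\ast}(\cdot|\theta)$ by the full rearrangements $f_i^{\ast}$.

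For the single-$\theta$ step, write $x_j = z_j + t_j\theta$ with $z_j \in \theta^{\perp}$ and $t_j \in \R$, so that
$$
\sum_{j=1}^{\ell} a_{ij}\, x_j = w_i + \langle a_i, t\rangle\, \theta, \qquad w_i := \sum_{j=1}^\ell a_{ij} z_j \in \theta^{\perp}, \quad a_i := (a_{i1},\ldots,a_{i\ell}) \in \R^{\ell}.
$$
Fix $z=(z_1,\ldots,z_\ell) \in (\theta^{\perp})^{\ell}$ and look at the inner integral in $t=(t_1,\ldots,t_\ell) \in \R^{\ell}$. By Steiner concavity, each map $t \mapsto F^{(k)}(z_1+t_1\theta,\ldots,z_\ell+t_\ell\theta)$ is even and quasi-concave on $\R^{\ell}$. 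Rotational invariance together with quasi-concavity forces the density $\rho$ of $\mu$ to be radial and non-increasing in $|x|$, so each $t_j \mapsto \rho(z_j+t_j\theta) = \tilde\rho\!\left(\sqrt{|z_j|^{2}+t_j^{2}}\right)$ is even and non-increasing, hence even and quasi-concave as a function on $\R^{\ell}$. Setting $g_{i,z}(s) := f_i(w_i + s\theta)$, the $f_i$-factor becomes $g_{i,z}(\langle a_i, t\rangle)$. Apply Corollary \ref{cor-BLL}, inequality \eqref{cor-BLL-1}, to the $t$-integral, grouping $\prod_k F^{(k)}_{z,\theta}(t) \cdot \prod_j \rho(z_j+t_j\theta)$ as the even quasi-concave weight, the $g_{i,z}$ as the one-dimensional functions, and $a_i$ as the linear-form coefficients. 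Since $g_{i,z}^{\ast}(s) = f_i^{\ast}(w_i + s\theta \mid \theta)$ by the very definition of the Steiner symmetral, integrating over $z$ by Fubini closes the single-$\theta$ step.

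Finally I would iterate. By truncation and monotone convergence, reduce to the case where each $f_i$ is bounded and compactly supported. The fact quoted in \S\ref{section:prelim} supplies a sequence $\{\theta_k\} \subseteq S^{n-1}$ along which the iterated Steiner symmetrizations of $f_1$ converge in $L^1$ to $f_1^{\ast}$. Applying the single-$\theta$ step repeatedly along this sequence, the companion iterates $f_i^{(k)}$, $i \gr 2$, stay equimeasurable with $f_i$, hence lie in a fixed $L^{\infty}$ ball with common compact support; along a subsequence each $f_i^{(k)} \to \tilde f_i$ in $L^1$ with $\tilde f_i$ equimeasurable with $f_i$, yielding $I(f_1,\ldots,f_N) \ls I(f_1^{\ast},\tilde f_2,\ldots,\tilde f_N)$. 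Since $f_1^{\ast}$ is radial, it is invariant under every subsequent Steiner symmetrization, so the same procedure applied in the $\tilde f_2$-slot produces $\tilde f_2^{\ast} = f_2^{\ast}$ without disturbing $f_1^{\ast}$; iterating for $i=3,\ldots,N$ yields \eqref{Multi-BLL-1}. The main obstacle is this limit step: the symmetrization-convergence fact cited in the Preliminaries is stated for a single function, and upgrading it to the multi-function setting is precisely what the one-at-a-time extraction above is engineered to do. An alternative would be to invoke a simultaneous convergence statement for finite families of compactly supported $L^1$ functions, after which the iteration collapses to a single limit.
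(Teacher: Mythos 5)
Your proposal follows the same route as the paper: reduce to a single-direction Steiner symmetrization step via Corollary \ref{cor-BLL} (writing $x_j = z_j + t_j\theta$, applying \eqref{cor-BLL-1} in the $t$-variables, and undoing via Fubini), then iterate to pass from $f_i^{\ast}(\cdot\mid\theta)$ to $f_i^{\ast}$. You are actually more explicit than the paper's sketch on one point: the paper does not say how the measure $\mu$ is handled, whereas you correctly observe that a rotationally invariant quasi-concave density $\rho$ factors along the fiber as $\prod_j \rho(z_j+t_j\theta)$, each factor even and quasi-concave in $t_j$ alone, and can therefore be absorbed into the quasi-concave weight of Corollary \ref{cor-BLL} alongside the $F^{(k)}_{Y,\theta}$. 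That detail is needed for the argument to go through and you supply it cleanly.

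On the limit step, the paper simply cites \cite{BLL}, and the result there is in fact a \emph{simultaneous} approximation: a single sequence of directions $\theta_k$ along which \emph{all} of $f_1,\ldots,f_N$ (compactly supported, bounded) converge in $L^1$ to their symmetric decreasing rearrangements. With that version of the fact, the iteration closes immediately and your one-at-a-time extraction is unnecessary. Your alternative has a genuine gap as written: from equimeasurability and a uniform $L^{\infty}$ bound with common compact support you get weak-$\ast$ precompactness, not $L^1$-precompactness, so ``along a subsequence each $f_i^{(k)}\to\tilde f_i$ in $L^1$'' does not follow without an additional equicontinuity argument (e.g.\ a Riesz--Kolmogorov-type estimate, which symmetrizations do in fact satisfy, but you would need to invoke it). You flag this yourself and correctly identify that the clean fix is to use the simultaneous convergence statement, which is what \cite{BLL} actually proves and what the paper implicitly relies on.
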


\begin{proof}(Sketch)
  Note that in the case $n=1$, \eqref{Multi-BLL-1} is just
  \eqref{cor-BLL-1}. We consider the case $n>1$.  Let $u_{i}\in
  \mathbb R^{\ell}$ be the rows of the matrix $A$.  Fix a direction
  $\theta\in S^{n-1}$ and let $ y_{1}, \ldots , y_{\ell}\in
  \theta^{\perp}$ the (unique) vectors such that $ x_{j} = y_{j} +
  t_{j} \theta$. Consider the function
$$ h_{i} ( \langle u_{i}, t \rangle ) := f_{i} \left(
  \sum_{j=1}^{\ell} a_{ij} (y_{j}+ t_{j} \theta)\right) , \ 1\ls i \ls
  N . $$ We defined the Steiner symmetral $f_{i}^{\ast} (\cdot |
  \theta ) = h_i^{\ast}$ in the direction $\theta$ in \S
  \ref{section:prelim}.  Then by Fubini's theorem we write each
  integral as an integral on $\theta^{\perp}$ and
  $[\theta]=\mathop{\rm span}\{\theta\}$, for each fixed $y_{1},
  \ldots , y_{\ell}$ we apply \eqref{cor-BLL-1} for the functions
  $h_{i}$ and the quasi-concave functions $F^{(k)}_{Y,
    \theta}$. (Recall the definition of Steiner concavity). Using
  Fubini's theorem again, we have proved that
$$ \int_{\mathbb R^{n}} \ldots \int_{\mathbb R^{n}} \prod_{k=1}^{M}
 F^{(k)} ( x_{1}, \ldots , x_{\ell}) \prod_{i=1}^{N} f_{i} \left(
 \sum_{j=1}^{\ell} a_{ij} x_{j}\right) d \mu(x_{\ell}) \ldots
 d\mu(x_{1} )\ls $$
\begin{equation}
\label{Multi-BLL-11}
 \int_{\mathbb R^{n}} \ldots \int_{\mathbb R^{n}} \prod_{k=1}^{M}
 F^{(k)} ( x_{1}, \ldots , x_{\ell}) \prod_{i=1}^{N} f_{i}^{\ast}
 \left( \sum_{j=1}^{\ell} a_{ij} x_{j} | \theta \right) d
 \mu(x_{\ell}) \ldots d\mu(x_{1}) .
\end{equation}
In \cite{BLL} it has been proved that the function $f^{\ast}$ can be
approximated (in the $L_{1}$ metric) by a suitable sequence of Steiner
symmetrizations. This leads to \eqref{Multi-BLL-1}.
\end{proof}

Let $F$ be a Steiner concave function. Notice that the function
$\tilde{F} := {\bf 1}_{\{ F>\alpha\}} $ is also Steiner
concave. Indeed, if $\theta \in S^{n-1}$ and $Y\subseteq
\theta^{\perp}$, notice that $\tilde{F}_{Y, \theta} (t) = 1 $ if and
only if $ F_{Y, \theta}(t) >\alpha$. Since $F$ is Steiner concave,
$\tilde{F}_{Y, \theta}$ is the indicator function of a symmetric
convex set. So $\tilde{F}$ is also Steiner concave. Thus we have the
following corollary.

\begin{corollary}
\label{cor-multi-BLL}
Let $F: \mathbb R^{n} \times \ldots \times \mathbb R^{n}\rightarrow \R_{+}$ be a Steiner
concave function and let $f_{i}:\mathbb R^{n}\rightarrow \mathbb
R_{+}$ be non-negative functions with $\norm{f_i}_1=1$ for $1\ls i \ls
N$. Let $\nu$ be the (product) probability measure defined on $\mathbb
R^{n}\times \ldots \times \mathbb R^{n}$ with density $\prod_i f_{i}$
and let $\nu^{\ast}$ have density $\prod_i f_{i}^{\ast}$.  Then for
each $\alpha>0$,
\begin{equation}
\label{cor-Multi-BLL-1}
\nu \left( \{ F (x_{1}, \ldots , x_{N}) > \alpha \} \right) \ls
\nu^{\ast} \left( \{ F (x_{1}, \ldots , x_{N} ) > \alpha \} \right) .
\end{equation}
 Moreover, if $G: \mathbb R^{n} \times \ldots \times \mathbb R^{n}\rightarrow \R_{+}$ is
 a Steiner convex function, then
\begin{equation}
\label{cor-Multi-BLL-2}
\nu \left( \{ G (x_{1}, \ldots , x_{N}) > \alpha \} \right) \gr
\nu^{\ast} \left( \{ G (x_{1}, \ldots , x_{N} ) > \alpha \} \right).
\end{equation}
\end{corollary}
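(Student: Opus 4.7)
The plan is to deduce both parts directly from Theorem \ref{MultiBLL} by encoding the events $\{F > \alpha\}$ and $\{G > \alpha\}$ as Steiner concave indicator functions. The crucial observation, already set up in the paragraph immediately preceding the corollary statement, is that if $F$ is Steiner concave then $\tilde F := \mathbf{1}_{\{F > \alpha\}}$ is again Steiner concave: for each $\theta \in S^{n-1}$ and $Y \subseteq \theta^{\perp}$, the section $F_{Y,\theta}$ is even and quasi-concave, so its superlevel set at level $\alpha$ is a symmetric convex subset of $[\theta]$, and hence $\tilde F_{Y,\theta}$ is even and quasi-concave.

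To prove \eqref{cor-Multi-BLL-1}, I would apply Theorem \ref{MultiBLL} with $\ell = N$, with $A$ the $N \times N$ identity matrix, $M = 1$, $F^{(1)} = \tilde F$, and Lebesgue measure in place of $\mu$. Under this choice the linear combinations $\sum_j a_{ij} x_j$ collapse to $x_i$, so the two sides of \eqref{Multi-BLL-1} become exactly $\nu(\{F > \alpha\})$ and $\nu^{\ast}(\{F > \alpha\})$. The only cosmetic issue is that Theorem \ref{MultiBLL} is stated for a rotationally invariant quasi-concave density rather than Lebesgue measure; I would handle this either by noting that the Steiner-symmetrization argument sketched there adapts verbatim to $d\mu = dx$, or by taking the approximating densities $d\mu_R := \mathbf{1}_{RB}\,dx$ and passing to the limit $R \to \infty$ by monotone convergence.

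For the Steiner convex statement \eqref{cor-Multi-BLL-2}, a dual observation applies: if $G$ is Steiner convex then each $G_{Y,\theta}$ is even and quasi-convex, so $\{G_{Y,\theta} \le \alpha\}$ is symmetric and convex, and hence $\mathbf{1}_{\{G \le \alpha\}}$ is Steiner concave. Applying the first part to this indicator gives $\nu(\{G \le \alpha\}) \le \nu^{\ast}(\{G \le \alpha\})$; since $\norm{f_i}_1 = \norm{f_i^{\ast}}_1 = 1$ for all $i$, both $\nu$ and $\nu^{\ast}$ are probability measures, so complementation yields \eqref{cor-Multi-BLL-2}.

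The only genuine content is the Steiner-concavity of the superlevel-set indicator, which the preceding paragraph of the excerpt has already established; everything else is a direct reading of Theorem \ref{MultiBLL}. The main (mild) obstacle I anticipate is the measure-theoretic alignment between the Lebesgue setting of the corollary and the rotationally invariant quasi-concave density assumed in Theorem \ref{MultiBLL}, but this is resolved by the routine approximation argument above.
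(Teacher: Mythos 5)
Your proposal is correct and follows essentially the same route as the paper: apply Theorem~\ref{MultiBLL} with $\mu$ Lebesgue, $\ell=N$, $A=I_N$, $M=1$ and $F^{(1)}=\mathbf{1}_{\{F>\alpha\}}$ (Steiner concave by the remark before the corollary), and for the convex case pass to the Steiner concave indicator $\mathbf{1}_{\{G\ls \alpha\}}$ and complement, exactly as the paper does by ``working with $1-\tilde F$.'' The measure-theoretic worry you flag is in fact vacuous --- Lebesgue measure has density $\equiv 1$, which is rotationally invariant and quasi-concave, so Theorem~\ref{MultiBLL} applies directly without any approximation.
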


\begin{proof} 
We apply \eqref{Multi-BLL-1} for $\mu$ the Lebesgue measure, $\ell=N$,
$A$ the identity matrix, $M=1$ and for the function $\tilde{F}$ (as
defined above). This proves \eqref{cor-Multi-BLL-1}. Working with the
function $1-\tilde{F}$ as in the proof of \eqref{cor-BLL-2} we get
\eqref{cor-Multi-BLL-2}.
\end{proof}


\subsection{Cartesian products of balls as extremizers}

In the last section, we discussed how in the presence of Steiner
concavity, one can replace densities by their symmetric decreasing
rearrangements. Among products of bounded, radial, decreasing
densities, the uniform measure on Cartesian products of balls arises
in extremal inequalities under several conditions and we discuss two
of them in this section.

We will say that a function $F:\R^n \times \ldots \times \R^n
\rightarrow \R_{+}$ is coordinate-wise decreasing if for any $x_{1},
\ldots , x_{N}\in \mathbb R^{n}$, and $ 0\ls s_{i}\ls t_{i}, 1\ls i
\ls N$,
\begin{eqnarray}
\label{coord-wise-decr}
F( s_{1} x_{1}, \ldots , s_{N} x_{N} ) \gr F ( t_{1} x_{1}, \ldots ,
t_{N} x_{N}) .
\end{eqnarray}

The next proposition can be proved by using Fubini's theorem
iteratively and Lemma \ref{unim-1} (as in \cite{CEFPP}).

\begin{proposition}
\label{pass-ball-1}
 Let $ F : (\mathbb R^{n})^N\rightarrow \mathbb R_{+}$ be a function
 that is coordinate-wise decreasing.  If $g_{1}, \ldots , g_{N}:
 \mathbb R^{n}\rightarrow \mathbb R_{+}$ are rotationally invariant
 densities with $ \max_{i\ls N} \|g_{i}\|_{\infty} \ls 1$, then
\begin{eqnarray}
\label{passball}
  \lefteqn{\int_{\mathbb R^{n}} \ldots \int_{\mathbb R^{n}} F( x_{1},
    \ldots , x_{N} ) \prod_{i=1}^{N} g_{i} (x_{i}) d x_{N} \ldots
    dx_{1}} \\& & \ls
 \int_{\mathbb R^{n}} \ldots \int_{\mathbb R^{n}} F( x_{1}, \ldots ,
 x_{N} ) \prod_{i=1}^{N} {\bf 1}_{\wtd{B}} (x_{i}) d x_{N} \ldots
 dx_{1}.
\end{eqnarray}
\end{proposition}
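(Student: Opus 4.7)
The plan is to peel off the variables one at a time via Fubini, reducing at each stage to a one-variable statement for a rotationally invariant density integrated against a function that is radially non-increasing along every ray from the origin, which is exactly the setup handled by Lemma \ref{unim-1}. After $N$ such steps, each $g_i$ has been replaced by ${\bf 1}_{\wtd{B}}$, yielding \eqref{passball}.

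First I would fix $x_2, \ldots, x_N$ and examine the inner integral $\int_{\R^n} F(y, x_2, \ldots, x_N)\, g_1(y)\, dy$. The hypothesis \eqref{coord-wise-decr} says that $r\mapsto F(r\theta, x_2, \ldots, x_N)$ is non-increasing on $[0,\infty)$ for every $\theta\in S^{n-1}$. Using the rotational invariance of $g_1$, I write $g_1(r\theta) = \tilde g_1(r)$ and pass to polar coordinates. Since $\|g_1\|_1 = 1$ and $\|g_1\|_\infty \ls 1$, one has $\tilde g_1 \ls 1$ and $\int_0^\infty \tilde g_1(r)\, r^{n-1}\, dr = (n\omega_n)^{-1}$. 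Applying Lemma \ref{unim-1}(1) to the non-increasing profile $r\mapsto F(r\theta, x_2, \ldots, x_N)$ with $\beta = (n\omega_n)^{-1}$, so that $(n\beta)^{1/n} = \omega_n^{-1/n}$ is precisely the radius of $\wtd{B}$, gives for every $\theta\in S^{n-1}$
$$\int_0^{\infty} F(r\theta, x_2, \ldots, x_N)\, \tilde g_1(r)\, r^{n-1}\, dr \ls \int_0^{\omega_n^{-1/n}} F(r\theta, x_2, \ldots, x_N)\, r^{n-1}\, dr.$$
Integrating in $\theta$ over $S^{n-1}$ and returning to Cartesian coordinates yields
$$\int_{\R^n} F(y, x_2, \ldots, x_N)\, g_1(y)\, dy \ls \int_{\R^n} F(y, x_2, \ldots, x_N)\, {\bf 1}_{\wtd{B}}(y)\, dy.$$

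I would then insert this pointwise (in $x_2, \ldots, x_N$) inequality back into the outer integral via Fubini, effectively replacing $g_1$ by ${\bf 1}_{\wtd{B}}$, and iterate the argument for $x_2, x_3, \ldots, x_N$ in succession. The one subtlety is confirming that coordinate-wise monotonicity in the remaining variables is preserved after each integration, i.e.\ that the function $(x_2, \ldots, x_N) \mapsto \int_{\R^n} F(x_1, \ldots, x_N)\, {\bf 1}_{\wtd{B}}(x_1)\, dx_1$ still satisfies \eqref{coord-wise-decr} in its $N-1$ variables; this follows immediately by integrating the pointwise inequality \eqref{coord-wise-decr} for $F$ against the non-negative function ${\bf 1}_{\wtd{B}}$. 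The same preservation occurs at every subsequent stage, so the induction closes and \eqref{passball} follows. I do not anticipate any serious obstacle beyond this routine bookkeeping.
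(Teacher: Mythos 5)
Your proof is correct and takes essentially the same route the paper indicates: the paper's own proof is given only as a one-line sketch ("Fubini's theorem iteratively and Lemma \ref{unim-1}"), and you have carried out precisely that plan, peeling off one variable at a time, applying Lemma \ref{unim-1}(1) per direction $\theta$ after passing to polar coordinates, and noting that coordinate-wise monotonicity is preserved under integration against a non-negative function so the induction closes.
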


Using Corollary \ref{Kanter-cor}, we get the following.

\begin{proposition}
  \label{prop:passball_Kanter}
  Let $ F : (\R^n)^N\rightarrow \mathbb R_{+}$ be quasi-concave and
  even.  If $g_{1}, \ldots , g_{N}:
 \mathbb R^{n}\rightarrow \mathbb R_{+}$ are rotationally invariant
 densities with $ \max_{i\ls N} \|g_{i}\|_{\infty} \ls 1$, then
  \begin{eqnarray}
    \label{passball_Kanter}
    \lefteqn{\int_{\mathbb R^{n}} \cdots \int_{\mathbb R^{n}} F( x_{1}, \ldots ,
    x_{N} ) \prod_{i=1}^{N} g_{i} (x_{i}) d x_{N} \ldots dx_{1}} \\ & & \ls 
    \int_{\mathbb R^{n}} \ldots \int_{\mathbb R^{n}} F( x_{1}, \ldots ,
    x_{N} ) \prod_{i=1}^{N} {\bf 1}_{\wtd{B}} (x_{i}) d x_{N} \ldots
    dx_{1}.  
  \end{eqnarray}
\end{proposition}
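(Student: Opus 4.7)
The plan is to reduce the integral inequality to a comparison of peaked measures on $(\R^n)^N \cong \R^{nN}$ and then invoke Lemma \ref{peaked-eq}. Concretely, let $\nu$ be the product probability measure on $\R^{nN}$ with density $\prod_{i=1}^N g_i$ and let $\nu_{\wtd B}$ be the product probability measure with density $\prod_{i=1}^N \mathbf 1_{\wtd B}$. The desired inequality reads
\[
 \int_{\R^{nN}} F(x_1,\ldots,x_N)\, d\nu(x_1,\ldots,x_N) \;\ls\; \int_{\R^{nN}} F(x_1,\ldots,x_N)\, d\nu_{\wtd B}(x_1,\ldots,x_N),
\]
and $F$ is, by hypothesis, a non-negative even quasi-concave function on the product space. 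By Lemma \ref{peaked-eq}, it therefore suffices to establish the peaked-measure comparison $\nu \prec \nu_{\wtd B}$, i.e.\ $\prod_{i=1}^N g_i \prec \prod_{i=1}^N \mathbf 1_{\wtd B}$.

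The key step is that this peaked comparison is exactly what Corollary \ref{Kanter-cor} provides. Each $g_i$ is a probability density on $\R^n$ bounded by $1$, and because $g_i$ is rotationally invariant one has $g_i^{\ast}=g_i$, so the conclusion $\prod_{i=1}^N g_i^{\ast} \prec \prod_{i=1}^N \mathbf 1_{\wtd B}$ of Corollary \ref{Kanter-cor} (general $n$ case) becomes $\prod_{i=1}^N g_i \prec \prod_{i=1}^N \mathbf 1_{\wtd B}$. Combined with Lemma \ref{peaked-eq} this yields the proposition. The only point that requires any care is the even, quasi-concave character of the test function $F$ on the full product space $\R^{nN}$, but this is a direct hypothesis of the proposition, so no further work is needed; unlike Proposition \ref{pass-ball-1}, no iterative Fubini argument or monotone rearrangement in individual coordinates is required, because the peakedness machinery built up in the previous sections handles the multidimensional comparison in one shot.
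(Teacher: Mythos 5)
Your overall route matches the paper's stated approach (which simply says ``Using Corollary~\ref{Kanter-cor}, we get the following''): reduce via Lemma~\ref{peaked-eq} to the peakedness comparison $\prod_{i}g_i \prec \prod_i \mathbf{1}_{\wtd B}$ on $(\R^n)^N$, and then invoke Corollary~\ref{Kanter-cor}. That reduction is exactly right. The genuine gap is the assertion that rotational invariance of $g_i$ forces $g_i^{\ast}=g_i$. The symmetric decreasing rearrangement $g_i^{\ast}$ is radially symmetric \emph{and radially nonincreasing}; a rotationally invariant density concentrated on an annulus, for instance, satisfies $g_i^{\ast}\neq g_i$. So what Corollary~\ref{Kanter-cor} actually gives you is $\prod g_i^{\ast}\prec \prod\mathbf{1}_{\wtd B}$, and the missing link $\prod g_i \prec \prod g_i^{\ast}$ on $(\R^n)^N$ is not automatic for $n\gr 2$: Proposition~\ref{prop-RBLL} is one-dimensional, Theorem~\ref{MultiBLL}/Corollary~\ref{cor-multi-BLL} require the test function to be Steiner concave (and a merely even, quasi-concave $F$ on $(\R^n)^N$ need not be, since the slice $F_{Y,\theta}$ is quasi-concave but generally fails to be even when $Y\neq 0$), and iterating Kanter's Theorem~\ref{Kanter} gets stuck because $g_i$ is not assumed unimodal.

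To close this as you have set it up, either strengthen the hypothesis to ``rotationally invariant and radially nonincreasing'' (equivalently $g_i=g_i^{\ast}$, which is what actually occurs in every application in the paper, since there $g_i$ is a rearrangement $f_i^{\ast}$), or supply an additional argument comparing $\prod g_i$ with $\prod g_i^{\ast}$. With that amendment your proof is correct and coincides with the paper's intended one; without it, the step $g_i^{\ast}=g_i$ is false as stated.
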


\section{Examples of Steiner concave and convex functions}
\label{section:Steiner}

As discussed in the previous section, the presence of Steiner
concavity (or convexity) allows one to prove extremal inequalities
when the extremizers are rotationally invariant.  The requisite
Steiner concavity is present for many functionals associated with
random structures.  As we will see, in many important cases, verifying
the Steiner concavity condition is not a routine matter but rather
depends on fundamental inequalities in convex geometry.  In this
section we give several non-trivial examples of Steiner concave (or
Steiner convex) functions and we describe the variety of tools that
are involved.

\subsection{Shadow systems and mixed volumes}

Shadow systems were defined by Shephard \cite{Shephard_shadow} and
developed by Rogers and Shephard \cite{RS}, and Campi and Gronchi,
among others; see, e.g., \cite{CCG_Sylvester}, \cite{CG},
\cite{CG_Lp}, \cite{CG_volume_product}, \cite{Saroglou} and the
references therein. Let $C$ be a closed convex set in $\R^{n+1}$. Let
$(e_1, \ldots, e_{n+1})$ be an orthonormal basis of $\R^{n+1}$ and
write $\R^{n+1}=\R^n\oplus\R e_{n+1}$ so that $\R^n=e_{n+1}^\bot$
. Let $\theta\in S^{n-1}$. For $t\in\R$ let $P_t$ be the projection
onto $\R^n$ parallel to $e_{n+1}-t\theta$: for $x\in \R^n$ and $s\in
\R$,
$$P_t(x+se_{n+1})= x + t s \theta.$$ Set $K_t=P_t C\subseteq \R^n$.
Then the family $(K_t)$ is a shadow system of convex sets, where $t$
varies in an interval on the real line. Shephard \cite{Shephard}
proved that for each $1\ls j\ls n$,
\begin{equation*}
  [0,1]\ni t \mapsto V_j(P_t C)
\end{equation*} 
is a convex function; see work of Campi and Gronchi, e.g.,
\cite{CG_survey}, \cite{CG_Lp} for further background and
references. Here we consider the following $N$-parameter variation,
which can be reduced to the one-parameter case.

\begin{proposition}
  \label{prop:shadow_one}
  Let $n, N$ be postive integers and $C$ be a compact convex set in
  $\R^{n}\times \R^N$. Let $\theta\in S^{n-1}\subseteq \R^n$. For
  $t\in\R^N$ and $(x,y)\in\R^n\times\R^N$, we define
  $P_t(x,y)=x+\langle y,t\rangle\theta$.  Then for all $1\ls j\ls n$,
  \begin{equation*}
    \R^N \ni t\mapsto V_j(P_t C)
  \end{equation*}is a convex function.
\end{proposition}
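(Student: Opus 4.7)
The plan is to reduce the $N$-parameter statement to Shephard's classical one-parameter result on shadow systems, which is recalled immediately before the proposition. Fix arbitrary $t_0, t_1 \in \R^N$ and set $t_s := (1-s)t_0 + s t_1$ for $s\in[0,1]$. I will exhibit a single compact convex body $\wtd{C}\subseteq \R^{n+1}=\R^n\oplus \R e_{n+1}$ whose Shephard-style projections $P_s\wtd{C}$ coincide with $P_{t_s}C$. Convexity of $s\mapsto V_j(P_s\wtd{C})$ then transfers to $s\mapsto V_j(P_{t_s}C)$ along the segment from $t_0$ to $t_1$; since $t_0,t_1$ are arbitrary, convexity on all of $\R^N$ follows.

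For the construction, define the linear map $\Phi:\R^n\times\R^N\to\R^{n+1}$ by
\begin{equation*}
\Phi(x,y) \;=\; \bigl(x+\langle y,t_0\rangle\theta\bigr) \;+\; \langle y,t_1-t_0\rangle\, e_{n+1},
\end{equation*}
with first $n$ coordinates $x+\langle y,t_0\rangle\theta\in\R^n$ and last coordinate $\langle y,t_1-t_0\rangle\in\R$. Set $\wtd{C}:=\Phi(C)$, which is compact and convex since $\Phi$ is linear and $C$ is compact convex. Using the formula $P_s(u+r e_{n+1})=u+sr\theta$ from the Shephard setup, a direct computation gives, for every $(x,y)\in C$,
\begin{equation*}
P_s\Phi(x,y) \;=\; (x+\langle y,t_0\rangle\theta) + s\langle y,t_1-t_0\rangle\theta \;=\; x+\langle y,t_s\rangle\theta \;=\; P_{t_s}(x,y),
\end{equation*}
so $P_s\wtd{C}=P_{t_s}C$. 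Shephard's theorem applied to $\wtd{C}$ then yields convexity of $s\mapsto V_j(P_s\wtd{C})=V_j(P_{t_s}C)$ on $[0,1]$, i.e.
\begin{equation*}
V_j\bigl(P_{(1-s)t_0+st_1}C\bigr)\;\ls\;(1-s)\,V_j(P_{t_0}C)+s\,V_j(P_{t_1}C),\qquad s\in[0,1],
\end{equation*}
which is the desired convexity of $t\mapsto V_j(P_tC)$.

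There is no serious obstacle: the whole argument is a linear-algebraic reduction, and the single point to verify is the identity $P_s\Phi=P_{t_s}$, which is immediate from bilinearity of $\langle y,\cdot\rangle$. Conceptually, $\Phi$ realizes $P_{t_0}C$ and $P_{t_1}C$ as the endpoint slices (in the $\theta$-direction) of one convex body in one additional dimension, and Shephard's projections interpolate along the segment $t_s$; this is the natural lift that makes the higher-dimensional Shephard phenomenon visible.
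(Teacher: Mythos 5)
Your proof is correct and takes essentially the same route as the paper's, which also restricts to a line $\lambda\mapsto t_0+\lambda(t_1-t_0)$ and invokes Shephard's one-parameter result; the paper's version is labelled a sketch and does not construct the lifted body $\wtd{C}=\Phi(C)\subseteq\R^{n+1}$ explicitly, whereas you do, which is the right way to fill in that gap.
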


\begin{proof} (Sketch) Fix $s$ and $t$ in $\R^N$.  It is sufficient to show that 
  \begin{equation*}
    [0,1]\ni \lambda \mapsto V_j(P_{s+\lambda (t-s)} C)
  \end{equation*} 
  is convex. Note that $\lambda \mapsto P_{s+\lambda(s-t)}C$ is a
  one-parameter shadow system and we can apply Shephard's result
  above; for an alternate argument, following Groemer \cite{Groemer},
  see \cite{PaoPiv_probtake}.
\end{proof}

\begin{corollary}
  \label{cor:shadow_one}
  Let $C$ be a compact convex set in $\R^N$. Then for all $1\ls j\ls n$,
  \begin{equation*}
    (\R^{n})^N \ni (x_1,\ldots,x_N)\mapsto V_j([x_1,\ldots,x_N]C)
  \end{equation*}
  is Steiner convex on $\R^N$. Moreover, if $C$ is $1$-unconditional
  then the latter function is coordinate-wise increasing analogous to
  definition \eqref{coord-wise-decr}.
\end{corollary}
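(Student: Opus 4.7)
The plan is to reduce the Steiner convexity statement to the multi-parameter shadow system result of Proposition~\ref{prop:shadow_one}, and then to derive the coordinate-wise monotonicity under $1$-unconditionality by a direct inclusion argument.

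First, for Steiner convexity, fix $\theta\in S^{n-1}$ and $Y=\{y_1,\ldots,y_N\}\subseteq\theta^\perp$; I must show that the function $t\mapsto V_j\bigl([y_1+t_1\theta,\ldots,y_N+t_N\theta]C\bigr)$ is even and quasi-convex on $\R^N$. The key step is to repackage the data into a single compact convex set in $\R^n\times\R^N$ on which Proposition~\ref{prop:shadow_one} can act. Define the linear map $L\colon\R^N\to\R^n\times\R^N$ by $L(c)=(\sum_{i=1}^N c_iy_i,\,c)$, and let $\tilde C:=L(C)$; this is compact and convex. Since each $y_i$ lies in $\theta^\perp$, for any $t\in\R^N$ one computes
\[
P_t\tilde C=\Bigl\{\sum_{i=1}^N c_iy_i+\langle c,t\rangle\theta:c\in C\Bigr\}=[y_1+t_1\theta,\ldots,y_N+t_N\theta]C.
\]
Proposition~\ref{prop:shadow_one} then yields the convexity, hence quasi-convexity, of $t\mapsto V_j(P_t\tilde C)$. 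For evenness, observe that $\sum c_iy_i\in\theta^\perp$, so replacing $t$ by $-t$ only flips the $\theta$-component of each point in $P_t\tilde C$; thus $P_{-t}\tilde C$ is the image of $P_t\tilde C$ under reflection across $\theta^\perp$, and intrinsic volumes are invariant under isometries.

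For the coordinate-wise claim, assume $C$ is $1$-unconditional, take $0\ls s_i\ls t_i$ with all $t_i>0$, and let $D$ be the diagonal matrix with entries $s_i/t_i\in[0,1]$. Then $[s_1x_1,\ldots,s_Nx_N]C=[t_1x_1,\ldots,t_Nx_N](DC)$. A standard fact I will use is that a $1$-unconditional convex set containing $0$ is closed under coordinate-wise contractions: for $c\in C$ and $\lambda_i\in[-1,1]$, the vector $(\lambda_1c_1,\ldots,\lambda_Nc_N)$ is a convex combination of the $2^N$ sign-reflections of $c$, all of which lie in $C$ by unconditionality. Hence $DC\subseteq C$, so $[s_1x_1,\ldots,s_Nx_N]C\subseteq[t_1x_1,\ldots,t_Nx_N]C$, and monotonicity of $V_j$ under set inclusion for convex bodies finishes the argument; the degenerate case where some $t_i=0$ follows by continuity of $V_j$ in the Hausdorff metric.

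The only real interpretive step, rather than a genuine obstacle, is recognizing the correct $\tilde C$ that converts the $N$-column matrix action on $C$ into a single-body shadow system $P_t\tilde C$ in $\R^n\times\R^N$; once that translation is in place, both quasi-convexity and evenness in each Steiner direction drop out, and the $1$-unconditional refinement reduces to inclusion plus monotonicity of intrinsic volumes.
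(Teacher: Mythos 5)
Your proof is correct and follows essentially the same route as the paper: the paper sets $\mathcal{C}=[y_1+e_{n+1},\ldots,y_N+e_{n+N}]C\subseteq\R^n\times\R^N$ (which equals your $\tilde C=L(C)$), notes $P_t\mathcal{C}=[y_1+t_1\theta,\ldots,y_N+t_N\theta]C$, invokes Proposition~\ref{prop:shadow_one}, and gets evenness from the reflection across $\theta^\perp$. The paper states the inclusion $[s_1x_1,\ldots,s_Nx_N]C\subseteq[t_1x_1,\ldots,t_Nx_N]C$ without proof, so your convex-combination-of-sign-reflections justification is a useful added detail but not a different argument.
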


\begin{proof}
  Let $\theta \in S^{n-1}$ and $y_{i}\in \theta^{\perp}$ for
  $i=1,\ldots,N$. Write $x_{i} = y_{i}+t_i\theta$. Let
  $\mathcal{C}=[y_{1}+e_{n+1},\ldots,y_{N}+e_{n+N}]C$. Then
  $\mathcal{C}$ is a compact convex set in $\R^n\times\R^N$ which is
  symmetric with respect to $\theta^\bot$ in $\R^{n+N}$ since
  $[y_{1}+e_{n+1},\ldots,y_{N}+e_{n+N}]C\subseteq \theta^{\perp}$. Let
  $P_t : \R^n\times\R^N\to\R^n$ be defined as in Proposition
  \ref{prop:shadow_one}. Then
  \begin{eqnarray*}
    P_t([y_{1}+e_{n+1},\ldots,y_{N}+e_{n+N}]C= [y_{1}+t_1\theta,
      \dots, y_{N}+t_N\theta]C.
  \end{eqnarray*}
  We apply the previous proposition to obtain the convexity claim. Now
  for each $\theta\in S^{n-1}$ and $y_1,\ldots,y_N\in \theta^{\perp}$,
  the sets $[y_1 +t_1\theta,\ldots,y_N+t_N\theta]C$ and $[y_1
    -t_1\theta,\ldots,y_N-t_N\theta]C$ are reflections of one another
  and so the evenness condition (for Steiner convexity) holds as well.
  The coordinate-wise monotonicity holds since one has the following
  inclusion when $C$ is $1$-unconditional: for $0\ls s_i \ls t_i$,
  \begin{equation*}
    [s_1x_1,\ldots,s_N x_N]C \subseteq [t_1 x_1,\ldots,t_N x_N]C.
  \end{equation*}
\end{proof}

\subsection{Dual setting}

Here we discuss the following dual setting involving the polar dual of
a shadow system.  Rather than looking at projections of a fixed
higher-dimensional convex set as in the previous section, this
involves intersections with subspaces. We will invoke a fundamental
inequality concerning sections of symmetric convex sets, known as
Busemann's inequality \cite{Busemann_slice}. This leads to a
randomized version of an extension of the Blaschke-Santal\'{o}
inequality to the class of convex measures (defined in \S
\ref{section:prelim}). For this reason we will need the following
extension of Busemann's inequality to convex measures from our joint
work with D. Cordero-Erausquin and M. Fradelizi \cite{CEFPP}; this
builds on work by Ball \cite{Ball_log}, Bobkov \cite{Bobkov}, Kim,
Yaskin and Zvavitch \cite{KYZ}).

\begin{theorem}
(Busemann Theorem for convex measures). Let $\nu$ be a convex measure
  with even density $\psi\in \mathbb R^{n}$. Then the function $\Phi$
  defined on $\mathbb R^{n}$ by $\Phi(0)=0$ and for $z\neq 0$,
$$ \Phi (z) = \frac{ \|z\|_{2} }{  \int_{z^{\perp}} \psi(x) d x } $$
is a norm.  
\end{theorem}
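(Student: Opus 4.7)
The plan is to verify each defining property of a norm, with the triangle inequality being the only nontrivial point. Positivity $\Phi(z)>0$ for $z\neq 0$ and positive homogeneity $\Phi(\lambda z) = |\lambda|\Phi(z)$ are immediate from the definition, since $\int_{z^{\perp}}\psi$ depends only on the hyperplane $z^\perp$ and is strictly positive (the convex measure $\nu$ is nonzero and $\psi$ is a density). Because $(-z)^\perp = z^\perp$, one has $\Phi(-z)=\Phi(z)$ for free. The triangle inequality $\Phi(z_1+z_2)\ls \Phi(z_1)+\Phi(z_2)$ is equivalent to convexity of the (automatically centrally symmetric) star body
$$K_{\psi} = \{z\in \R^n : \Phi(z)\ls 1\},$$
whose radial function is $\rho(\theta)=\int_{\theta^{\perp}}\psi(x)\,dx$ for $\theta\in S^{n-1}$.

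Since convexity of a star body is a property of its two-dimensional cross-sections through the origin, I fix an arbitrary $2$-plane $H\subseteq\R^n$ and aim to show $K_\psi\cap H$ is convex in $H$. For $\theta\in H\cap S^{n-1}$ decompose $\theta^\perp=(H\cap\theta^\perp)\oplus H^\perp$; by Fubini,
$$\rho(\theta) = \int_{H\cap\theta^\perp}\tilde\psi(h)\,dh, \qquad \tilde\psi(h):=\int_{H^\perp}\psi(h+y)\,dy.$$
The marginal $\tilde\psi$ inherits evenness from $\psi$, and a standard application of the Borell--Brascamp--Lieb inequality (i.e.\ closure of Borell's classes under marginalization) shows that $\tilde\psi$ defines a convex measure on $H\cong\R^2$; concretely, if $\psi$ is $(-1/n)$-concave then $\tilde\psi$ is $(-1/2)$-concave. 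This reduces the theorem to the planar statement: for every even convex-measure density $\tilde\psi$ on $\R^2$, the function $\theta\mapsto\int_{\theta^\perp}\tilde\psi$ is the radial function of a symmetric convex body in $\R^2$.

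The planar assertion is the heart of the proof and the main technical obstacle. Following Busemann's original strategy, convexity of the planar star body is recast as a concavity property of one-dimensional cross-sectional integrals of $\tilde\psi$ along pencils of lines through the origin: parametrizing such a pencil as $\ell_t$ with $t\in\R$ in a suitable way, one has to establish the appropriate Brunn-type concavity of $t\mapsto \int_{\ell_t}\tilde\psi$. For log-concave $\tilde\psi$ this goes back to Ball \cite{Ball_log}, and intermediate classes of convex measures were handled by Bobkov \cite{Bobkov} and Kim--Yaskin--Zvavitch \cite{KYZ}. The limiting case of all convex measures treated here is precisely what is proved in \cite{CEFPP}: one applies a Brunn--Minkowski-type inequality for convex measures to the sections of the graph region $\{(h,s)\in H\times\R : 0\ls s\ls \tilde\psi(h)\}$ by $2$-planes containing a fixed line through the origin, exploiting both the $(-1/2)$-concavity of $\tilde\psi$ and its evenness to control the sections symmetrically. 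Once this planar concavity is in hand, it yields the convexity of $K_\psi\cap H$ for every $H$, hence that of $K_\psi$, and thus $\Phi$ is a norm.
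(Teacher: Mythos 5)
The paper does not prove this theorem at all --- it states it, cites \cite{CEFPP}, and uses it --- so there is no in-paper argument to compare your attempt against; your proposal has to be judged on its own terms. The reductions you carry out are correct: the norm property is equivalent to convexity of the symmetric star body $K_\psi$; a symmetric star body is convex iff every $2$-dimensional central section is convex; the Fubini identity $\rho(\theta)=\int_{H\cap\theta^\perp}\tilde\psi$ with $\tilde\psi$ the marginal on $H$ is right; and by Borell's closure theorem $\tilde\psi$ is again a convex-measure density, hence $(-1/2)$-concave on $H\cong\R^2$.

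The problem is what is left. What remains is precisely the $n=2$ case of the theorem, and $(-1/2)$-concave is the extremal power in the plane, outside the ranges already handled by Bobkov and by Kim--Yaskin--Zvavitch (which need roughly $\gamma\ge -1/(n+1)$, i.e.\ $\gamma\ge -1/3$ in the plane). So you have isolated the hard case but not resolved it, and citing \cite{CEFPP} there is circular: that is exactly the reference the theorem is quoting. The one-sentence idea of applying a Brunn--Minkowski-type inequality to sections of the graph region $\{(h,s):0\le s\le\tilde\psi(h)\}$ is also not right as written --- for a $(-1/2)$-concave $\tilde\psi$ that subgraph is not a convex set (the convex object in this regime is the \emph{epi}graph of $\tilde\psi^{-1/2}$), so it is unclear what body a Brunn--Minkowski inequality should act on. A genuine proof needs a one-variable convexity lemma for integrals of $\tilde\psi$ along a pencil of rays, valid for densities with negative concavity power (the analogue of Ball's lemma for log-concave $f$), combined with Busemann's two-hyperplane slicing argument; that is the substantive content your write-up defers entirely to the reference and would need to supply.
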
 

The latter inequality is the key to the following theorem from
\cite{CEFPP} which extends the result of Campi-Gronchi
\cite{CG_volume_product} to the setting of convex measures; the
approach taken in \cite{CG_volume_product} was the starting point for
our work in this direction.

\begin{proposition}
  \label{prop:shadow_2}
  Let $\nu$ be a measure on $\mathbb R^{n}$ with a density $\psi$
  which is even and $\gamma$-concave on $\mathbb R^{n}$ for some
  $\gamma\gr -\frac{1}{ n+1}$. Let $ (K_{t}):= P_{t} C$ be an
  $N$-parameter shadow system of origin symmetric convex sets with
  respect to an origin symmetric body $C\subseteq \mathbb R^{n}\times
  \mathbb R^{N}$. Then the function $\R^N\ni t\mapsto
  \nu(K_{t}^{\circ})^{-1}$ is convex.
\end{proposition}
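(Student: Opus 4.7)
The plan is to reduce the $N$-parameter statement to a classical one-parameter shadow system in $\R^n$, realize the polar $K_\lambda^{\circ}$ as a central hyperplane section of a fixed convex body in $\R^{n+1}$, and then invoke the Busemann theorem for convex measures in dimension $n+1$ at the appropriate normal vector.

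First I would reduce to one parameter: since convexity on $\R^N$ is equivalent to convexity along every affine line, fix $t_0,t_1\in \R^N$ and set $t(\lambda)=(1-\lambda)t_0+\lambda t_1$. The linear image
\[ \widetilde{C}=\{(x+\langle y,t_0\rangle\theta,\,\langle y,t_1-t_0\rangle):(x,y)\in C\}\subseteq \R^n\times \R \]
is origin-symmetric (since $C$ is), and satisfies $K_{t(\lambda)}=\pi_\lambda(\widetilde{C})$, where $\pi_\lambda(\xi,r)=\xi+\lambda r\theta$. Thus $(K_{t(\lambda)})_\lambda$ is a classical one-parameter shadow system with direction $\theta$ generated by $\widetilde{C}$. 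A direct support-function computation then gives the key duality identity
\[ K_\lambda^{\circ}=\{v\in\R^n:(v,\lambda\langle v,\theta\rangle)\in \widetilde{C}^{\circ}\}, \]
so that via the graph map $v\mapsto(v,\lambda\langle v,\theta\rangle)$ the polar $K_\lambda^{\circ}$ is in bijection with the central hyperplane section $\widetilde{C}^{\circ}\cap H_\lambda$, where $H_\lambda=\{(v,r)\in\R^{n+1}:r=\lambda\langle v,\theta\rangle\}$.

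Next I would lift the density by setting $\widetilde{\psi}(v,r)=\psi(v)\mathds{1}_{\widetilde{C}^{\circ}}(v,r)$. The trivial extension of a $\gamma$-concave density from $\R^n$ to $\R^{n+1}$ remains $\gamma$-concave, and multiplying by the $\infty$-concave indicator $\mathds{1}_{\widetilde{C}^{\circ}}$ preserves $\gamma$-concavity; evenness of $\psi$ together with origin-symmetry of $\widetilde{C}^{\circ}$ ensures $\widetilde{\psi}$ is even. By Borell's classification in dimension $n+1$, the assumption $\gamma\gr -1/(n+1)$ is precisely what makes the corresponding measure $\widetilde{\nu}$ on $\R^{n+1}$ a convex measure, so the Busemann theorem applies: $\widetilde{\Phi}(\xi)=\norm{\xi}_2/\int_{\xi^{\perp}}\widetilde{\psi}\,dS$ is a norm on $\R^{n+1}$. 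Applied at $\xi_\lambda=(\lambda\theta,-1)$, whose orthogonal complement equals $H_\lambda$, the graph parameterization contributes a Jacobian $\sqrt{1+\lambda^2}$, yielding $\int_{\xi_\lambda^{\perp}}\widetilde{\psi}\,dS=\sqrt{1+\lambda^2}\,\nu(K_\lambda^{\circ})$; since $\norm{\xi_\lambda}_2=\sqrt{1+\lambda^2}$, one gets $\widetilde{\Phi}(\xi_\lambda)=1/\nu(K_\lambda^{\circ})$. Because $\lambda\mapsto\xi_\lambda$ is affine and any norm restricted to an affine line is convex, $\lambda\mapsto 1/\nu(K_\lambda^{\circ})$ is convex.

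The main obstacle is locating the correct lift to $\R^{n+1}$ so that Busemann's theorem applied there outputs exactly $1/\nu(K_\lambda^{\circ})$. This is also what forces the hypothesis $\gamma\gr -1/(n+1)$ rather than the weaker $\gamma\gr -1/n$ that would only make $\nu$ itself a convex measure on $\R^n$: the lifted density $\widetilde{\psi}$ has to correspond to a convex measure on $\R^{n+1}$, and $-1/(n+1)$ is the sharp Borell exponent there. Degenerate situations (such as $K_\lambda$ with empty interior or $\nu(K_\lambda^{\circ})=\infty$) can be handled by standard approximation together with the convention $1/\infty=0$.
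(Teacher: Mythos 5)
Your proof is correct and follows the same Campi--Gronchi-style strategy that underlies the cited result from \cite{CEFPP}: reduce to a one-parameter shadow system, realize $K_\lambda^{\circ}$ as a central hyperplane section of the fixed body $\widetilde{C}^{\circ}\subseteq\R^{n+1}$, and invoke the Busemann theorem for convex measures (the very theorem the paper states just beforehand as ``the key'' ingredient). The computations (the duality identity $K_\lambda^{\circ}=\{v:(v,\lambda\langle v,\theta\rangle)\in\widetilde{C}^{\circ}\}$, the Jacobian $\sqrt{1+\lambda^2}$ cancelling $\norm{\xi_\lambda}_2$, and the Borell exponent $-1/(n+1)$ in dimension $n+1$) all check out.
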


This result and the assumption on the symmetries of $C$ and $\nu$
leads to the following corollary. The proof is similar to that given
in \cite{CEFPP}.

\begin{corollary}
  \label{cor:shadow_two}
  Let $r\gr 0$, $C$ be an origin-symmetric convex set in $\R^N$.  Let
  $\nu$ be a radial measure on $\R^n$ with a density $\psi$ which is
  $-1/(n+1)$-concave on $\R^n$. Then the function $$ G( x_{1}, \ldots
  , x_{N}) = \nu ( ( [x_{1}\ldots x_{N}] C + r B_{2}^{N} )^{\circ}
  ) $$ is Steiner concave. Moreover if $C$ is $1$-unconditional then
  the function $G$ is coordinate-wise decreasing.
\end{corollary}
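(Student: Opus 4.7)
The plan is to reduce the statement to Proposition \ref{prop:shadow_2} by embedding the configuration into a shadow system in $\R^n \times \R^N$, mirroring the template used in Corollary \ref{cor:shadow_one}. Fix $\theta \in S^{n-1}$ and $y_1, \ldots, y_N \in \theta^{\perp}$; I need to show that
\[
G_{Y,\theta}(t) := G(y_1 + t_1\theta, \ldots, y_N + t_N\theta), \qquad t \in \R^N,
\]
is even and quasi-concave (here I read the $rB_2^N$ in the statement as the dimensionally correct $rB_2^n$).

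For the quasi-concavity, set
\[
\mathcal{C} := [y_1 + e_{n+1}, \ldots, y_N + e_{n+N}]C + (rB_2^n \times \{0\}) \subseteq \R^n \times \R^N,
\]
where $e_{n+1},\ldots,e_{n+N}$ is the standard basis of the $\R^N$ factor, and let $P_t$ be the projection from Proposition \ref{prop:shadow_one}. A direct computation gives $P_t\mathcal{C} = [y_1 + t_1\theta, \ldots, y_N + t_N\theta]C + rB_2^n$. Since $C$ is origin symmetric, the relation $-\sum c_i(y_i + e_{n+i}) = \sum(-c_i)(y_i + e_{n+i})$ together with $-C = C$ shows that $\mathcal{C}$ is origin symmetric, and the same argument shows each $P_t\mathcal{C}$ is origin symmetric. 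Because $\psi$ is even (being radial) and $-1/(n+1)$-concave, Proposition \ref{prop:shadow_2} applies and gives that $t \mapsto \nu((P_t\mathcal{C})^\circ)^{-1}$ is convex on $\R^N$. This is exactly $1/G_{Y,\theta}$; convexity of the reciprocal (together with positivity) makes the super-level sets $\{G_{Y,\theta} > \alpha\}$ convex, so $G_{Y,\theta}$ is quasi-concave.

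For evenness, note that $[y_1 - t_1\theta, \ldots, y_N - t_N\theta]C$ is the image of $[y_1 + t_1\theta, \ldots, y_N + t_N\theta]C$ under the orthogonal reflection $R$ across $\theta^{\perp}$, since $y_i \in \theta^{\perp}$ and only the $\theta$-components flip sign. Polarity commutes with orthogonal maps, the ball $rB_2^n$ is $R$-invariant, and $\nu$ is radial hence $R$-invariant; these combine to give $G_{Y,\theta}(-t) = G_{Y,\theta}(t)$. Together with the previous step this establishes that $G$ is Steiner concave.

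For the coordinate-wise monotonicity when $C$ is $1$-unconditional, the inclusion $[s_1 x_1, \ldots, s_N x_N]C \subseteq [t_1 x_1, \ldots, t_N x_N]C$ for $0 \ls s_i \ls t_i$ (exactly as at the end of the proof of Corollary \ref{cor:shadow_one}) transfers, after adding $rB_2^n$ to both sides, passing to polars (which reverses inclusions), and using monotonicity of the measure $\nu$, into $G(s_1 x_1, \ldots, s_N x_N) \gr G(t_1 x_1, \ldots, t_N x_N)$. The real difficulty is already absorbed by Proposition \ref{prop:shadow_2}, whose proof relies on Busemann's inequality for convex measures; the only subtle point here is to set up $\mathcal{C}$ so that origin-symmetry, not merely $\theta^{\perp}$-symmetry as in Corollary \ref{cor:shadow_one}, is available, which is precisely where the hypothesis $C = -C$ enters.
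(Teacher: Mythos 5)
Your proof is correct and follows exactly the route the paper intends: the paper defers to \cite{CEFPP}, but the intended argument is precisely to mirror Corollary \ref{cor:shadow_one} by lifting to an $N$-parameter shadow system in $\R^n\times\R^N$ and invoking Proposition \ref{prop:shadow_2}, and that is what you do, correctly noting along the way the typo $rB_2^N \leadsto rB_2^n$ and the point that origin-symmetry of $C$ is what gives origin-symmetry of the ambient set $\mathcal{C}$ (which Proposition \ref{prop:shadow_2} needs, in contrast to the mere $\theta^\perp$-symmetry that sufficed in Corollary \ref{cor:shadow_one}). The only bookkeeping remark worth adding is that $\mathcal{C}$ is origin-symmetric because it is the Minkowski sum of the origin-symmetric linear image of $C$ and the origin-symmetric set $rB_2^n\times\{0\}$.
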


\begin{remark} 
  The present setting is limited to origin-symmetric convex
  bodies. The argument of Campi and Gronchi \cite{CG_volume_product}
  leading to the Blaschke-Santal\'{o} inequality has been extended to
  the non-symmetric case by Meyer and Reisner in
  \cite{MeyerReisner}. It would be interesting to see an asymmetric
  version for random sets as it would give an empirical form of the
  Blaschke-Santal\'{o} inequality and related inequalities,
  e.g,. \cite{HabSch} in the asymmetric case.
\end{remark}

\subsection{Minkowski addition and extensions}
\label{sub:Madd}

In this section, we recall several variations of Minkowski addition
that are the basis of $L_p$-Brunn-Minkowski theory, $p\gr 1$, and its
extensions. $L_p$-addition as originally defined by Firey \cite{Firey}
of convex sets $K$ and $L$ with the origin in their interior is given
by
\begin{equation*}
  h_{K+_p L}^p(x) = h_K^p(x) + h_L^p(x).
\end{equation*}
The $L_p$-Brunn-Minkowski inequality of Firey states that
\begin{equation}
  \label{eqn:LpFirey}
V_n(K+_p L)^{1/n} \gr V_n(K)^{1/n}+ V_n(L)^{1/n}.
\end{equation}
A more recent pointwise definition that applies to compact sets $K$
and $L$ is due to Lutwak, Yang and Zhang \cite{LYZ_Firey}
\begin{equation}
  \label{eqn:Lp_pointwise}
  K+_p L = \{(1-t)^{1/q}+t^{1/q}y:x\in K, y\in L, 0\ls t\ls 1\},
\end{equation}
where $1/p+1/q=1$; they proved that with the latter definition
\eqref{eqn:LpFirey} extends to compact sets.

A general framework incorporating the latter as well as more general
notions in the Orlicz setting initiated by Lutwak, Yang and Zhang
\cite{LYZ_Orlicz}, \cite{LYZ_Orlicz_projection}, was studied by
Gardner, Hug and Weil \cite{GHW_JEMS}, \cite{GHW_Orlicz}.  Let $M$ be
an arbitrary subset of $\R^m$ and define the {\it $M$-combination}
$\oplus_M (K^1,\ldots,K^m)$ of arbitrary sets $K^1,\ldots, K^m$ in
$\R^n$ by
\begin{eqnarray*}
\oplus_M(K^1,\ldots,K^m) &=&\left\{\sum_{i=1}^m a_i x^{(i)}:
x^{(i)}\in K^i, (a_1,\ldots,a_m)\in M\right\}\\ & = &
\bigcup_{(a_i)\in M} (a_1 K^1+\ldots+a_m K^m).
\end{eqnarray*}

Gardner, Hug, and Weil \cite{GHW_JEMS} develop a general framework for
addition operations on convex sets which model important features of
the Orlicz-Brunn-Minkowski theory.  The notion of $M$-addition is
closely related to linear images of convex sets in this paper. In
particular, if $C=M$ and $K^1=\{x_1\}, \ldots, K^m = \{x_m\}$, where
$x_1,\ldots,x_m\in \R^n$, then $[x_1,\ldots,x_m]C = \oplus_M(\{x_1\},
\ldots,\{x_m\})$.

As a sample result we mention just the following from \cite{GHW_JEMS} (see
Theorem 6.1 and Corollary 6.4).

\begin{theorem} 
\label{thm:Msum}
Let $M$ be a convex set in $\R^m$, $m\gr 2$.
  \begin{itemize}
  \item[i.] If $M$ is contained in the positive orthant and
    $K^1,\ldots,K^m$ are convex sets in $\R^n$, then
    $\oplus_M(K^1,\ldots,K^m)$ is a convex set.
\item[ii.] If $M$ is $1$-unconditional and $K^1,\ldots,K^m$ are
  origin-symmetric convex sets, then $\oplus_M(K^1,\ldots,K^m)$ is an
  origin symmetric convex set.
  \end{itemize}
\end{theorem}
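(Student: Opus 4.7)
The plan is to verify both assertions directly from the definition
\[
\oplus_M(K^1,\ldots,K^m) = \Bigl\{\sum_{i=1}^m a_i x^{(i)} : (a_1,\ldots,a_m)\in M,\ x^{(i)}\in K^i\Bigr\},
\]
by manipulating representations of a typical element. The core difficulty is that the map $(a, x^{(1)},\ldots,x^{(m)})\mapsto \sum a_i x^{(i)}$ is \emph{bilinear}, not linear, so convexity of $\oplus_M(K^1,\ldots,K^m)$ does not come for free from the convexity of $M$ and the $K^i$; one must exploit positivity (in (i)) or a symmetry--absorption trick (in (ii)) to absorb the $a_i$'s into the convex combination correctly.

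For part (i), I would take two elements $z=\sum a_i x^{(i)}$ and $z'=\sum a_i' y^{(i)}$ of $\oplus_M(K^1,\ldots,K^m)$, a scalar $\lambda\in[0,1]$, and define $b_i = \lambda a_i + (1-\lambda)a_i'$. Convexity of $M$ gives $(b_1,\ldots,b_m)\in M$, and because $M\subseteq[0,\infty)^m$ the weights $\lambda a_i/b_i$ and $(1-\lambda)a_i'/b_i$ are nonnegative and sum to $1$ whenever $b_i>0$, so
\[
w^{(i)} := \frac{\lambda a_i}{b_i}\,x^{(i)} + \frac{(1-\lambda)a_i'}{b_i}\,y^{(i)} \in K^i
\]
by convexity of $K^i$, giving $\lambda z + (1-\lambda)z' = \sum b_i w^{(i)} \in \oplus_M(K^1,\ldots,K^m)$. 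The only bookkeeping point is that if $b_i=0$ then $a_i=a_i'=0$ (since both are nonnegative), so the $i$-th summand contributes $0$ and $w^{(i)}$ may be chosen arbitrarily in $K^i$.

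For part (ii), I would first show that $\oplus_M(K^1,\ldots,K^m)=\oplus_{M_+}(K^1,\ldots,K^m)$, where $M_+ := M\cap[0,\infty)^m$. Given $z=\sum a_i x^{(i)}$ with $(a_i)\in M$, set $\tilde{x}^{(i)} := \operatorname{sgn}(a_i)\,x^{(i)}$. Origin-symmetry of each $K^i$ gives $\tilde{x}^{(i)}\in K^i$, while $1$-unconditionality of $M$ gives $(|a_1|,\ldots,|a_m|)\in M$, hence in $M_+$. Since $z=\sum|a_i|\tilde{x}^{(i)}$, we obtain the claimed equality, and $M_+$ is convex as the intersection of two convex sets, so part (i) delivers convexity of $\oplus_M(K^1,\ldots,K^m)$. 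Origin-symmetry of $\oplus_M(K^1,\ldots,K^m)$ is immediate from $-z = \sum a_i(-x^{(i)})$ together with $-x^{(i)}\in K^i$. The crux throughout is the rescaling identity that turns $\lambda a_i x^{(i)} + (1-\lambda)a_i' y^{(i)}$ into $b_i$ times a convex combination; once sign issues are removed via the symmetry assumptions of (ii), this step proceeds as in (i).
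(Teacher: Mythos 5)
The paper quotes this result from Gardner, Hug and Weil \cite{GHW_JEMS} (their Theorem 6.1 and Corollary 6.4) without reproducing a proof, so there is no in-paper argument to compare against. Your proof is correct and is the natural direct argument: in (i), the rescaling $b_i=\lambda a_i+(1-\lambda)a_i'$ together with absorbing the weights $\lambda a_i/b_i$ and $(1-\lambda)a_i'/b_i$ into each $K^i$ by convexity (with the correct handling of $b_i=0$ using nonnegativity) is exactly what is needed; and in (ii), reducing to $M_+:=M\cap[0,\infty)^m$ via sign-absorption into the origin-symmetric $K^i$'s and then invoking (i) is clean and efficient, with origin-symmetry of the output immediate. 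One very minor bookkeeping point: when $a_i=0$ you set $\tilde x^{(i)}=\sgn{a_i}\,x^{(i)}=0$, which requires $0\in K^i$; this does hold for any nonempty origin-symmetric convex $K^i$ (take $\tfrac12 x+\tfrac12(-x)$), but it is slightly cleaner to just keep $\tilde x^{(i)}:=x^{(i)}$ when $a_i=0$, since that term contributes $0$ either way.
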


For several examples we mention the following:

\begin{itemize}
  \item[(i)] If $M=\{(1,1)\}$ and $K^1$ and $K^2$ are convex sets, then
    $K^1\oplus_M K^2 = K_1+K_2$, i.e., $\oplus_M$ is the usual
    Minkowski addition.
    \item[(ii)] If $M = B_q^N$ with $1/p+1/q=1$, and $K^1$ and $K^2$
      are origin symmetric convex bodies, then $K^1\oplus_M K^2 =
      K^1+_p K^2$, i.e., $\oplus_M$ corresonds to $L_p$-addition as in
      \eqref{eqn:Lp_pointwise}.
    \item[(iii)] There is a close connection between Orlicz addition
      as defined in \cite{LYZ_Orlicz}, \cite{LYZ_Orlicz_projection}
      and $M$-addition, as shown in \cite{GHW_Orlicz}. In fact, we
      define Orlicz addition in terms of the latter as it interfaces
      well with our operator approach. As an example, let
      $\psi:[0,\infty)^2\rightarrow [0,\infty)$ be convex, increasing
          in each argument, and $\psi(0,0)=0$,
          $\psi(1,0)=\psi(0,1)=1$. Let $K$ and $L$ be origin-symmetric
          convex bodies and let $M = B_{\psi}^{\circ}$, where
          $B_{\psi}=\{(t_1,t_2)\in[-1,1]^2:\psi(\abs{t_1},\abs{t_2})\ls
          1\}$.  Then we define $K+_{\psi} L$ to be $K\oplus_M L$.
\end{itemize}

Let $N_1, \ldots,N_m$ be positive integers.  For each $i=1,\ldots,m$,
consider collections of vectors $\{x_{i1},\ldots,x_{iN_i}\}\subseteq
\R^n$ and let $C_1,\ldots,C_m$ be compact, convex sets with $C_i \subseteq
\R^{N_i}$. Then for any $M\subseteq \R^{N_1+\ldots+N_m}$,
\begin{eqnarray*}
  \lefteqn{\oplus_M([x_{11},\ldots,x_{1N_1}]C_1, \ldots,
    [x_{m1},\ldots,x_{mN_m}]C_m)}\\ & & = \left\{\sum_{i=1}^m a_i
  \left(\sum_{j=1}^{N_i} c_{ij}x_{ij}\right): (a_i)_i\in M,
  (c_{ij})_j\in C_i\right\}\\ & & = \left\{\sum_{i=1}^m\sum_{j=1}^{N_i}
  a_i c_{ij}x_{ij}: (a_i)_i\in M, (c_{ij})_j\in C_i\right\}\\ & & =
  [x_{11},\ldots,x_{1N_1},\ldots,x_{m1},\ldots,x_{mN_m}]
  (\oplus_M(C_1',\ldots,C_m')),
\end{eqnarray*}
where $C_i'$ is the natural embedding of $C_i$ into
$\R^{N_1+\ldots+N_m}$.  Thus the $M$-combination of families of sets
of the form $[x_{i1},\ldots,x_{iN_i}]C_i$ fits exactly in the
framework considered in this paper.  In particular, if $M$ is compact,
convex and satisfies either of the assumptions of Theorem
\ref{thm:Msum}, then the $j$-th intrinsic volume of the latter set is
a Steiner convex function by Corollary \ref{cor:shadow_one}.

For subsequent reference we note one special case of the preceding
identities.  Let $C_1=\conv\{e_1,\ldots,e_{N_1}\}$ and $C_2
=\conv\{e_1,\ldots,e_{N_2}\}$. Then we identify $C_1$ with
$C'_1=\conv\{e_1,\ldots,e_{N_1}\}$ in $\R^{N_1+N_2}$, $C_2$ with
$C_2' = \conv\{e_{N_1+1},\ldots,e_{N_1+N_2}\}$ in $\R^{N_1+N_2}$.
If $x_1,\ldots,x_{N_1}$, $x_{N_1+1},\ldots,x_{N_1+N_2}\in \R^n$, then
\begin{eqnarray*}
  \lefteqn{\conv\{x_1,\ldots,x_{N_1}\}
    \oplus_{M}\conv\{x_{N_1+1},\ldots,x_{N_1+N_2}\}} \\ & & =
          [x_1,\ldots,x_{N_1}]C_1 \oplus_{M}
          [x_{N_1+1},\ldots,x_{N_1+N_2}]C_2 \\ & & =
          [x_1,\ldots,x_{N_1},x_{N_1+1},\ldots,x_{N_1+N_2}](C_1'\oplus_{M}C_2').
\end{eqnarray*}  This will be used in \S \ref{section:iso}.

\subsection{Unions and intersections of Euclidean balls}

Here we consider Euclidean balls $B(x_i,R)=\{x\in \R^n:|x-x_i|\ls r\}$
of a given radius $r>0$ with centers $x_1,\ldots,x_N\in
\R^n$. 

\begin{theorem}
  For each $1\ls j \ls n$, the function
  \begin{equation}
    \label{eqn:ballpoly}
    (\R^n)^N \ni (x_1,\ldots,x_N) \mapsto V_j\left(\bigcap_{i=1}^N
    B(x_i,r)\right)
  \end{equation} 
  is Steiner concave. Moreover, it is quasi-concave and even on
  $(\R^{n})^N$.
\end{theorem}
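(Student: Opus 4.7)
The plan is to exploit the translation structure of equal-radius balls: since $B(z,r) = z + rB$ and $(1-\lambda)rB + \lambda rB = rB$ by convexity of $B$, one has the exact identity
\[
(1-\lambda)B(x,r) + \lambda B(y,r) = B\bigl((1-\lambda)x + \lambda y,\, r\bigr) \qquad (\lambda \in [0,1]).
\]
Combined with the elementary inclusion $\bigcap_i \bigl((1-\lambda)A_i + \lambda B_i\bigr) \supseteq (1-\lambda)\bigcap_i A_i + \lambda \bigcap_i B_i$ (which holds for any families $(A_i),(B_i)$), this yields
\[
\bigcap_{i=1}^N B\bigl((1-\lambda)x_i + \lambda y_i,\, r\bigr) \supseteq (1-\lambda)\bigcap_{i=1}^N B(x_i,r) + \lambda \bigcap_{i=1}^N B(y_i,r).
\]
Applying monotonicity of $V_j$ followed by the Brunn-Minkowski inequality for intrinsic volumes, $V_j(A+B)^{1/j} \gr V_j(A)^{1/j} + V_j(B)^{1/j}$ (see \cite{Schneider_book_ed2}), I would conclude that $(x_1,\ldots,x_N) \mapsto V_j\bigl(\bigcap_i B(x_i,r)\bigr)$ is in fact $1/j$-concave on $(\R^n)^N$, hence quasi-concave. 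Evenness on $(\R^n)^N$ is immediate, since $\bigcap_i B(-x_i,r) = -\bigcap_i B(x_i,r)$ and $V_j$ is invariant under reflection through the origin.

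For the Steiner concavity statement, fix $\theta \in S^{n-1}$ and $y_1,\ldots,y_N \in \theta^{\perp}$, and consider $G_{Y,\theta}(t_1,\ldots,t_N) = V_j\bigl(\bigcap_i B(y_i + t_i\theta, r)\bigr)$. Quasi-concavity in $t \in \R^N$ is inherited from the global quasi-concavity proved above by restricting to the affine slice $\{(y_1+t_1\theta,\ldots,y_N+t_N\theta) : t \in \R^N\} \subseteq (\R^n)^N$. For the required evenness in $t$, let $R_\theta$ denote the orthogonal reflection of $\R^n$ through $\theta^{\perp}$; since each $y_i \in \theta^{\perp}$ one has $R_\theta(y_i + t_i\theta) = y_i - t_i\theta$, and because $R_\theta$ is an isometry it carries $\bigcap_i B(y_i + t_i\theta, r)$ congruently onto $\bigcap_i B(y_i - t_i\theta, r)$, so $V_j$ is preserved.

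The only non-elementary ingredient is the Brunn-Minkowski inequality for intrinsic volumes, which I would invoke as a standard fact. Unlike the shadow-system examples earlier in this section, no Steiner symmetrization, shadow system, or Busemann-type inequality is required here: equal-radius balls are parametrized linearly by their centers, and in this rigid setting Minkowski summation interacts particularly well with intersection. Accordingly I do not foresee a genuine obstacle, and the argument actually gives something stronger than Steiner concavity, namely joint $1/j$-concavity on $(\R^n)^N$.
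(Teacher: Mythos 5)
Your proof is correct and follows the same route as the paper: both establish the inclusion $\bigcap_{i=1}^N B\left((1-\lambda)x_i + \lambda y_i, r\right) \supseteq (1-\lambda)\bigcap_{i=1}^N B(x_i, r) + \lambda \bigcap_{i=1}^N B(y_i, r)$ and then apply the concavity of $K \mapsto V_j(K)^{1/j}$ (equivalently, the Brunn--Minkowski inequality for intrinsic volumes, a consequence of the Alexandrov--Fenchel inequalities). You merely make explicit the passage from global $1/j$-concavity plus the reflection symmetry to Steiner concavity, which the paper's sketch leaves implicit.
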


\begin{proof}
  Let $F$ be the function in \eqref{eqn:ballpoly}.  Let ${\bf u}
  =(u_1,\ldots,u_N)\in (\R^n)^N$ and ${\bf v} = (v_1,\ldots,v_N)\in
  (\R^n)^N$ belong to the support of $F$.  One checks the following
  inclusion,
  \begin{eqnarray*}
    \bigcap_{i=1}^N B\left(\frac{u_i+v_i}{2}, r_i\right) \supseteq
    \frac{1}{2}\bigcap_{i=1}^N B(u_i, r_i) +
    \frac{1}{2}\bigcap_{i=1}^N B(v_i, r_i),
  \end{eqnarray*} 
and then applies the concavity of $K\mapsto V_j(K)^{1/j}$, which is a
consequence of the Alexandrov-Fenchel inequalities.
\end{proof}

\begin{remark}
  The latter theorem is also true when $V_j$ is replaced by a function
  which is monotone with respect to inclusion, rotation-invariant and
  quasi-concave with respect to Minkowski addition; see
  \cite{PaoPiv_ball}.
\end{remark}

The latter can be compared with the following result for the convex
hull of unions of Euclidean balls.

\begin{theorem}
The function 
    $$(\R^n)^N \ni (x_1,\ldots,x_N) \mapsto V_j\left(\mathop{\rm
  conv}\left(\bigcup_{i=1}^N B(x_i,r)\right)\right)$$ is Steiner
convex.
\end{theorem}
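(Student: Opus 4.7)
The plan is to mimic the proof of Corollary \ref{cor:shadow_one} via a shadow system, once the set is rewritten in a suitable form. The crucial identity is
\[
  \conv\Bigl(\bigcup_{i=1}^N B(x_i,r)\Bigr)
  \;=\; \conv\{x_1,\ldots,x_N\} + rB,
\]
which holds because the right-hand side is convex and contains each ball $B(x_i,r)=x_i+rB$, while any point $\sum_i \lambda_i x_i + rb$ in it equals $\sum_i \lambda_i(x_i+rb)$, a convex combination of points from $\bigcup_i B(x_i,r)$.

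Now fix $\theta\in S^{n-1}$ and $y_1,\ldots,y_N\in\theta^{\perp}$, and write $x_i=y_i+t_i\theta$. To verify Steiner convexity I need to show that
\[
  t=(t_1,\ldots,t_N) \;\longmapsto\; V_j\bigl(\conv\{y_1+t_1\theta,\ldots,y_N+t_N\theta\}+rB\bigr)
\]
is even and quasi-convex on $\R^N$. To this end, introduce the compact convex set in $\R^n \times \R^N$ given by
\[
  \mathcal{C} \;=\; \conv\{(y_1,e_1),\ldots,(y_N,e_N)\} + \bigl(rB \times \{0_{\R^N}\}\bigr),
\]
where $e_1,\ldots,e_N$ is the standard basis of $\R^N$. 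With $P_t(x,y) = x + \langle y,t\rangle\theta$ as in Proposition \ref{prop:shadow_one}, linearity of $P_t$ together with the computations $P_t(y_i,e_i)=y_i+t_i\theta$ and $P_t(b,0)=b$ gives
\[
  P_t \mathcal{C} \;=\; \conv\{y_1+t_1\theta,\ldots,y_N+t_N\theta\} + rB.
\]

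Quasi-convexity in $t$ is then immediate from Proposition \ref{prop:shadow_one}, which asserts that $t\mapsto V_j(P_t\mathcal{C})$ is convex. For evenness, let $R_{\theta}$ denote reflection through $\theta^{\perp}$ in $\R^n$. Since each $y_i$ lies in $\theta^{\perp}$, we have $R_{\theta}(y_i+t_i\theta)=y_i-t_i\theta$, and $rB$ is $R_{\theta}$-invariant, hence $R_{\theta}(P_t\mathcal{C}) = P_{-t}\mathcal{C}$; the rigid-motion invariance of $V_j$ then yields $V_j(P_{-t}\mathcal{C}) = V_j(P_t\mathcal{C})$. The only mildly delicate point is choosing the lifting $\mathcal{C}$ so that the fixed $r$-neighborhood (which does not depend on the varying vertices) is encoded in the $\R^n$-factor as $rB\times\{0\}$, while the vertices $y_i$ enter through distinct directions in the $\R^N$-factor so that $P_t$ moves them along $\theta$ by the desired amounts $t_i$; once this is arranged, Shephard's convexity theorem for shadow systems as recorded in Proposition \ref{prop:shadow_one} does the rest.
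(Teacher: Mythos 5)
Your proof is correct and is exactly the argument the paper sketches: it uses the identity $\conv\bigl(\bigcup_i B(x_i,r)\bigr)=\conv\{x_1,\ldots,x_N\}+rB$ and then applies the projection argument of Corollary \ref{cor:shadow_one} via Proposition \ref{prop:shadow_one}. Your contribution is to spell out the lifted set $\mathcal{C}$ (with the extra summand $rB\times\{0\}$ to encode the fixed $r$-neighborhood) and to verify the evenness by reflection; both steps are right and are precisely what the paper's one-line proof leaves implicit.
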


\begin{proof}Since
\begin{equation*}  
  \mathop{\rm conv}\left(\bigcup_{i=1}^N B(x_i,r)\right) =
  \conv\{x_1,\ldots,x_N\}+B(0,r),
  \end{equation*}
we can apply the same projection argument as in the proof of Corollary
\ref{cor:shadow_one}; see also work of Pfiefer \cite{Pfiefer} for a
direct argument, extending Groemer's approach \cite{Groemer}.
\end{proof}

\subsection{Operator norms}

Steiner convexity is also present for operator norms from an arbitrary
normed space into $\ell_2^n$.

\begin{proposition}
  \label{prop:operator}
  Let $E$ be an $N$-dimensional normed space.  For $x_1,\ldots,x_N\in
  \R^n$, let ${\bf X} = [x_1,\ldots,x_N]$.  Then the operator norm
  \begin{equation}
    \label{eqn:operator_norm}
    (\R^{n})^N \ni {\bf X} \mapsto \norm{{\bf X}:E\rightarrow \ell_2^n}
  \end{equation}is Steiner convex.
\end{proposition}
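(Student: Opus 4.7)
The plan is to unpack the definition of Steiner convexity directly and reduce to a pointwise computation using the orthogonal decomposition along $\theta$ and $\theta^{\perp}$. Fix $\theta\in S^{n-1}$ and $y_1,\ldots,y_N\in\theta^{\perp}$, and let $G(\mathbf{X})=\norm{\mathbf{X}:E\to\ell_2^n}$. I must show that
\begin{equation*}
  g(t_1,\ldots,t_N):=G([y_1+t_1\theta,\ldots,y_N+t_N\theta])
\end{equation*}
is even and quasi-convex on $\R^N$; I will in fact establish convexity, which is stronger.

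First I would write the operator norm as a supremum,
\begin{equation*}
  g(t)=\sup_{\norm{c}_E\le 1}\Bigl\|\sum_{i=1}^N c_i(y_i+t_i\theta)\Bigr\|_2.
\end{equation*}
Since $\sum_i c_i y_i\in\theta^{\perp}$ is orthogonal to $(\sum_i c_i t_i)\theta$, the Pythagorean identity gives
\begin{equation*}
  \Bigl\|\sum_i c_i(y_i+t_i\theta)\Bigr\|_2=\sqrt{a(c)^2+\langle c,t\rangle^2},\qquad a(c):=\Bigl\|\sum_i c_i y_i\Bigr\|_2.
\end{equation*}
Thus $g(t)=\sup_{\norm{c}_E\le 1}\varphi_c(t)$ with $\varphi_c(t):=\sqrt{a(c)^2+\langle c,t\rangle^2}$.

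For each fixed $c$, the scalar map $s\mapsto\sqrt{a(c)^2+s^2}$ is convex and even on $\R$, and $t\mapsto\langle c,t\rangle$ is linear on $\R^N$; hence $\varphi_c$ is convex in $t$, and $\varphi_c(-t)=\varphi_c(t)$. The supremum $g$ of a family of convex functions is convex, and the supremum of a family of even functions is even. In particular $g$ is quasi-convex and even, which is the Steiner convexity condition.

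Evenness also admits a direct geometric explanation that I would mention as confirmation: passing from $t$ to $-t$ replaces each column $y_i+t_i\theta$ by its reflection $y_i-t_i\theta$ across $\theta^{\perp}$, so the new operator equals $R_\theta\circ\mathbf{X}$ where $R_\theta$ is the orthogonal reflection through $\theta^{\perp}$; since $R_\theta$ is an isometry of $\ell_2^n$, the operator norm is unchanged. There is no serious obstacle in this argument—the entire content is the orthogonal decomposition along $\theta$, after which convexity in $t$ is immediate from the convexity of $s\mapsto\sqrt{a^2+s^2}$ and is preserved under taking suprema.
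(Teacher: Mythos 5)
Your proof is correct and takes essentially the same route as the paper: the paper observes directly that $G$ is convex on $(\R^n)^N$ as an operator norm (hence convex on every affine line), while you unpack this same fact by writing $g$ as a supremum of the explicitly convex functions $\varphi_c(t)=\sqrt{a(c)^2+\langle c,t\rangle^2}$. The evenness argument via the Pythagorean decomposition along $\theta$ and $\theta^\perp$ (equivalently, reflection across $\theta^\perp$ being an $\ell_2^n$-isometry) is identical to the paper's.
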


\begin{proof}
  Denote the map in \eqref{eqn:operator_norm} by $G$. Then $G$ is
  convex and hence the restriction to any line is convex. In
  particular, if $z \in S^{n-1}$ and $y_1,\ldots,y_N \in z^{\perp}$,
  then the function $G_Y:\R^N\rightarrow \R^{+}$ defined by
    \begin{equation*}
      G_Y(t_1,\ldots,t_N)=G(y_1+t_1z_1,\ldots,y_N+t_Nz_N)
    \end{equation*} is convex.
    To show that $G_Y$ is even, we use the fact that
    $y_1,\ldots,y_N\in z^{\perp}$ to get for any $\lambda\in \R^N$, 
    \begin{eqnarray*}
      \norm{\sum\lambda_i (y_i+t_iz) }_2^2 & = &
      \norm{\sum\lambda_i (y_i-t_iz) }_2^2,
    \end{eqnarray*}hence $G_Y(t)=G_Y(-t)$.
\end{proof}

\section{Stochastic forms of isoperimetric inequalities}
\label{section:iso}

We now have all the tools to prove the randomized inequalities
mentioned in the introduction and others.  We will first prove two
general theorems on stochastic dominance and then show how these imply
a variety of randomized inequalities.  At the end of the section, we
discuss some examples of a different flavor.

For the next two theorems, we assume we have the following sequences
of {\it independent} random vectors defined on a common probability space
$(\Omega, \mathcal{F}, \mathbb{P})$; recall that $\wtd{B} =
\omega_n^{-1/n}B$.

\begin{itemize}
  \item[1.] $X_1,X_2,\ldots$, sampled according to densities
    $f_1,f_2,\ldots$ on $\R^n$, respectively (which will be chosen
    accordingly to the functional under consideration).
  \item[2.] $X^*_1,X^*_2,\ldots$, sampled according to
    $f^*_1,f^*_2,\ldots$, respectively.
  \item[3.] $Z_1,Z_2\ldots$ sampled uniformly in $\wtd{B}$.
\end{itemize}

We use ${\bf X}$ to denote the $n\times N$ random matrix ${\bf
  X}=[X_1\ldots X_N]$. Similarly, ${\bf X^*}=[X_1^*\ldots X_N^*]$ and
${\bf Z} =[Z_1\ldots Z_N]$.

\begin{theorem} 
  \label{thm:main_1}
Let $C$ be a compact convex set in $\R^N$ and $1\ls j\ls n$.  Then for
each $\alpha\gr 0$,
\begin{equation}
  \label{eqn:main_1_a}
  \mathbb{P}(V_j({\bf X}C) > \alpha) \gr \mathbb{P}( V_j({\bf X^*}C)> \alpha).
\end{equation}
Moreover, if $C$ is $1$-unconditional and $\norm{f_i}_{\infty}\ls 1$
for $i=1,\ldots,N$, then for each $\alpha\gr 0$,
\begin{equation}
  \label{eqn:main_1_b}
  \mathbb{P}(V_j({\bf X}C) > \alpha) \gr \mathbb{P}( V_j({\bf Z}C)> \alpha).
\end{equation}
\end{theorem}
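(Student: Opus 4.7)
The plan is to derive both inequalities directly from the Steiner convexity machinery of Section~\ref{section:Steiner} together with the extremality results of Section~\ref{section:dominance}. More precisely, inequality \eqref{eqn:main_1_a} will be an immediate application of Corollary~\ref{cor-multi-BLL} once the relevant Steiner convexity is in place, and inequality \eqref{eqn:main_1_b} will then be obtained from \eqref{eqn:main_1_a} by passing from $\mathbf{X}^*$ to $\mathbf{Z}$ using Proposition~\ref{pass-ball-1} applied to a suitable complementary indicator. No genuinely new estimate should be required; the work is to identify the correct monotonicity/symmetry profile of the function $(x_1,\ldots,x_N)\mapsto V_j([x_1,\ldots,x_N]C)$ and to feed it into the right black box.

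For \eqref{eqn:main_1_a}, I would set $G(x_1,\ldots,x_N)=V_j([x_1,\ldots,x_N]C)$. By Corollary~\ref{cor:shadow_one}, $G$ is Steiner convex on $(\R^n)^N$. Let $\nu$ be the product measure with density $\prod_{i=1}^N f_i$ and $\nu^{\ast}$ the product measure with density $\prod_{i=1}^N f_i^{\ast}$; note that $(f_i^{\ast})$ are indeed the symmetric decreasing rearrangements appearing in Corollary~\ref{cor-multi-BLL}. Then the distributional form \eqref{cor-Multi-BLL-2} gives $\nu(\{G>\alpha\})\gr\nu^{\ast}(\{G>\alpha\})$, which is exactly \eqref{eqn:main_1_a}.

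For \eqref{eqn:main_1_b}, I would use the additional information in Corollary~\ref{cor:shadow_one} that, when $C$ is $1$-unconditional, $G$ is coordinate-wise increasing in the sense that $G(s_1x_1,\ldots,s_Nx_N)\ls G(t_1x_1,\ldots,t_Nx_N)$ whenever $0\ls s_i\ls t_i$. Consequently, the complementary indicator $F:=\mathbf{1}_{\{G\ls\alpha\}}$ is coordinate-wise decreasing in the sense of \eqref{coord-wise-decr}. Each $f_i^{\ast}$ is a rotationally invariant density with $\|f_i^{\ast}\|_{\infty}=\|f_i\|_{\infty}\ls 1$, so Proposition~\ref{pass-ball-1} applies to the densities $g_i=f_i^{\ast}$ and the function $F$, yielding
\begin{equation*}
\int F(x_1,\ldots,x_N)\prod_{i=1}^N f_i^{\ast}(x_i)\,dx \;\ls\; \int F(x_1,\ldots,x_N)\prod_{i=1}^N \mathbf{1}_{\wtd{B}}(x_i)\,dx,
\end{equation*}
i.e.\ $\mathbb{P}(V_j(\mathbf{X}^{\ast}C)\ls\alpha)\ls\mathbb{P}(V_j(\mathbf{Z}C)\ls\alpha)$, equivalently $\mathbb{P}(V_j(\mathbf{X}^{\ast}C)>\alpha)\gr\mathbb{P}(V_j(\mathbf{Z}C)>\alpha)$. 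Chaining this with \eqref{eqn:main_1_a} gives \eqref{eqn:main_1_b}.

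The only subtle point is the direction of monotonicity: since $G$ is coordinate-wise \emph{increasing}, Proposition~\ref{pass-ball-1} cannot be applied to $\mathbf{1}_{\{G>\alpha\}}$ directly, which is why I would pivot to the complement $\mathbf{1}_{\{G\ls\alpha\}}$ (a bounded, coordinate-wise decreasing, nonnegative function) and convert back at the end. Beyond this routine bookkeeping there is no real obstacle, since all the heavy lifting, namely the Steiner convexity of $V_j$ of a shadow-system family and the ``pass to the uniform ball'' principle, has already been carried out in Corollary~\ref{cor:shadow_one}, Corollary~\ref{cor-multi-BLL}, and Proposition~\ref{pass-ball-1}.
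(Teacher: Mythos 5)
Your proposal is correct and follows essentially the same route as the paper: Corollary~\ref{cor:shadow_one} supplies Steiner convexity of $(x_1,\ldots,x_N)\mapsto V_j([x_1,\ldots,x_N]C)$, which feeds into \eqref{cor-Multi-BLL-2} for \eqref{eqn:main_1_a}, and then Proposition~\ref{pass-ball-1} (applied to the rotationally invariant densities $f_i^{\ast}$ and the coordinate-wise decreasing indicator $\mathbf{1}_{\{G\ls\alpha\}}$) gives the passage from $\mathbf{X}^{\ast}$ to $\mathbf{Z}$. The only difference is that you have usefully spelled out the bookkeeping—working with the complementary sub-level indicator because $G$ is coordinate-wise \emph{increasing}—which the paper's terse proof leaves implicit.
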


  \begin{proof} 
    By Corollary \ref{cor:shadow_one}, we have Steiner convexity. Thus
    we may apply Corollary \ref{cor-multi-BLL} to obtain
    \eqref{eqn:main_1_a}.  If $C$ is unconditional, then Proposition
    \ref{pass-ball-1} applies so we can conclude \eqref{eqn:main_1_b}.
  \end{proof}

\begin{theorem} 
  \label{thm:main_2}
  Let $C$ be an origin symmetric convex body in $\R^N$.  Let $\nu$ be
  a radial measure on $\R^n$ with a density $\psi$ which is
  $-1/(n+1)$-concave on $\R^n$.  Then for each $\alpha\gr 0$,
  \begin{equation}
    \label{eqn:main_2_a}
    \mathbb{P}(\nu(({\bf X}C)^{\circ}) >\alpha) \ls \mathbb{P}(\nu({\bf
      X^*}C)^{\circ}> \alpha).
  \end{equation}
  Moreover, if $C$ is $1$-unconditional and $\norm{f_i}_{\infty}\ls 1$
  for $i=1,\ldots,N$, then for each $\alpha\gr  0$,
  \begin{equation}
    \label{eqn:main_2_b}
    \mathbb{P}(\nu(({\bf X}C)^{\circ}) > \alpha) \ls \mathbb{P}(\nu(({\bf
      Z}C)^{\circ})> \alpha).
  \end{equation}

  \begin{proof} 
    By Corollary \ref{cor:shadow_two}, the function is
    Steiner concave. Thus we may apply Corollary \ref{cor-multi-BLL}
    to obtain \eqref{eqn:main_2_a}.  If $C$ is unconditional, then
    Proposition \ref{pass-ball-1} applies so we can conclude
    \eqref{eqn:main_2_b}.
  \end{proof}
\end{theorem}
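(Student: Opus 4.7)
The plan is to mirror the structure of the proof of Theorem \ref{thm:main_1}, replacing the Steiner convex intrinsic-volume functional by the dual-volume functional
$$G(x_1,\ldots,x_N) = \nu\bigl(([x_1,\ldots,x_N]C)^{\circ}\bigr).$$
Corollary \ref{cor:shadow_two}, applied with $r=0$, already tells us exactly what is needed: $G$ is Steiner concave on $(\R^n)^N$, and moreover coordinate-wise decreasing when $C$ is $1$-unconditional. Once this input is in hand, everything else is a formal matching with the machinery of Section \ref{section:dominance}. The genuinely hard work---hidden inside Corollary \ref{cor:shadow_two}---is the polar shadow system argument in the spirit of Campi--Gronchi together with the Busemann theorem for convex measures from \cite{CEFPP}, and this is precisely where the hypothesis that $\psi$ be $-1/(n+1)$-concave enters.

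For part (a), I would first observe that for each fixed $\alpha \gr 0$ the indicator $\mathds{1}_{\{G>\alpha\}}$ inherits Steiner concavity from $G$: along any line parallel to $\theta \in S^{n-1}$ with foot in $\theta^{\perp}$, the restriction of $G$ is even and quasi-concave, so its superlevel set is a symmetric interval and the restriction of $\mathds{1}_{\{G>\alpha\}}$ is the indicator of a symmetric interval. Writing
$$\mathbb{P}\bigl(\nu(({\bf X}C)^{\circ}) > \alpha\bigr) = \int_{(\R^n)^N}\mathds{1}_{\{G>\alpha\}}(x_1,\ldots,x_N)\prod_{i=1}^N f_i(x_i)\,dx_1\cdots dx_N,$$
I would then invoke Corollary \ref{cor-multi-BLL} directly---equivalently, apply \eqref{Multi-BLL-1} with $\mu$ Lebesgue, $\ell=N$, $A$ the identity, and the single Steiner concave factor $\tilde F = \mathds{1}_{\{G>\alpha\}}$---to conclude \eqref{eqn:main_2_a}.

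For part (b), the coordinate-wise monotonicity of $G$ when $C$ is $1$-unconditional transfers to $\mathds{1}_{\{G>\alpha\}}$, since $0\ls s_i\ls t_i$ and $G(t_1x_1,\ldots,t_Nx_N)>\alpha$ force $G(s_1x_1,\ldots,s_Nx_N)>\alpha$. After part (a), the vectors $X_i^*$ have rotationally invariant densities $f_i^*$ with $\|f_i^*\|_\infty \ls \|f_i\|_\infty \ls 1$, so Proposition \ref{pass-ball-1} applies with $F=\mathds{1}_{\{G>\alpha\}}$ and $g_i=f_i^*$ to give
$$\mathbb{P}\bigl(\nu(({\bf X^*}C)^{\circ}) > \alpha\bigr) \ls \mathbb{P}\bigl(\nu(({\bf Z}C)^{\circ}) > \alpha\bigr),$$
and chaining with part (a) yields \eqref{eqn:main_2_b}. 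Beyond leaning on Corollary \ref{cor:shadow_two}, no further geometric obstacle arises; the proof is essentially a dual twin of the proof of Theorem \ref{thm:main_1}, with Steiner concavity and reversed set-inclusions replacing Steiner convexity.
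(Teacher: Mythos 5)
Your proposal is correct and follows essentially the same route as the paper: it invokes Corollary \ref{cor:shadow_two} (with $r=0$) for Steiner concavity and coordinate-wise monotonicity, then applies Corollary \ref{cor-multi-BLL} for part (a) and Proposition \ref{pass-ball-1} for part (b). The only difference is that you spell out the two steps the paper leaves implicit—that $\mathds{1}_{\{G>\alpha\}}$ inherits Steiner concavity and coordinate-wise monotonicity from $G$, and that part (b) requires chaining part (a) (passing from $f_i$ to the rotationally invariant $f_i^*$) before Proposition \ref{pass-ball-1} can be applied—which is a useful clarification but not a different argument.
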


We start by explicitly stating some of the results mentioned in the
introduction. We will first derive consequences for points sampled in
convex bodies or compact sets $K\subseteq \R^n$.  In this case, we
have immediate distributional inequalities as
$(\frac{1}{V_n(K)}\mathds{1}_{K})^* =
\frac{1}{V_n(r_KB)}\mathds{1}_{r_KB}$, even without the
unconditionality assumption on $C$. The case of compact sets deserves
special mention for comparison to classical inequalities.  \\

{\noindent \bf 1. Busemann random simplex inequality.} As mentioned
the Busemann random simplex inequality says that if $K\subseteq \R^n$
is a compact set with $V_n(K)>0$ and
$K_{o,n}=\conv\{o,X_1,\ldots,X_n\},$ where $X_1,\ldots,X_n$ are
i.i.d. random vectors with density $f_i=\frac{1}{V_n(K)}\mathds{1}_K$,
then for $p\gr 1$,
\begin{equation}
 \label{eqn:Busemann_body} 
 \EE V_n(K_{o,n})^p \gr \EE V_n((r_KB)_{o,n})^p.
\end{equation}
In our notation, $X_1^*, \ldots,X_n^*$ have density
$\frac{1}{V_n(r_KB)}\mathds{1}_{r_KB}$.  For the set $C =
\conv\{o,e_1,\ldots,e_n\}$, we have $K_{n,o} =
\conv\{o,X_1,\ldots,X_n\}$. Thus the stochastic dominance of Theorem
\ref{thm:main_1} implies \eqref{eqn:Busemann_body} for all $p>0$.\\

{\noindent \bf 2. Groemer's inequality for random polytopes.}  With
the $X_i$'s as in the previous example, set
$K_N=\conv\{X_1,\ldots,X_N\}$. An inequality of Groemer \cite{Groemer}
states that for $p\gr 1$,
\begin{equation}
  \label{eqn:Groemer}
  \EE V_n(K_N)^p \gr \EE V_n((r_KB)_N)^p;
\end{equation}
this was extended by Giannopoulos and Tsolomitis for $p\in (0,1)$ in
\cite{GT_radius}.  Let $C= \conv\{e_1,\ldots,e_N\}$ so that $K_N =
     [X_1,\ldots,X_N]C$ and $(r_KB)_N = [X_1^*,\ldots,X_N^*]C$. Then
     \eqref{eqn:Groemer} follows from Theorem \ref{thm:main_1}.\\

{\noindent \bf 3. Bourgain-Meyer-Milman-Pajor inequality for random
  zonotopes.}  Let $Z_{1,N}(K) =\sum_{i=1}^N [-X_i,X_i]$, with $X_i$
as above. Bourgain, Meyer, Milman, and Pajor \cite{BMMP} proved that
for $p>0$,
\begin{equation}
  \label{eqn:BMMP}
  \EE V_n(Z_{1,N}(K))^p \gr \EE V_n(Z_{1,N}(r_KB))^p.
\end{equation} With the notation of the previous examples, $Z_{1,N}(K) =
[X_1,\ldots,X_N] B_{\infty}^N.$ Thus Theorem \ref{thm:main_1} implies
\eqref{eqn:BMMP}.\\

{\noindent \bf 4. Inequalities for intrinsic volumes.}  For
completeness, we record here how one obtains the stochastic form of
the isoperimetric inequality \eqref{eqn:iso_sd}.  In fact, we state a
stochastic form of the following extended isoperimetric inequality for
convex bodies $K\subseteq \R^n$: for $1\ls j\ls n$,
\begin{equation}
  \label{eqn:ext_iso}
  V_j(K)\gr V_j(r_KB).
\end{equation}
The latter is a consequence of the Alexandrov-Fenchel inequalities,
e.g., \cite{Schneider_book_ed2}.  With $K_N$ as above, a stochastic
form \eqref{eqn:ext_iso} is the following: for $\alpha\gr 0$,
\begin{equation}
  \label{eqn:ext_iso_sd}
\mathbb{P}(V_j(K_{N})> \alpha) \gr \mathbb{P}(V_j((r_KB)_{N})> \alpha),
\end{equation}
which is immediate from Theorem \ref{thm:main_1}. For expectations,
results of this type for intrinsic volumes were proved by Pfiefer
\cite{Pfiefer} and Hartzoulaki and the first named author \cite{HP}.\\

For further information on the previous inequalities and others we
refer the reader to the paper of Campi and Gronchi \cite{CG} and the
references therein. We have singled out these four as particular
examples of $M$-additions (defined in the previous section).  For
example, if $C=\conv\{e_1,\ldots,e_N\}$, we have $$K_{N}
=\oplus_C(\{X_1\},\ldots,\{X_N\}).$$ Similarly, for $C=B_{\infty}^N$,
$$\sum_{i=1}^N[-X_i,X_i]=\oplus_C([-X_1,X_1], \ldots,[-X_N,X_N]).$$ One can also intertwine
the above operations and others.  For example, if
$C=\conv\{e_1,e_1+e_2,e_1+e_2-e_3\}$. Then
\begin{equation*}
[X_1,X_2,X_3]C = \conv\{X_1,X_1+X_2,X_1+X_2-X_3\} 
\end{equation*} and Theorem \ref{thm:main_1} applies 
to such sets as well. The randomized Brunn-Minkowski inequality
\eqref{eqn:BM_sd} is just one example of mixing two operations -
convex hull and Minkowski summation. In the next example, we state a
sample stochastic form of the Brunn-Minkowski inequality for
$M$-addition in which \eqref{eqn:BM_sd} is just a special case; all of
the previous examples also fit in this framework for additional
summands. For other Brunn-Minkowski type inequalities for
$M$-addition, see \cite{GHW_JEMS}, \cite{GHW_Orlicz}.\\

{\noindent \bf 5. Brunn-Minkowski type inequalities.} Let $K$ and $L$
be convex bodies in $\R^n$ and let $M\subseteq \R^2$ be compact,
convex and contained in the positive orthant. Then the following
Brunn-Minkowski type inequality holds for each $1\ls j\ls n$,
\begin{equation}
  \label{eqn:BM_Madd}
  V_j(K\oplus_M L) \gr V_j(r_KB \oplus_M r_LB).
\end{equation}
We first state a stochastic form of the latter. Let $K_{N_1} =
\conv\{X_1,\ldots,X_{N_1}\},$ where $X_1,\ldots,X_{N_1}$ have density
$f_i=\frac{1}{V_n(K)}\mathds{1}_K$; similarly, we define
$L_{N_2}=\conv\{X_{N_1+1},\ldots,X_{N_1+N_2}\},$ where $X_{N_1+1}$,
$\ldots$, $X_{N_1+N_2}$ have density
$f_i=\frac{1}{V_n(L)}\mathds{1}_{L}$.  Then for $\alpha >0$,
\begin{equation}
  \label{eqn:BM_Madd_sd} \mathbb{P}\left(V_j(K_{N_1} \oplus_M
  L_{N_2})> \alpha \right)\gr \mathbb{P}\left(V_j((r_K B)_{N_1} \oplus_M
  (r_LB)_{N_2})> \alpha \right).
\end{equation} 
To see that \eqref{eqn:BM_Madd_sd} holds, set
\begin{equation*}
C_1=\conv\{e_1,\ldots,e_{N_1}\}, \quad
C_2=\conv\{e_1,\ldots,e_{N_2}\}.
\end{equation*}
Identifying $C_1$ with $C'_1=\conv\{e_1,\ldots,e_{N_1}\}$ in
$\R^{N_1+N_2}$ and similarly $C_2$ with $C_2' =
\conv\{e_{N_1+1},\ldots,e_{N_1+N_2}\}$ in $\R^{N_1+N_2}$ as in \S
\ref{sub:Madd}, we have
\begin{eqnarray*}
   K_{N_1}\oplus_M L_{N_2} & = & [X_1,\ldots,X_{N_1}]C_1
     \oplus_M [X_{N_1+1},\ldots,X_{N_1+N_2}]C_2 \\ & = &
           [X_1,\ldots,X_{N_1},X_{N_1+1},\ldots,X_{N_1+N_2}](C'_1\oplus_M C'_2).
\end{eqnarray*}
Write ${\bf X_1} = [X_1,\ldots,X_{N_1}]$ and ${\bf
  X_2}=[X_{N_1+1},\ldots,X_{N_1+N_2}]$, and ${\bf X_1^*} =
[X_1^*,\ldots,X_{N_1}^*]$ and ${\bf
  X_2^*}=[X_{N_1+1}^*,\ldots,X_{N_1+N_2}^*]$.  In block matrix form,
we have
\begin{eqnarray*}
  K_{N_1}\oplus_M L_{N_2} & = & [{\bf X_1},\; {\bf X_2}](C_1'
  \oplus_M C_2').
\end{eqnarray*}
Similarly,
\begin{eqnarray*}
  (r_KB)_{N_1}\oplus_M (r_LB)_{N_2} = [{\bf X_1^*},\; {\bf
      X_2^*}](C_1' \oplus_M C_2'),
\end{eqnarray*} 
and so Theorem \ref{thm:main_1} implies \eqref{eqn:BM_Madd_sd}.  To
prove \eqref{eqn:BM_sd}, we take $M=\{(1,1)\}$ and $j=n$ in
\eqref{eqn:BM_Madd_sd}.  Inequality \eqref{eqn:BM_Madd} follows from
\eqref{eqn:BM_Madd_sd} when $N_1,N_2\rightarrow \infty$. For
simplicity of notation, we have stated this for only two sets and
$C_1, C_2$ as above.  \\

For another example involving a law of large numbers, we turn to the
following, stated in the symmetric case for simplicity.\\

{\noindent \bf 6. Orlicz-Busemann-Petty centroid inequality.} Let
$\psi:[0,\infty)\rightarrow [0,\infty)$ be a Young function, i.e.,
    convex, strictly increasing with $\psi(0)=0$. Let $f$ be a bounded
    probability density of a continuous distribution on $\R^n$. Define
    the Orlicz-centroid body $Z_{\psi}(f)$ associated to $\psi$ by its
    support function
  \begin{equation*}
    h(Z_{\psi}(f),y)=\inf\left\{
      \lambda >0:
      \int_{\R^n}\psi\left(
        \frac{\abs{\langle x, y \rangle}}{\lambda}
      \right)f(x) dx 
      \ls 1 \right\}.
  \end{equation*}    
  Let $r_f>0$ be such that $\norm{f}_{\infty} \mathds{1}_{r_f B}$ is a
  probability density. Then
  \begin{equation}
    \label{eqn:Orlicz_centroid}
   V_n(Z_{\psi}(f))\gr V_n(Z_{\psi}(\norm{f}_{\infty} \mathds{1}_{r_f
     B}).
 \end{equation}
Here we assume that $h(Z_{\psi}(f),y)$ is finite for each $y\in
S^{n-1}$ and so $h(Z_{\psi}(f),\cdot)$ defines a norm and hence is the
support function of the symmetric convex body $Z_{\psi}(f)$. When $f$
is the indicator of a convex body, \eqref{eqn:Orlicz_centroid} was
proved by Lutwak, Yang and Zhang \cite{LYZ_Orlicz} (where it was also
studied for more general functions $\psi$); it was extended to star
bodies by Zhu \cite{Zhu}; the version for probability densities and
the randomized version below is from \cite{PaoPiv_probtake}; an
extension of \eqref{eqn:Orlicz_centroid} to the asymmetric case was
carried out by Huang and He \cite{HuangHe}.

The empirical analogue of \eqref{eqn:Orlicz_centroid} arises by
considering the following finite-dimensional origin-symmetric Orlicz
balls
\begin{equation*}
  B_{\psi, N}:=\left\{ t=(t_1,\ldots,t_N)\in \R^N:
  \frac{1}{N}\sum_{i=1}^N\psi(\abs{t_i})\ls 1\right\}
\end{equation*}with associated Orlicz norm
$\norm{t}_{B_{\psi/N}}:=\inf\{\lambda >0: t\in \lambda B_{\psi, N}\},$
which is the support function for $B_{\psi, N}^{\circ}$. For
independent random vectors $X_1,\ldots,X_N$ distributed according to
$f$, we let
$$Z_{\psi, N}(f) = [X_1,\ldots, X_N]B_{\psi, N}^{\circ}.$$ Then for
$y\in S^{n-1}$,
\begin{equation*}
   h(Z_{\psi, N}(f), y) = \norm{(\langle X_1,y\rangle,\ldots,\langle
     X_N,y\rangle)}_{B_{\psi/N}}.
\end{equation*}
Applying Theorem \ref{thm:main_1} for $C= B^{\circ}_{\psi,N}$, we get that for
$1\ls j \ls n$ and $\alpha\gr 0$,
\begin{equation}
  \label{eqn:Orlicz_centroid_emp}
  \mathbb{P}(V_j(Z_{\psi,N}(f))> \alpha)\gr
  \mathbb{P}(V_j(Z_{\psi,N}(\norm{f}_{\infty}\mathds{1}_{r_f B}))> \alpha).
\end{equation}
Using the law of large numbers, one may check that 
\begin{equation}
  \label{eqn:psi_converge}
  Z_{\psi,N}(f) \rightarrow Z_{\psi}(f)
\end{equation}
almost surely in the Hausdorff metric (see \cite{PaoPiv_probtake});
when $\psi(x) =x^p$ and $f=\frac{1}{V_n(K)}\mathds{1}_K$,
$Z_{\psi,N}(f) = Z_{p,N}(K)$ as defined in the introduction; in this
case, the convergence in \eqref{eqn:psi_converge} is immediate by the
classical strong law of large numbers (compare \eqref{eqn:Zp} and
\eqref{eqn:Zp_empirical}). By integrating
\eqref{eqn:Orlicz_centroid_emp} and sending $N\rightarrow \infty$, we
thus obtain \eqref{eqn:Orlicz_centroid}. \\

We now turn to the dual setting.\\

{\noindent \bf 7.  Blaschke-Santal\'{o} type inequalities.} The
Blaschke-Santal\'{o} inequality states that if $K$ is a symmetric
convex body in $\R^n$, then \begin{equation}
  \label{eqn:BS_body}
  V_n(K^{\circ})\ls V_n((r_KB)^{\circ}).
\end{equation}
This was proved by Blaschke for $n=2,3$ and in general by Santal\'{o}
\cite{Santalo}; see also Meyer and Pajor's proof by Steiner
symmetrization \cite{MeyerPajor} and \cite{Schneider_book_ed2},
\cite{Gardner_book} for further background; origin symmetry in
\eqref{eqn:BS_body} is not needed but we discuss the randomized
version only in the symmetric case.  One can obtain companion results
for all of the inequalities mentioned so far with suitable choices of
symmetric convex bodies $C$.  Let $\nu$ be a radially decreasing
measure as in Theorem \ref{thm:main_2}. Let $C= B_1^N$ and set
$K_{N,s} =[X_1,\ldots,X_N] B_1^N$, where $X_i$ has density
$f_i=\frac{1}{V_n(K)}\mathds{1}_{K}$. Then for $\alpha >0$,
\begin{equation*}
   \mathbb{P}(\nu((K_{N,s})^{\circ})> \alpha) 
   \ls \mathbb{P}(\nu(((r_KB)_{N,s})^{\circ})> \alpha).
\end{equation*}
Similarly, if $K$ and $L$ are origin-symmetric convex bodies and
$M\subseteq \R^2$ is unconditional, then for $\alpha >0$,
\begin{equation}
  \mathbb{P}(\nu((K_{N_1,s}\oplus_M L_{N_1,s})^{\circ})> \alpha)\ls
  \mathbb{P}(\nu(((r_KB)_{N_1,s}\oplus_M (r_LB)_{N_1,s})^{\circ})> \alpha).
\end{equation}

We also single out the polar dual of the last example on
Orlicz-Busemann-Petty centroid bodies.  Let $\psi$ and $B_{\psi,N}$ be
as above. Then
\begin{equation*}
  \mathbb{P}(\nu(Z^{\circ}_{\psi,N}(f)) > \alpha) \ls
  \mathbb{P}(\nu(Z^{\circ}_{\psi,N}(\norm{f}_{\infty}\mathds{1}_{r_f
    B})) > \alpha).
\end{equation*}
For a particular choice of $\psi$ we arrive at the following example,
which has not appeared in the literature before and deserves an
explicit mention. \\

{\noindent \bf 8. Level sets of the logarithmic Laplace transform.}
For a continuous probability distribution with an even bounded density
$f$, recall that the logarithmic Laplace transform is defined by
\begin{equation*}
  \Lambda(f,y) = \log \int_{\R^n} \exp\left(\langle x,y\rangle
  \right)f(x)dx.
\end{equation*}
For such $f$ and $p>0$, we define an origin-symmetric convex body
$\Lambda_{p}(f)$ by
\begin{equation*}
  \Lambda_{p}(f) = \{y\in \R^n: \Lambda_{f}(y)\ls p\}.
\end{equation*}
The empirical analogue is defined as follows: for independent random
vectors $X_1,\ldots,X_N$ with density $f$, set
\begin{equation*}
  \Lambda_{p,N}(f) = \left\{y\in \R^n: \frac{1}{N}\sum_{i=1}^N
  \psi(\abs{\langle X_i, y \rangle})\ls e^p\right\}.
\end{equation*}
If we set $\psi_p(x) = e^{-p}(e^x-1)$ then
$([X_1,\ldots,X_N]B^{\circ}_{\psi_p,N})^{\circ} = \Lambda_{p,N}(f)$.
Then we have the following stochastic dominance
\begin{equation*}
  \mathbb{P}(\nu(\Lambda_{p,N}(f))> \alpha )\ls 
  \mathbb{P}(\nu(\Lambda_{p,N}(\norm{f}_{\infty}\mathds{1}_{r_f B}))> \alpha ),
\end{equation*}where $r_f$ satisfies $\norm{f}_{\infty} \mathds{1}_{r_f B}=1$.
When $N\rightarrow\infty$, we get
\begin{equation*}
  \nu(\Lambda_{p}(f))\ls \nu(\Lambda_{p}(\norm{f}_{\infty}
  \mathds{1}_{r_fB}).
\end{equation*}
The latter follows from the law of large numbers as in \cite[Lemma
  5.4]{PaoPiv_probtake} and the argument given in \cite[\S 5]{CEFPP}.

For $\log$-concave densities, the level sets of the logarithmic
Laplace transform are known to be isomorphic to the duals to the
$L_p$-centroid bodies; see work of Lata{\l}a and Wojtaszczyk
\cite{LW_inf}, or Klartag and E. Milman \cite{KM_unified}; these bodies
are essential in establishing concentration properties of
$\log$-concave measures, e.g., \cite{Paouris_GAFA},
\cite{Klartag_CLT}, \cite{BGVV}.\\

{\noindent \bf 9. Ball-polyhedra.} All of the above inequalities are
volumetric in nature. For convex bodies, they all reduce to
comparisons of bodies of a given volume. For an example of a different
flavor, we have the following inequality involving random ball
polyhedra: for $R>0$,
\begin{equation*}
  \mathbb{P}\left(V_j\left(\bigcap\nolimits_{i=1}^N B(X_i,R)\right)\gr
  \alpha\right) \ls \mathbb{P}\left(V_j\left(\bigcap\nolimits_{i=1}^N
  B(Z_i,R)\right)> \alpha\right).
\end{equation*}
When the $X_i$'s are sampled according to a particular density $f$
associated with a convex body $K$, the latter leads to the following
generalized Urysohn inequality,
\begin{equation*}
  V_j(K)\ls V_j((w(K))/2)B),
\end{equation*}
where $w(K)$ is the mean width of $K$, see \cite{PaoPiv_ball}; the
latter is not a volumetric inequality when $j<n$.  The particular
density $f$ is the uniform measure on a star-shaped set $A(K,R)$
defined by specifying its radial function $\rho_{A(K,R)}(\theta) =
R-h_K(-\theta)$; Steiner symmetrization of $A(K,R)$ preserves the
mean-width of $K$ (for large $R$) so the volumetric techniques here
lead to a stochastic dominance inequality for mean width.\\

We have focused this discussion on stochastic dominance.  It is
sometimes useful to relax the probabilistic formulation and instead
consider the quantities above in terms of bounded integrable
functions. We give one such example. \\

{\bf 10. Functional forms.} The following functional version of Busemann's
random simplex inequality \eqref{eqn:Busemann} is useful for marginal
distributions of high-dimensional probability distributions; this is
from joint work with S. Dann \cite{DPP}. Let $f_1,\ldots,f_k$ be
non-negative, bounded, integrable functions such that $\norm{f_i}_1>0$
for each $i=1,\ldots,k$. For $p\in \R$, set
  \begin{equation*}
    g_{p}(f_1,\ldots,f_k) =
    \int_{\R^n}\cdots\int_{\R^n}V_k(\conv\{0,x_1,\ldots,x_k\})^p
    \prod_{i=1}^k f_i(x_i)dx_1\ldots dx_k.
  \end{equation*}
  Then for $p>0$,
  \begin{equation*}
    g_{p}(f_1,\ldots,f_k) \gr
    \left(\prod_{i=1}^k\frac{\norm{f_i}_1^{1+p/n}}
         {\omega_{n}^{1+p/n}\norm{f_i}_{\infty}^{p/n}}\right)
         g_{p}(\mathds{1}_{B_2^n},\ldots,\mathds{1}_{B_2^n}).
   \end{equation*}
The latter is just a special case of a general functional inequality
\cite{DPP}.  Following Busemann's argument, we obtain the
following. Let $1\ls k\ls n-1$ and let $f$ be a non-negative, bounded
integrable function on $\R^n$.  Then
  \begin{equation*}
    \int_{G_{n,k}}
    \frac{\left(\int_{E}f(x)dx\right)^{n}}{\norm{f\lvert_E}_{\infty}^{n-k}}
    d\nu_{n,k}(E) \ls \frac{\omega_{k}^n}{\omega_n^{k}}
    \left(\int_{\R^n}f(x)dx\right)^k;
  \end{equation*} when $f= \mathds{1}_K$ this recovers the 
  inequality of Busemann and Straus \cite{BusemannStraus} and Grinberg
  \cite{Grinberg} extending \eqref{eqn:Busemann_int}. Schneider proved
  an analogue of the latter on the affine Grassmannian
  \cite{Schneider_flats}, which can also be extended to a sharp
  isoperimetric inequality for integrable functions \cite{DPP}.  The
  functional versions lead to small ball probabilities for projections
  of random vectors that need not have independent coordinates.

\section{An application to operator norms of random matrices}

\label{section:apps}


In the previous section we gave examples of functionals on random convex
sets which are minorized or majorized for the uniform measure on the
Cartesian product of Euclidean balls.  In some cases the associated
distribution function can be accurately estimated.  For example,
passing to complements in \eqref{eqn:main_1_b}, we get for
$\alpha\gr 0$,
\begin{equation}
  \label{eqn:small_DCG}
  \mathbb{P}(V_n({\bf X}C) \ls \alpha) \ls \mathbb{P}( V_n({\bf
    Z}C)\ls \alpha),
\end{equation}where ${\bf X}$ and ${\bf Z}$ are as in Theorem \ref{thm:main_1}.
When $C=B_1^N$, i.e., for random symmetric convex hulls, we have
estimated the quantity on the righthand side of \eqref{eqn:small_DCG}
in \cite{PaoPiv_smallball} for all $\alpha$ less than an absolute
contant (sufficiently small), at least when $N\ls e^n$.  (The reason
for the restriction is that we compute this for Gaussian matrices and
the comparison to the uniform measure on the Cartesian products of
balls is only valid in this range).  This leads to sharp bounds for
small deviation probabilities for the volume of random polytopes that
were known before only for certain sub-gaussian distributions. The
method of \cite{PaoPiv_smallball} applies more broadly.  In this
section we will focus on the case of the operator norm of a random
matrix with independent columns. We refer readers interested in
background on non-asymptotic random matrix theory to the article of
Rudelson and Vershynin \cite{RV_survey} and the references therein.

By combining Corollary \ref{cor-multi-BLL}, and Propositions
\ref{prop:passball_Kanter} and \ref{prop:operator}, we get the
following result, which is joint work with G. Livshyts
\cite{LPP_progress}.

\begin{theorem}
\label{thm:op_norm_dist}
Let $N, n \in \N$. Let $E$ be an $N$-dimensional normed space. Then
the random matrices ${\bf X}, {\bf X}^{\ast}$ and ${\bf Z}$ (as in \S
\ref{section:iso}) satisfy the following for each $\alpha \gr 0$,
\begin{equation}
 \mathbb P \left( \| {\bf X} : E \rightarrow \ell_{2}^{n} \| \ls
 \alpha \right) \ls \mathbb P \left( \| {\bf X}^{\ast} : E
 \rightarrow \ell_{2}^{2} \| \ls \alpha \right).
\end{equation}
Moreover, if $\norm{f_i}_{\infty} \ls 1$ for each $i=1,\ldots,N$,
then
\begin{equation*}
  \mathbb{P}\left( \| {\bf X} : E \rightarrow \ell_{2}^{n} \| \ls \alpha
 \right) \ls \mathbb{P}\left( \| {\bf Z} : E \rightarrow \ell_{2}^{2}
 \| \ls \alpha \right). 
\end{equation*}
\end{theorem}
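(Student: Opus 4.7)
My plan is to combine three ingredients already assembled in the paper: the Steiner convexity of the operator norm (Proposition \ref{prop:operator}), the rearrangement inequality for Steiner convex functions (Corollary \ref{cor-multi-BLL}), and the passage to uniform measures on Cartesian products of Euclidean balls (Proposition \ref{prop:passball_Kanter}). I would write $G({\bf X}) := \|{\bf X} : E \to \ell_2^n\|$, viewed as a non-negative function on $(\R^n)^N$, and establish the two inequalities by applying the three tools to $G$ in turn.

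For the first inequality, I would invoke Proposition \ref{prop:operator} to conclude that $G$ is Steiner convex. Then Corollary \ref{cor-multi-BLL} (specifically \eqref{cor-Multi-BLL-2}), applied with $\nu$ the product measure with density $\prod_{i=1}^N f_i$ and $\nu^\ast$ the product measure with density $\prod_{i=1}^N f_i^\ast$, yields
\begin{equation*}
  \mathbb{P}(G({\bf X}) > \alpha) \gr \mathbb{P}(G({\bf X}^\ast) > \alpha).
\end{equation*}
Taking complements in this distributional inequality gives precisely the first claim of the theorem.

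For the ``moreover'' part, the key observation is that $G$ is a norm on the matrix space $(\R^n)^N$: it is convex and satisfies $G(-{\bf X}) = G({\bf X})$. Consequently the sublevel set $\{G \ls \alpha\}$ is a symmetric convex subset of $\R^{nN}$, so $F := \mathds{1}_{\{G \ls \alpha\}}$ is an even quasi-concave function on $(\R^n)^N$. Under the assumption $\|f_i\|_\infty \ls 1$, each $f_i^\ast$ is a rotationally invariant density with $\|f_i^\ast\|_\infty = \|f_i\|_\infty \ls 1$, so Proposition \ref{prop:passball_Kanter} applied to $F$ and the $f_i^\ast$ gives
\begin{equation*}
  \mathbb{P}(G({\bf X}^\ast) \ls \alpha)
   = \int F \prod_{i=1}^N f_i^\ast
   \ls \int F \prod_{i=1}^N \mathds{1}_{\wtd{B}}
   = \mathbb{P}(G({\bf Z}) \ls \alpha),
\end{equation*}
and chaining this with the first inequality completes the proof.

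I do not anticipate a substantial obstacle: with Proposition \ref{prop:operator} already in hand, the argument reduces to a careful assembly of the preceding machinery. The only point I would verify explicitly is the symmetry of the sublevel set $\{G \ls \alpha\}$ in $\R^{nN}$, which is the form of ``evenness'' required to invoke the peaked-measure formalism underlying Proposition \ref{prop:passball_Kanter} via Lemma \ref{peaked-eq}; this is immediate from $G$ being a norm on the ambient space and does not rely on any unconditional-basis structure of $E$.
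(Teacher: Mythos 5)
Your proposal is correct and follows exactly the route the paper indicates: it combines Proposition \ref{prop:operator} (Steiner convexity of the operator norm), Corollary \ref{cor-multi-BLL} via \eqref{cor-Multi-BLL-2} for the first inequality, and Proposition \ref{prop:passball_Kanter} applied to the even quasi-concave indicator $\mathds{1}_{\{G\ls\alpha\}}$ for the passage to the ball. The one point worth noting is that your write-up is actually cleaner than the stray draft fragment at the end of the source, which misstates the sublevel set as $\{G>\alpha\}$ rather than $\{G\ls\alpha\}$; you correctly use $\{G\ls\alpha\}$, which is the convex symmetric set needed to invoke the peaked-measure machinery.
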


As before, the latter result reduces the small deviation problem to
computations for matrices ${\bf Z}$ with independent columns sampled
in the Euclidean ball of volume one. For the important case of the
operator norm $\norm{\cdot}_{2\rightarrow 2}$, i.e.,
$E:=\ell_{2}^{N}$, we get the following bound.

\begin{lemma}
For $\eps>0$,
\begin{equation}
  \mathbb{P}\left(\norm{\bf Z}_{2\rightarrow 2} \ls \varepsilon \sqrt{N}\right)
  \ls (c\varepsilon )^{nN-1}, 
\end{equation}where $c$ is an absolute constant.
\end{lemma}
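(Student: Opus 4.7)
The plan is to reduce the small-ball event for $\|{\bf Z}\|_{2\to 2}$ to a lower-tail estimate on $\sum_{i=1}^N\|Z_i\|_2^2$ via a Hilbert--Schmidt/operator norm comparison, and then to close the estimate with a Chernoff (Laplace transform) bound whose dimensional constants conveniently cancel.

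First I would use the fact that ${\bf Z}\in\R^{n\times N}$ has at most $n$ nonzero singular values, each bounded by $\sigma_1({\bf Z})=\|{\bf Z}\|_{2\to 2}$. Hence
\begin{equation*}
\sum_{i=1}^N\|Z_i\|_2^2 = \|{\bf Z}\|_F^2 = \sum_j \sigma_j({\bf Z})^2 \ls n\|{\bf Z}\|_{2\to 2}^2,
\end{equation*}
so the event $\{\|{\bf Z}\|_{2\to 2}\ls\varepsilon\sqrt{N}\}$ is contained in $\{\sum_i T_i\ls nN\varepsilon^2\}$, where $T_i:=\|Z_i\|_2^2$. It suffices to bound the lower tail of a sum of i.i.d.\ nonnegative variables.

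Next I would compute the distribution of $T_i$: since $Z_i$ is uniform on $\wtd{B}=\omega_n^{-1/n}B$, one has $\mathbb{P}(T_i\ls t)=\omega_n t^{n/2}$ on $[0,\omega_n^{-2/n}]$, so $T_i$ has density $(n/2)\omega_n t^{n/2-1}$ on that interval. Enlarging the domain of integration to $[0,\infty)$ and evaluating the resulting gamma integral yields, for every $\lambda>0$,
\begin{equation*}
\EE\left[e^{-\lambda T_i}\right]\ls \omega_n\,\Gamma(n/2+1)\,\lambda^{-n/2}=\pi^{n/2}\lambda^{-n/2},
\end{equation*}
where the identity $\omega_n\Gamma(n/2+1)=\pi^{n/2}$ is the step that erases all dimensional dependence from the Laplace transform estimate. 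By independence and Markov's inequality,
\begin{equation*}
\mathbb{P}\!\left(\sum_{i=1}^N T_i\ls s\right)\ls e^{\lambda s}\pi^{nN/2}\lambda^{-nN/2}.
\end{equation*}

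Optimizing at $\lambda=nN/(2s)$ gives $(2\pi e s/(nN))^{nN/2}$, and plugging in $s=nN\varepsilon^2$ produces $(2\pi e\,\varepsilon^2)^{nN/2}=(\sqrt{2\pi e}\,\varepsilon)^{nN}$. To match the stated exponent $nN-1$, I would set $c:=\sqrt{2\pi e}$ and split cases: if $c\varepsilon\gr 1$, then $(c\varepsilon)^{nN-1}\gr 1$ and the inequality is trivial; if $c\varepsilon<1$, then $(c\varepsilon)^{nN}\ls(c\varepsilon)^{nN-1}$ immediately. The only nontrivial ingredient will be the Laplace transform calculation together with the cancellation $\omega_n\Gamma(n/2+1)=\pi^{n/2}$ that removes every trace of $n$ from the final constant; the rest is routine bookkeeping.
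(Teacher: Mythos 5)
Your proof is correct, and it takes a genuinely different route from the paper after the shared initial reduction. Both arguments begin from the Hilbert--Schmidt versus operator-norm comparison $\|{\bf Z}\|_F^2 \ls n\|{\bf Z}\|_{2\to 2}^2$ (in the paper this appears as the volume estimate $C \subseteq \sqrt{n}\,B_2^{nN}$ for the unit ball $C$ of the operator norm). From there the paper treats the $nN$ coordinates as a point in $\R^{nN}$ distributed uniformly in the product $\wtd{B}\times\cdots\times\wtd{B}$ and applies a negative-moment inequality for norms --- $\bigl(\int_K\|x\|_C^{-p}\,dx\bigr)^{1/p}\ls\bigl(\tfrac{d}{d-p}\bigr)^{1/p}V_d(C)^{1/d}$, with $d=nN$ and $p=nN-1$ --- followed by Markov's inequality for $\|{\bf Z}\|_{2\to 2}^{-(nN-1)}$; this is where the exponent $nN-1$ appears naturally (the constraint $p<d$). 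You instead treat $\sum_i\|Z_i\|_2^2$ as a sum of i.i.d.\ nonnegative variables and run a Chernoff argument, exploiting the clean identity $\omega_n\Gamma(n/2+1)=\pi^{n/2}$ to get the dimension-free Laplace transform bound $\EE[e^{-\lambda T_i}]\ls(\pi/\lambda)^{n/2}$. This yields the exponent $nN$ directly and you then weaken to $nN-1$ by cases, so your bound is in fact marginally stronger. The trade-off: the paper's route is shorter given its citation but leans on a result about negative moments of norms, while yours is fully self-contained and elementary, requiring only the gamma integral and Markov's inequality; both produce explicit absolute constants of the same order ($\sqrt{2\pi e}$ versus $ec_1$ with $c_1\asymp\sqrt{2\pi e}$).
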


\begin{proof}
Let $C$ and $K$ be symmetric convex bodies in $\mathbb R^{d}$, $
V_d(K)=1$ and $ p< d $. By \cite[Proposition 4.7]{Paouris_TAMS}), 
\begin{equation}
 \left( \int_{K} \|x\|_{C}^{-p} d x \right)^{\frac{1}{p}} \leq \left(
 \frac{d}{d-p}\right)^{\frac{1}{p}} V_d(C)^{\frac{1}{
     d}}.\end{equation} Let $d:= nN$, $K:= \wtd{B} \times \cdots
\times \wtd{B} \subseteq \mathbb R^{d}$ and $C$ be the unit ball in
$\R^d$ for the operator norm $\norm{\cdot:\ell_2^N\rightarrow
  \ell_2^n}$.  Then the Hilbert-Schmidt norm $\norm{\cdot}_{HS}$
satisfies $\norm{A}_{HS} \leq \sqrt{n} \norm{A}_{2\rightarrow 2}$ or
$C\subseteq \sqrt{n} B_{2}^{d} $, which implies that
$V_d(C)^{\frac{1}{d}} \leq \frac{c_1}{\sqrt{N}}$; in fact, arguing as
in \cite[Lemma 38.5]{NTJ} one can show that $V_d(C)^{\frac{1}{
    d}}\simeq \frac{1}{\sqrt{N}}$. Thus for $ p= nN-1$, we get
\begin{equation*}
  \left(\EE\norm{{\bf Z}}_{2\rightarrow
    2}^{-(nN-1)}\right)^{\frac{1}{nN-1}} \ls c_1(nN)^{\frac{1}{nN-1}}
  N^{-1/2}\ls ec_1 N^{-1/2},
\end{equation*}from which the lemma follows by an application of  Markov's inequality.
\end{proof}

For $1\times N$ matrices, Theorem \ref{thm:op_norm_dist} reduces to
small-ball probabilities for norms of a random vector $x$ in $\R^N$
distributed according to a density of the form $\prod_{i=1}^N f_i$
where each $f_i$ is a density on the real line. In particular, if
$\norm{f_i}_{\infty}\leq 1$ for each $i=1,\ldots,N$, then for any norm
$\norm{\cdot}$ on $\R^N$ (dual to $E$), we have for $\eps>0$,
\begin{equation}
  \mathbb{P}\left(\norm{x}\ls \eps\right) \ls
  \mathbb{P}\left(\norm{z}\ls \eps\right),
\end{equation}where $z$ is a random vector in the cube $[-1/2,1/2]^N$
- the uniform measure on Cartesian products of ``balls'' in
$1$-dimension.  In fact, by approximation from within, the same result
holds if $\norm{\cdot}$ is a semi-norm. Thus if $x$ and $z$ are as
above, for each $\eps>0$ we have
\begin{equation}
  \label{eqn:RV_sb}
  \mathbb{P}(\norm{P_Ex}_2\ls \eps \sqrt{k})\ls
  \mathbb{P}(\norm{P_Ez}_2\ls \eps \sqrt{k}) \ls (2\sqrt{\pi
    e}\eps)^k,
\end{equation} 
where the last inequality uses a result of Ball \cite{Ball_GAFA}.  In
this way we recover the result of Rudelson and Vershynin from
\cite{RV_IMRN}, who proved \eqref{eqn:RV_sb} with a bound of the form
$(c\eps)^k$ for some absolute constant $c$. Using the
Rogers/Brascamp-Lieb-Luttinger inequality and Kanter's theorem, one
can also obtain the sharp constant of $\sqrt{2}$ for the
$\ell_{\infty}$-norm of marginal densities, which was first computed
in \cite{LPP} by adapting Ball's arguments from \cite{Ball_GAFA}.

\subsection*{Acknowledgements}

It is our pleasure to thank Petros Valettas for useful discussions.
We also thank Beatrice-Helen Vritsiou for helpful comments on an
earlier draft of this paper. Part of this work was carried out during
the Oberwolfach workshops {\it Convex Geometry and its Applications},
held December 6-12, 2015, and {\it Asymptotic Geometric Analysis},
held February 21-27, 2016.  We thank the organizers of these meetings
and the institute staff for their warm hospitality.

\bibliographystyle{amsplain} \bibliography{surveybib}

\end{document}